\documentclass[a4paper,final]{amsart}
\usepackage[T1]{fontenc}
\usepackage[utf8]{inputenc}
\usepackage{lmodern}
\usepackage[english]{babel}

\newcommand{\highresticks}{10}
\newcommand{\lowresticks}{2}

\usepackage{cutwin}
\usepackage{longtable}

\usepackage{microtype}
\usepackage{ifdraft}
\usepackage{array}
\usepackage{wrapfig}
\usepackage{floatflt}

\usepackage{amsmath}
\usepackage{amsthm}
\usepackage{amsbsy}
\usepackage{amssymb}
\usepackage{amsrefs}
\usepackage{ifthen}

\usepackage{enumitem}
\usepackage{xkeyval}

\usepackage{hhline}
\usepackage{graphicx}
\usepackage{verbatim} 

\usepackage[l2tabu, orthodox]{nag}
\usepackage[pdfusetitle]{hyperref}

\usepackage{tikz-cd}
\usetikzlibrary{calc,fadings,shapes,arrows,decorations.pathreplacing,backgrounds,positioning}

\makeatletter
\newcommand{\manuallabel}[2]{\def\@currentlabel{#2}\label{#1}}
\makeatother

\allowdisplaybreaks[3]

%\usepackage[all,2cell]{xy}
%\UseComputerModernTips
%\newdir{ >}{{}*!/-5pt/@{>}}

\tikzset{%
   >=latex, % option for nice arrows
   inner sep=0pt,%
   outer sep=2pt,%
   mark coordinate/.style={inner sep=0pt,outer sep=0pt,minimum size=3pt,
     fill=black,circle}%
}

\usepackage{blindtext}

\sloppy
\usepackage{parskip}
\usepackage{ragged2e}
\setlength{\RaggedRightParindent}{\parindent}
\RaggedRight

\usepackage{caption}
\captionsetup{%
   justification=raggedright,
   labelfont=bf,
   width=.85\textwidth,
   singlelinecheck=off
}

\ifdraft{\usepackage[notref,notcite]{showkeys}}{}

% some hints from http://www.olivierverdier.com/posts/2013/07/15/modern-latex/
\usepackage{mathtools}
\DeclarePairedDelimiterX\scprod[2]{\langle}{\rangle}{#1,#2}   % scalar product
\DeclarePairedDelimiter\norm{\vert\mkern-1.7mu\vert}{\vert\mkern-1.7mu\vert}      % vector norm
\DeclarePairedDelimiterX\setof[2]{\lbrace\,}{\,\rbrace}{#1\,\delimsize\vert\,#2}
\mathtoolsset{
showonlyrefs=true % or false in draft mode
}

\theoremstyle{plain}
\newtheorem{thm}{Theorem}
\newtheorem{lemma}[thm]{Lemma}

\newtheorem{coro}[thm]{Corollary}
\theoremstyle{definition}
\newtheorem{example}[thm]{Example}
\newtheorem{definition}[thm]{Definition}

\numberwithin{thm}{section}

\newcommand{\os}{^\ast}
\newcommand{\oriented}{^{or}}

\newcommand{\card}[1]{\vert #1\vert}
\newcommand{\celldim}[1]{\operatorname{cd}_{#1}}
\newcommand{\acell}{{\mathfrak A}}
\newcommand{\rcell}{{\mathfrak R}}
\newcommand{\bcell}{{\mathfrak B}}
\newcommand{\xcell}{{\mathfrak X}}
\newcommand{\RR}{{\mathbb R}}
  % Grassmannian, as a subspace of the orthogonal group
\newcommand{\inversions}[1]{{\operatorname{inv}(#1)}}
\newcommand{\qinversions}[1]{{\widetilde{\operatorname{inv}}(#1)}}

\newcommand{\qrises}[1]{{\widetilde{\operatorname{rises}}(#1)}}
\DeclareMathOperator{\Sl}{Sl}
\DeclareMathOperator{\diag}{diag}
\DeclareMathOperator{\id}{id}
\DeclareMathOperator{\Tr}{Tr}
\DeclareMathOperator{\vspan}{span}

\DeclareMathOperator{\Spec}{Spec}
\DeclareMathOperator{\Grassm}{Grassm}
\DeclareMathOperator{\Invol}{Invol}

\newcommand{\milnor}[1]{\cite{MR0440554}*{#1}}

% http://tex.stackexchange.com/questions/4624/a-symbol-for-the-quotient-of-two-objects
\newcommand{\bigslant}[2]{{\raisebox{.3em}{$#1$}\left/\raisebox{-.3em}{$#2$}\right.}}

\newcommand{\lie}[1]{{\mathfrak{#1}}}

\DeclareMathOperator{\Gl}{Gl}

\DeclareMathOperator{\sign}{sign}

\newcommand{\rotat}[2]{{\mathrm{Rot}}^{#1}_{#2}}

\newcommand{\incid}[2]{J(#1,#2)}
\newcommand{\orid}[2]{o(#1,#2)}
\newcommand{\coveredby}{\mathrel{\triangleleft}}
\newcommand{\incitti}[1]{\cite{MR2106960}*{#1}}

\newcommand{\covtrans}{\mathop{\mathrm{ct}}\nolimits}
\newcommand{\covop}{\mathop{\mathrm{co}}\nolimits}

\newcommand{\involutions}{\mathop{\mathrm{Invol}}\nolimits}
\newcommand{\wtp}[1]{\mathop{\mathrm{tp}}\nolimits_w(#1)}

\begin{document}

\title{The Bruhat decomposition of real Grassmann manifolds}
\author{Christian Nassau}
\email{nassau@nullhomotopie.de}
%\urladdr{http://nullhomotopie.de}
%\address{Christian Nassau \\
%Waiblinger Straße 19\\
%70372 Stuttgart\\
%Germany}
\subjclass[2010]{14M15}
\date{\today}

\begin{abstract}
We study the Grassmann manifold $G_k$ of all $k$-dimensional subspaces of $\RR^n$.
The Cartan embedding $G_k\subset O(n)$ realizes $G_k$ as a subspace of $\Sl_n(\RR)$
and we study the decomposition $G_k=\coprod_w (BwB\cap G_k)$ inherited from the classical Bruhat decomposition.
We prove that this defines a CW structure on $G_k$
and determine the incidence numbers between cells.
%Using this cell decomposition we then compute the cohomology of the adjoint Grassmannians
%$PG_{k} = G_{k}/\sim$ where $V\sim V^\perp$ and $n=2k$.
\end{abstract}

\maketitle

\tableofcontents

%%%%%%%%%%%%%%%%%%%%%%%%%%%%%%%%%%%%%%%%%%%%%%%%%%%%%%%%%%%%%%%%%%%%%%%%%%%%%%%%%%%%%%%%%%%%%%%%%
%%%%%%%%%%%%%%%%%%%%%%%%%%%%%%%%%%%%%%%%%%%%%%%%%%%%%%%%%%%%%%%%%%%%%%%%%%%%%%%%%%%%%%%%%%%%%%%%%
%%%%%%%%%%%%%%%%%%%%%%%%%%%%%%%%%%%%%%%%%%%%%%%%%%%%%%%%%%%%%%%%%%%%%%%%%%%%%%%%%%%%%%%%%%%%%%%%%

\begin{section}{Introduction}
The study of Grassmann manifolds has a long history
and their topology is exceptionally well understood.
The theory of Schubert cells provides the Grassmannian $G_k(\RR^n)$
with a CW-structure, and the incidence relations between the cells
have long been known (see \cite{jungkind}, for example, for a thorough discussion of the real case).
The reader may thus be forgiven to question the necessity of another cell decomposition.

The situation is somewhat different for quotients of Grassmann manifolds. Consider
the space $PG_k(\RR^{2k}) = G_k(\RR^{2k})/{\mathbin\perp}$
where a subspace $V$ is identified with its complement $V^\perp$.
Schubert cells do not provide a CW decomposition of this quotient, since
the pointwise complement of a Schubert cell $X_w$  based on a flag $F_\bullet$
is a Schubert cell with respect to the different flag $F_{n-\bullet}^\perp$.
In fact, the author believes that the cohomology of the $PG_k$ (and
their oriented counterparts $PG_{2k}\oriented$) has not yet been
documented in the literature.

The Bruhat decomposition that we study in this paper, in contrast, 
is compatible with complements. 
%We hope to study the cohomology of the indicated quotient spaces in subsequent work.  
The decomposition has shown up in many places before, but it seems it has
not acquired a standard name, nor have its combinatorial properties been investigated in detail. We can
think of $G_k(\RR^n)$ as a subset of the orthogonal group $O(n)$
by mapping a $V_k\subset\RR^n$ to the reflection $r_V$ at $V$
(Cartan embedding). Let $B^+\subset\Sl_n$ be the Borel group of upper triangular matrices
with positive diagonal. 
We have
$$G_k(\RR^n) = \coprod_{\tilde w} \left( G_k(\RR^n) \cap B^+\tilde wB^+ \right)$$
where $\tilde w$ runs through the set of signed permutation matrices
(actually, only involutions need to be considered).
We call this the {\em Bruhat decomposition} and $\bcell_{\tilde w} = G_k(\RR^n) \cap B^+\tilde wB^+$ a {\em Bruhat cell}.
The reader can find a picture for the case of $\RR P^2$ on page \pageref{fig:rp2example}.

Our study begins in section \ref{sec:cell}
where we will describe a natural coordinate system on each $\bcell_{\tilde w}$.
This will in particular show that each Bruhat cell is indeed a topological cell.

Examples show that these coordinates cannot be extended continuously to the closed cells, however.
Section \ref{sec:attaching} will therefore be dedicated to the construction of
alternative attaching maps which exhibit the $\bcell_{\tilde w}$ as the cells of a CW-decomposition.

In section \ref{sec:incidence} we begin the study of the combinatorial relations
between the $\bcell_{\tilde w}$. 
Our investigation is based on the work of Incitti \cite{MR2106960}
who has already solved the underlying combinatorial problem for unsigned permutations.
Lifting his results into the geometric context involves a rather detailed
study of (an analogue of) Richardson varieties that are well-known, for example from
the literature on Kazhdah-Lusztig varieties.
The upshot is that the incidence numbers between the Bruhat cells are all either $0$, $1$ or $-1$.
The precise results are 
stated in Theorem \ref{l:signdet} and Lemmas \ref{comp:incidmodel}, \ref{comp:incid}, \ref{comp:ormodel}, and
\ref{comp:orid}.

Our proofs (in particular in section \ref{sec:incidence}) are unfortunately quite
computational. 
This paper wouldn't exist without the assistance of 
the Sage computer algebra system \cite{sage} 
(using the Singular \cite{singular} and GAP \cite{GAP4}
libraries under the surface).
We hope to include some of our code in a future revision, once we find the time to clean it up.

\end{section}

%%%%%%%%%%%%%%%%%%%%%%%%%%%%%%%%%%%%%%%%%%%%%%%%%%%%%%%%%%%%%%%%%%%%%%%%%%%%%%%%%%%%%%%%%%%%%%%%%
%%%%%%%%%%%%%%%%%%%%%%%%%%%%%%%%%%%%%%%%%%%%%%%%%%%%%%%%%%%%%%%%%%%%%%%%%%%%%%%%%%%%%%%%%%%%%%%%%
%%%%%%%%%%%%%%%%%%%%%%%%%%%%%%%%%%%%%%%%%%%%%%%%%%%%%%%%%%%%%%%%%%%%%%%%%%%%%%%%%%%%%%%%%%%%%%%%%

\begin{section}{Cell decomposition}\label{sec:cell}
Let $G_k=G_k(\RR^n)$ be the set of $k$-dimensional vector subspaces of $\RR^n$. We have an embedding
$r:G_k\rightarrow O(n)$ that maps a space $V$ to the reflection across $V$.
We will identify $G_k$ with its image
$$r(G_k) = \setof{g\in O(n)}{g^2=\id,\,\Tr g=n-2k}$$
and think of $G_k$ as the space of orthogonal pairs $(V^+,V^-)$ where $V^\pm$ denotes the
$(\pm1)$-eigenspace of $r(V)$.

Let $B\subset\Sl_n(R)$ be the set of upper triangular matrices
and $B^+\subset B$ the subset of matrices with positive diagonal.
Let $W$ be the set of permutation matrices, and $\tilde W = \diag(\pm 1)\cdot W$ the set of signed permutation matrices. 
We then have the well-known Bruhat decompositions
\begin{equation}
\Sl_n(R) = \coprod_{w\in W} BwB = \coprod_{\tilde w\in \tilde W} B^+\tilde wB^+.
\end{equation}

The Grassmannian inherits an induced decomposition
\begin{align}
G_k &= \coprod_{\tilde w\in \tilde W} \left( G_k \cap B^+\tilde wB^+ \right).
\end{align}
We call $\bcell_{\tilde w} = G_k \cap B^+\tilde wB^+$
the \emph{Bruhat-cell} associated to the signed permutation $\tilde w$.

\begin{lemma}
$\bcell_{\tilde w}$ is empty unless $\tilde w\tilde w=1$.
\end{lemma}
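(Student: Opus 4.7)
The plan is to exploit the fact that every element of $G_k$ (identified via the Cartan embedding) is an involution, and combine this with the uniqueness of the Bruhat decomposition.

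First I would take $g \in \bcell_{\tilde w}$ and write $g = b_1 \tilde w b_2$ with $b_1, b_2 \in B^+$. Since $g$ lies in the image of $r$, we have $g^2 = \id$, hence $g = g^{-1}$. Now compute the inverse: $g^{-1} = b_2^{-1} \tilde w^{-1} b_1^{-1}$. Because $B^+$ is a group, $b_2^{-1}, b_1^{-1} \in B^+$, so this exhibits $g^{-1}$ as an element of $B^+ \tilde w^{-1} B^+$.

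Thus $g$ lies simultaneously in $B^+ \tilde w B^+$ and in $B^+ \tilde w^{-1} B^+$. By the Bruhat decomposition $\Sl_n(\RR) = \coprod_{\tilde u \in \tilde W} B^+ \tilde u B^+$ recalled above, these double cosets are disjoint unless $\tilde w = \tilde w^{-1}$, i.e.\ $\tilde w \tilde w = 1$. Therefore non-emptiness of $\bcell_{\tilde w}$ forces $\tilde w$ to be an involution.

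The argument is essentially a one-line observation; the only thing to be careful about is that the uniqueness statement applies to $\tilde W$ (signed permutations with the $B^+$-refinement), not only to the classical $W$-indexed decomposition, but this is exactly the second equality recalled in the displayed Bruhat decomposition above. No further obstacle is expected.
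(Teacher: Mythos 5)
Your argument is correct and is exactly the paper's proof, just written out in more detail: both use $g=g^{-1}$ together with $(B^+\tilde wB^+)^{-1}=B^+\tilde w^{-1}B^+$ and the disjointness of the signed Bruhat decomposition to force $\tilde w=\tilde w^{-1}$. Nothing is missing.
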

\begin{proof}
Since $g=g^{-1}$ for every $g\in r(G_k)$ this follows from
$(B^+\tilde wB^+)^{-1} = B^+{\tilde w^{-1}}B^+$.
\end{proof}

Conversely, if $\tilde w\tilde w=1$ then $\tilde w$ is itself an element of $\bcell_{\tilde w}$,
so all such $\bcell_{\tilde w}$ are non-empty. 
We refer to $\tilde w$ as the \emph{center} of $\bcell_{\tilde w}$.

Henceforth $\tilde w\tilde w=1$ will be implicitly assumed.
For such a signed involution $\tilde w$ we let $w$ denote the underlying permutation
and $\epsilon = \diag(\epsilon_i)$ the associated diagonal matrix of signs $\epsilon_i\in\{\pm1\}$. 
We have $\epsilon_i=\epsilon_{w(i)}$ because $\tilde w^2=1$.
Let $e_i$ denote
the standard basis vectors of $\RR^n$. 
We have $\tilde w(e_i) = \epsilon_i e_{w(i)}$.

\begin{lemma}\label{l:alphadef}
$\bcell_{\tilde w} = \setof{\alpha \tilde w\alpha^{-1}}{\alpha\in B^+} \cap O(n)$
\end{lemma}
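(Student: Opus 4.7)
The strategy is to prove the two containments separately, with most of the work going into the non-trivial direction.

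The inclusion $\supseteq$ is immediate: given $\alpha\in B^+$ with $g := \alpha\tilde w\alpha^{-1}\in O(n)$, the factorization $g = \alpha\cdot\tilde w\cdot\alpha^{-1}$ places $g$ in $B^+\tilde w B^+$; furthermore $g^2 = \alpha\tilde w^2\alpha^{-1} = 1$ and $\Tr g = \Tr \tilde w = n-2k$, so $g \in G_k \cap B^+\tilde w B^+ = \bcell_{\tilde w}$.

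For the reverse inclusion, fix $g\in\bcell_{\tilde w}$ and any factorization $g = \beta\tilde w\gamma$ with $\beta,\gamma\in B^+$. The plan is to \emph{symmetrize} this decomposition into the form $g = \alpha\tilde w\alpha^{-1}$. Since $g\in G_k$ forces $g=g^{-1}$, one has $\beta\tilde w\gamma = \gamma^{-1}\tilde w\beta^{-1}$, which rearranges (multiply by $\gamma$ on the left, $\beta$ on the right) to $\mu\tilde w\mu = \tilde w$ for $\mu := \gamma\beta \in B^+$; equivalently, $\tilde w\mu\tilde w^{-1} = \mu^{-1}$.

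The crucial step is to construct a compatible square root $\nu\in B^+$ with $\nu^2 = \mu$ \emph{and} $\tilde w\nu\tilde w^{-1} = \nu^{-1}$. For the first property, I will use the upper-triangular matrix logarithm to define $\nu := \exp(\tfrac12\log\mu)$: since $\mu$ has positive diagonal entries, $\log\mu$ is a well-defined upper-triangular matrix, so $\nu\in B^+$ and $\nu^2=\mu$. For the twisted inversion, I appeal to uniqueness of the principal matrix square root: both $\tilde w\nu\tilde w^{-1}$ and $\nu^{-1}$ square to $\mu^{-1}$ and have only positive real eigenvalues (the former being conjugate to $\nu$, the latter simply $\nu^{-1}$), so by the principal square root theorem they must coincide. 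Taking $\alpha := \beta\nu^{-1}\in B^+$, the relation $\tilde w\nu = \nu^{-1}\tilde w$ yields $\nu^{-1}\tilde w\nu = \nu^{-2}\tilde w = \mu^{-1}\tilde w$, and a direct computation gives
\[ \alpha\tilde w\alpha^{-1} = \beta\,\mu^{-1}\tilde w\,\beta^{-1} = \gamma^{-1}\tilde w\beta^{-1} = g^{-1} = g. \]

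The main technical obstacle is this twisted-square-root construction---producing a single $\nu\in B^+$ realizing both the squaring relation and the $\tilde w$-inversion compatibility. Once $\nu$ is in hand, the rest collapses to routine bookkeeping with the Bruhat factorization.
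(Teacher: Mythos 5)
Your proof is correct. The core idea is the same as the paper's -- symmetrize a Bruhat factorization of $g$ by extracting a square root of the ``defect'' element and folding it into $\alpha$ -- but your implementation is genuinely different and, in fact, leaner. The paper first passes to the refined decomposition $g = u\tilde wDv$ with $u\in U^-$, invokes the \emph{uniqueness} of that refined decomposition to deduce the relations $(\tilde wD)^{-1}=\tilde wD$, $(vu)_-=1$, etc., and then builds the square root in two separate pieces: the binomial series on the unipotent group $U^+\cap(\tilde wD)^{-1}U^+(\tilde wD)$ and the entrywise root $D^{1/2}$ on the diagonal. You instead start from an \emph{arbitrary} factorization $g=\beta\tilde w\gamma$, derive the single relation $\tilde w\mu\tilde w^{-1}=\mu^{-1}$ for $\mu=\gamma\beta$, and take the principal square root $\nu$ of $\mu$ in all of $B^+$ at once; the $\tilde w$-compatibility $\tilde w\nu\tilde w^{-1}=\nu^{-1}$ then comes for free from the uniqueness of the square root with spectrum in the open right half-plane (both candidates square to $\mu^{-1}$ and have positive real eigenvalues, since $\mu\in B^+$ is triangular with positive diagonal). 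This trades the $U^\pm$-machinery and the uniqueness statement for the refined Bruhat decomposition against a standard fact from matrix function theory; what the paper's version buys in exchange is that it stays entirely inside the algebraic-group formalism (binomial square roots in unipotent groups), which generalizes more readily beyond $\Sl_n(\RR)$. Your closing computation $\alpha\tilde w\alpha^{-1}=\beta\mu^{-1}\tilde w\beta^{-1}=\gamma^{-1}\tilde w\beta^{-1}=g$ checks out, as does the easy inclusion (using the standing conventions $\tilde w^2=1$ and $\Tr\tilde w=n-2k$).
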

\begin{proof}
Suppose $g\in \bcell_{\tilde w}$. We have to show that there is an $\alpha\in B^+$ such that
$g=\alpha \tilde w \alpha^{-1}$.
The following line of reasoning seems to be well-known. Our proof is modelled on \cite{MR2278145}*{proof of Theorem 2}.

Let $U=\setof{u\in B^+}{u_{i,i}=1}$ be the unipotent subgroup and
define 
\begin{equation}
U^+ = U\mathbin{\cap} wUw,\quad  U^- = U\mathbin{\cap} wU\os w.
\end{equation}
Here $U\os$ consists of the transposed matrices from $U$.
%The $U^+=U\cap wUw$ is stable under conjugation with $w$. 
Every $x\in U$ has a unique factorization $x=(x_-)\cdot(x_+)$ with $x_\pm\in U^\pm$.

The Bruhat decomposition can be refined to $g=u \tilde wD v$ with $u,v\in U$ and a
diagonal matrix $D\in B^+$ with positive entries. Furthermore, this decomposition becomes unique under
the additional assumption that $u\in U^-$.
 
Consider now
\begin{align}
(\tilde wD)^{-1} &= vg^{-1}u = vgu = (vu) (\tilde wD) (vu) 
\\ &= (vu)_-(vu)_+(\tilde wD)(vu) 
= (vu)_- \cdot (\tilde wD) \cdot \left((vu)_+\right)^{(\tilde wD)} (vu)
\end{align}
%where $vu = (vu)_-(vu)_+$ is the factorization with $(vu)_\pm\in U^\pm$.
Both sides of this equation are Bruhat decompositions that satisfy the uniqueness condition. This allows us to conclude
\begin{equation}
(\tilde wD)^{-1} = \tilde wD,\quad
(vu)_- = 1,\quad
(vu)^{-1} = \left((vu)_+\right)^{(\tilde wD)}.
\end{equation} 
We therefore have $D^{\tilde w}=D^{-1}$, $(vu) = (vu)_+$ and $(vu)\in U^+\cap (\tilde wD)^{-1}U^+(\tilde wD)$.
Since this group is unipotent there is a unique square root $L$ with $L^2 = vu$,
given, for example, by the binomial formula.
By the uniqueness of the square root we also have $L^{-1} = L^{(\tilde wD)}$.

Now let $\alpha = uL^{-1}D^{(-1/2)}$. We then find
\begin{align}
\alpha \tilde w \alpha^{-1} &= uL^{-1}D^{(-1/2)}\tilde wD^{(1/2)} L u^{-1}
= uL^{-1}\tilde w D L u^{-1}
= u \tilde w D L^2 u^{-1}
= u \tilde w D v
\end{align}
as desired.
\end{proof}

For the next Lemma let 
$H_i = \setof{(x_1,\ldots,x_i,0,\ldots,0)}{x_i>0}$.
\begin{lemma}\label{l:normform}
Let $V^\pm$ be the $(\pm1)$-eigenspaces of $g\in\bcell_{\tilde w}$.
Then there are $v_i\in H_i$ such that
\begin{equation}\label{eq:normform}
V^\pm = \vspan\setof{v_i \pm \epsilon_i v_{w(i)}}{i\le w(i)}.
\end{equation}
Conversely, for all such $(v_i)$ the corresponding pair $(V^+,V^-)$ defines an element of $\bcell_{\tilde w}$
as long as $V^+$ and $V^-$ are orthogonal.
\end{lemma}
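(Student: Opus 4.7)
The plan is to bootstrap from Lemma~\ref{l:alphadef}: every $g\in\bcell_{\tilde w}$ can be written $g=\alpha\tilde w\alpha^{-1}$ for some $\alpha\in B^+$, and conjugation by $\alpha$ carries the $(\pm1)$-eigenspaces of $\tilde w$ to those of $g$. The proof thus reduces to (i) writing down the eigenspaces of $\tilde w$ itself and (ii) tracking what $\alpha$ does to them.

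For (i), the identities $\tilde w(e_i)=\epsilon_i e_{w(i)}$ and $\epsilon_i=\epsilon_{w(i)}$ give at once that $e_i\pm\epsilon_i e_{w(i)}$ is a $(\pm1)$-eigenvector of $\tilde w$ when $i<w(i)$, while a fixed point $i=w(i)$ contributes $2e_i$ to the $\epsilon_i$-eigenspace (matching the convention of \eqref{eq:normform}). For the forward direction of the lemma I set $v_i:=\alpha e_i$; since $\alpha\in B^+$ is upper triangular with strictly positive diagonal, $v_i$ vanishes beyond its $i$-th coordinate and has a strictly positive $i$-th coordinate, hence $v_i\in H_i$. Applying $\alpha$ to the eigenvector basis from (i) then produces exactly the spanning sets of \eqref{eq:normform}.

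For the converse, given $(v_i)$ with $v_i\in H_i$ I would let $\alpha$ be the unique upper triangular matrix with $\alpha e_i=v_i$; its diagonal is strictly positive, and after rescaling by the positive scalar $(\det\alpha)^{-1/n}$ (which does not affect the conjugate) I may assume $\alpha\in B^+$. Then $g:=\alpha\tilde w\alpha^{-1}$ lies in $B^+\tilde wB^+$, squares to the identity, has the same trace as $\tilde w$ (namely $n-2k$, as $\bcell_{\tilde w}$ is assumed non-empty), and has $(\pm1)$-eigenspaces equal to the prescribed $V^\pm$ by the computation already made. The only remaining point is that $g\in O(n)$: since $g^2=1$ this reduces to $g$ being symmetric, and since the $v_i$ form a basis of $\RR^n$ so that $V^+\oplus V^-=\RR^n$, symmetry of $g$ is in turn equivalent (via the spectral theorem) to the orthogonality $V^+\perp V^-$ hypothesised in the statement. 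This last equivalence is the one step I expect to require more than direct bookkeeping.
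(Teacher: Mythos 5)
Your argument is correct and follows the paper's own route: both directions are read off from Lemma~\ref{l:alphadef} by setting $v_i=\alpha(e_i)$ and applying $\alpha$ to the eigenspaces $W^\pm=\vspan\setof{e_i\pm\epsilon_i e_{w(i)}}{i\le w(i)}$ of $\tilde w$. Your final observation — that for an involution $g$ with $V^+\oplus V^-=\RR^n$, orthogonality of $g$ is equivalent to $V^+\perp V^-$ — is exactly the point the paper records just after the lemma (there phrased as the condition $A=A^{\tilde w}$ on $A=\alpha\os\alpha$), so no gap remains.
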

\begin{proof}
This is just a restatement of the previous Lemma. Given $\alpha\in B^+$ with $g=\alpha\tilde w\alpha^{-1}$ 
one has $V^\pm = \alpha(W^\pm)$ where 
\begin{equation}
W^\pm = \vspan\setof{e_i \pm \epsilon_i e_{w(i)}}{i\le w(i)}
\end{equation}
are the $(\pm1)$-eigenspaces of $\tilde w$. 
Formula \eqref{eq:normform} results on setting $v_i = \alpha(e_i)$.
\end{proof}

We have now seen that a point of $\bcell_{\tilde w}$
can by specified by giving a certain $\alpha\in B^+$ as in Lemma \ref{l:alphadef},
or equivalently the $v_i=\alpha(e_i)$ as in Lemma \ref{l:normform}.
Such an $\alpha$ is itself determined by the matrix $A=\alpha\os\alpha$ with $A_{i,j} = \scprod{v_i}{v_j}$
- the reconstruction of $\alpha$ from $A$ is known as the \emph{Cholesky decomposition} of $A$.
An $A$ that arises in this way has the following properties:
\newcommand{\eqline}[1]{\makebox[.8\textwidth][l]{#1}}
\begin{align}
&\eqline{$A$ is symmetric and positive definite.} \label{A:c1} \\
&\eqline{$A$ is $\tilde w$-invariant: $A^{\tilde w} = A$.} \label{A:c2}
\end{align}
This second condition encodes the fact that the $V^\pm$ are orthogonal to each other:
indeed, the $g=\alpha \tilde w\alpha^{-1}$
associated to $\alpha$ is only orthogonal when
\begin{equation}
1 = gg\os  = \alpha \tilde w \alpha^{-1} (\alpha^{-1})\os \tilde w \alpha\os
\end{equation}
which is equivalent to $A=A^{\tilde w}$.
The next Lemma shows that we can further require
\begin{align}
&\eqline{$A_{i,i} = 1$ and $A_{i,j} = 0$ if $i<j$ and $w(i)<w(j)$.} \label{A:c3}
\end{align}

\begin{lemma}\label{l:uniquenormform}
Every $V^\pm\in\bcell_{\tilde w}$ has a normal form \eqref{eq:normform} 
where $\norm{v_i}=1$ and
$\scprod{v_i}{v_j}=0$ if $i<j$ and $w(i)<w(j)$.
Under these assumptions the $v_i$ are uniquely determined by $V^\pm$.
\end{lemma}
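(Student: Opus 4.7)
My plan is to recast the conditions as properties of the Gram matrix $A_{i,j}=\scprod{v_i}{v_j}$; the normal form then says precisely that $A$ satisfies condition \eqref{A:c3}. Writing $v_i=\alpha(e_i)$ with $\alpha\in B^+$ as in Lemma \ref{l:alphadef}, the matrix $A=\alpha\os\alpha$ is symmetric, positive definite and $\tilde w$-invariant; moreover $\alpha$ is recovered from $A$ via Cholesky decomposition. The residual freedom in $\alpha$ for a given $g\in\bcell_{\tilde w}$ amounts to right multiplication by an element $\beta$ of the centralizer $C=\setof{\beta\in B^+}{\beta\tilde w=\tilde w\beta}$, which transforms $A$ to $\beta\os A\beta$. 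Thus the lemma reduces to showing that each $C$-orbit on the space of valid Gram matrices contains a unique element satisfying \eqref{A:c3}.

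A direct computation shows that $\beta\in B^+$ lies in $C$ iff $\beta_{w(i),w(j)}=\epsilon_i\epsilon_j\beta_{i,j}$; combined with upper-triangularity this forces $\beta_{i,j}=0$ unless $i\leq j$ and $w(i)\leq w(j)$. I will construct the required $\beta$ by induction on $j=1,\ldots,n$, arranging column $j$ of $A':=\beta\os A\beta$ to satisfy \eqref{A:c3}. If $j\leq w(j)$, the still-available parameters influencing column $j$ are $\beta_{i,j}$ for $i\leq j$ and $w(i)\leq w(j)$; their count is exactly $1+\card{\setof{i<j}{w(i)<w(j)}}$, matching the number of imposed conditions $A'_{j,j}=1$ and $A'_{i,j}=0$ for $i<j$, $w(i)<w(j)$. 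The resulting system is triangular and solvable uniquely with $\beta_{j,j}>0$ (using positive definiteness of the relevant principal submatrices of $A$). The equivariance relation simultaneously determines the entries of $\beta$ in column $w(j)\geq j$, which only modifies columns of $A'$ that have not yet been processed.

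The main obstacle lies in the case $j>w(j)$, in which no new parameters remain: column $j$ of $\beta$ is already fixed by the equivariance applied to the entries chosen in step $w(j)<j$, so column $j$ of $A'$ is determined. I then need to verify that \eqref{A:c3} holds automatically. The diagonal entry gives $A'_{j,j}=A'_{w(j),w(j)}=1$ directly from $\tilde w$-invariance. For $i<j$ with $w(i)<w(j)$, note that the pair $(w(i),w(j))$ again satisfies the same pattern (since $w(w(i))=i<j=w(w(j))$), so $A'_{w(i),w(j)}=0$ was already enforced when column $w(j)$ was normalized, giving $A'_{i,j}=\epsilon_i\epsilon_jA'_{w(i),w(j)}=0$. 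The key input is that the index set appearing in \eqref{A:c3} is invariant under $(i,j)\leftrightarrow(w(i),w(j))$. Uniqueness of the normalized $A$ is then immediate, since at each step the free parameters are pinned down uniquely by the normalization conditions.
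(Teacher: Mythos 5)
Your proposal follows essentially the same route as the paper: reduce to the Gram matrix $A=\alpha\os\alpha$, identify the residual freedom as $A\mapsto\beta\os A\beta$ for $\beta$ in the explicitly described centralizer $C(\tilde w)\cap B^+$, normalize column by column using positive definiteness of the principal submatrix indexed by the rises, and observe that the columns with $w(j)<j$ are handled automatically by $\tilde w$-invariance since the rise set is stable under $(i,j)\leftrightarrow(w(i),w(j))$. The argument is correct and no further comparison is needed.
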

\begin{proof}
We need to investigate the indeterminacy of $\alpha$ in the equation $g=\alpha\tilde w\alpha^{-1}$.
Clearly, $\alpha$ and $\alpha'$ give the same $g$ if and only if $\alpha' = \alpha \beta$ with $\beta\in C(\tilde w)\cap B^+$.
These allowable $\beta$ are explicitly given by the conditions
\begin{align}
\beta_{i,j} &= \begin{cases} \epsilon_i\epsilon_{j}\beta_{w(i),w(j)} & \text{$i\le j$ and $w(i)\le w(j)$,} \\
0 & \text{$i>j$ or $w(i)>w(j)$}.
\end{cases}
\end{align}
For $A=\alpha\os\alpha$ the normalization procedure
replaces $A$ by $\beta\os A\beta$ for such $\beta$.

Assume inductively that for some $k$ the submatrix of $A$ with $i,j<k$ is already normalized.
Let $R_k = \setof{j}{j<k,\, w(j)<w(k)}$ be the set of rises
in the $k$-th column; we need to find an $A'=\beta\os A\beta$ with $A'_{k,j}=0$ for $j\in R_k$.

Note that by $\tilde w$-invariance we have $A_{k,j} = \pm A_{w(k),w(j)}$, so
if $w(k)<k$ the $A_{k,j}$ with $j\in R_k$ are already zero. We can therefore assume
$w(k)\ge k$.

Let $B=(A_{i,j})_{i,j\in R_k}$ be the submatrix corresponding to the rises.
Since $A$ is positive definite, the same is true of the submatrix $B$. In particular, $B$ is invertible.
Let $e_j=A_{k,j}$ for $j\in R_k$ be the vector of error terms, and define correction terms $c$ via $c=B^{-1}(e)$.
Now let $\beta\in C(\tilde w)$ be the matrix with ones on the diagonal and
\begin{align}
-c_i& &\text{in the $i$-th row of the $k$-th column for $i\in R_k$,} \\
-\epsilon_i\cdot \epsilon_k\cdot c_i & &\text{in the $w(i)$-th row of the $w(k)$-th column for $w(i)\in R_k$.}
\end{align}
One can easily check that $A'=\beta\os A\beta$ now has $A'_{k,j}=0$ for $j\in R_k$, as desired.
A suitable diagonal matrix $\beta'$ will then transform $A'$ into an $A''$ that additionally also obeys
$A''_{k,k}=1$, which completes the inductive step.
\end{proof}

\begin{definition}\label{acelldef}
Let $\acell_{\tilde w}$
be the space of matrices $A$ satisfying the conditions \eqref{A:c1}, \eqref{A:c2}, \eqref{A:c3}.
\end{definition}

For a permutation $w$ let 
$$\inversions{w} = \setof{(i,j)}{i<j,\, w(i)>w(j)}$$
be the set of inversions of $w$. If $w$ is an involution one can also define the quotient
\begin{align}
\qinversions{w} &= \bigslant{\,\inversions{w}}{(i,j)\sim (w(j),w(i))}.
\end{align}
\begin{coro}
$\bcell_{\tilde w}$ is an open cell of dimension $\celldim{\tilde w} = \card{\qinversions{w}}$.
\end{coro}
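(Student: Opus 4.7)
The plan is to exhibit a homeomorphism $\acell_{\tilde w}\cong\bcell_{\tilde w}$ and then check that $\acell_{\tilde w}$ is a non-empty open convex subset of a real affine space of dimension $\card{\qinversions{w}}$, which will make it an open cell of that dimension.

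For the homeomorphism, I would define $\Phi\colon\acell_{\tilde w}\to\bcell_{\tilde w}$ by $\Phi(A)=\alpha\tilde w\alpha^{-1}$, where $\alpha\in B^+$ is the unique upper-triangular Cholesky factor of $A$. Bijectivity is the content of Lemma \ref{l:uniquenormform}, and continuity of $\Phi$ follows from the smoothness of the Cholesky factorization on the positive-definite cone together with matrix multiplication. For continuity of $\Phi^{-1}$ I would walk through the inductive normalization used in the proof of Lemma \ref{l:uniquenormform}: at the $k$-th step one inverts a principal submatrix $B=(A_{i,j})_{i,j\in R_k}$, which is positive definite and hence invertible, so the correction terms $c=B^{-1}(e)$ and the resulting normalized $A$ depend continuously on the input $g\in\bcell_{\tilde w}$.

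Conditions \eqref{A:c2} and \eqref{A:c3} are affine-linear in $A$, so they jointly cut out an affine subspace $L$ in the space of symmetric real $n\times n$ matrices. Condition \eqref{A:c1} defines an open convex cone inside this space, so $\acell_{\tilde w}=L\cap\{A : A\text{ is positive definite}\}$ is open and convex in $L$; it is non-empty because it contains the identity matrix. Since any non-empty open convex subset of $\RR^d$ is homeomorphic to $\RR^d$, it only remains to verify that $\dim L=\card{\qinversions{w}}$.

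The dimension count is pure linear algebra. Condition \eqref{A:c3} fixes $A_{i,i}=1$ and kills $A_{i,j}$ whenever $(i,j)$ is a non-inversion, leaving as potentially free precisely the entries indexed by $\inversions{w}$. Combining \eqref{A:c2} with the symmetry of $A$ yields the identification $A_{i,j}=\epsilon_i\epsilon_j A_{w(j),w(i)}$; the map $(i,j)\mapsto(w(j),w(i))$ is easily checked to be an involution on $\inversions{w}$, and this is exactly the equivalence generating $\qinversions{w}$. At the fixed points of this involution one has $w(i)=j$ and $w(j)=i$, so $\epsilon_i\epsilon_j=\epsilon_i^2=1$ and the identification never forces $A_{i,j}$ to vanish; each equivalence class therefore contributes exactly one real parameter, giving $\dim L=\card{\qinversions{w}}$. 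The main obstacle in the whole argument is the continuity of $\Phi^{-1}$, which has to be teased out of the inductive normalization procedure; once that is in place, everything else reduces to a tidy convex-geometry and parameter-counting argument.
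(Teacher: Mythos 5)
Your proposal is correct and follows the same route as the paper: identify $\bcell_{\tilde w}$ with $\acell_{\tilde w}$ via Lemma \ref{l:uniquenormform} and observe that the latter is a non-empty open convex subset of an affine space of dimension $\card{\qinversions{w}}$. You simply spell out the details (continuity of the normalization, the dimension count via the involution $(i,j)\mapsto(w(j),w(i))$ on $\inversions{w}$) that the paper dismisses as "easily seen".
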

\begin{proof}
The space $\acell_{\tilde w}$
is easily seen to be an open convex set of the indicated dimension.
By Lemma \ref{l:uniquenormform} it is homeomorphic to $\bcell_{\tilde w}$.
\end{proof}

We refer to $\celldim{\tilde w}$ as the \emph{cell dimension} of $\tilde w$.

We have so far set up a homeomorphism from
$\acell_{\tilde w}$ onto the Bruhat cell $\bcell_{\tilde w}$.
Unfortunately, that map can not in general be extended continuously to the closure
$\overline{\acell_{\tilde w}}$. This can already be seen in the case of $G_1(\RR^3) = \RR P^2$. 
Figure \ref{fig:rp2example} shows its Bruhat decomposition.
The involution 
$\tilde w=\left(\begin{smallmatrix}&&1\\&1&\\1&&\end{smallmatrix}\right)$
corresponds to the $2$-cell $B$. The space $\acell_{\tilde w}$ is given by
\begin{equation}
\acell_{\tilde w} = \left\{\,\left(
\begin{array}{rrr}1&y&x\\y&1&y\\x&y&1\end{array}
\right)\,\middle\vert\,-1+2y^2< x<1,\, \norm{y}< 1\right\}
\end{equation}
The effect of the map $\acell_{\tilde w}\rightarrow \bcell_{\tilde w}$ along the boundary is shown in the following picture:
The entire segment $x=1$, $\norm{y}\le 1$ is mapped to the point $Q$, whereas the endpoints
$(1,\pm 1)$ are blown-up onto the $1$-cells $\delta$ and $\gamma$.
\begin{equation}
\begin{tikzpicture}[scale=1.5]
\draw (1,-1) -- (1,0) -- (1,1);
\draw [decorate,decoration={brace,amplitude=10pt,mirror,raise=4pt},yshift=0pt]
(1,-0.9) -- (1,0.9) node [midway,xshift=0.8cm] {$Q$};
\draw[domain=-1:0] plot (-1+2*\x*\x,\x);
\draw[domain=0:1]  plot (-1+2*\x*\x,\x);
\draw[fill] (-1,0) circle [radius=1pt] node[left] {$R$};
\draw[fill] (1,-1) circle [radius=1pt] node[below right] {$\gamma$};
\draw[fill] (1,+1) circle [radius=1pt] node[above right] {$\delta$};
\draw node at (0.3,0) {$B$};
\draw node[above left] at (0,0.7071) {$\beta$}; 
\draw node[below left] at (0,-0.7071) {$\alpha$}; 
\end{tikzpicture}
\end{equation}
This example shows that we need a different 
coordinate system to turn the $\bcell_{\tilde w}$
into the cells of a CW-decomposition.

\newcommand{\ttmat}[1]{\left(\begin{smallmatrix}#1\end{smallmatrix}\right)}
\begin{figure}
\begin{tikzpicture}[scale=1.5]
\draw (2,2) circle [radius=2];
\draw[thin,->] (4,2) arc (0:45:2) node[above right] {$\alpha$};
\draw[thin,->] (2,4) arc (90:135:2) node[above left] {$\beta$};
\draw[thin,->] (0,2) arc (180:225:2) node[below left] {$\alpha$};
\draw[thin,->] (2,0) arc (270:315:2) node [below right] {$\beta$};
\draw[thin,->] (2,0) -- (2,1) node[right] {$\gamma$}; 
\draw[thin,->] (2,2) -- (2,3) node[right] {$\delta$}; 
\draw (2,0) -- (2,4);
\draw[fill] (2,0) circle [radius=1pt] node[below] {$P$};
\draw[fill] (2,4) circle [radius=1pt] node[above] {$P$};
\draw[fill] (2,2) circle [radius=1pt] node[left]  {$Q$};
\draw[fill] (4,2) circle [radius=1pt] node[right] {$R$};
\draw[fill] (0,2) circle [radius=1pt] node[left]  {$R$};
\draw node at (1,2) {$A$}; 
\draw node at (3,2) {$B$}; 
\end{tikzpicture}
\caption{The Bruhat decomposition of $\RR P^2$. \label{fig:rp2example}
It consists of three $0$-cells $P$, $Q$, $R$, four 1-cells $\alpha$, $\beta$, $\gamma$, $\delta$
and two $2$-cells $A$ and $B$.
The corresponding signed involutions are}
\bigskip
\begin{small}
\begin{centering}
\begin{tabular}{rrr}
$P \simeq \ttmat{+1&&\\&-1&\\&&+1},$ & $Q \simeq \ttmat{-1&&\\&+1&\\&&+1},$ & $R \simeq \ttmat{+1&&\\&+1&\\&&-1},$
\\[10pt]
$\{\alpha,\beta\} \simeq \ttmat{+1&&\\&&\pm1\\&\pm1&}$, &
$\{\gamma,\delta\} \simeq \ttmat{&\pm1&\\\pm1&&\\&&+1}$, &
$\{A,B\} \simeq \ttmat{&&\pm 1\\&+1&\\\pm1&&}$.
\end{tabular}
\end{centering}
\end{small}
\end{figure}
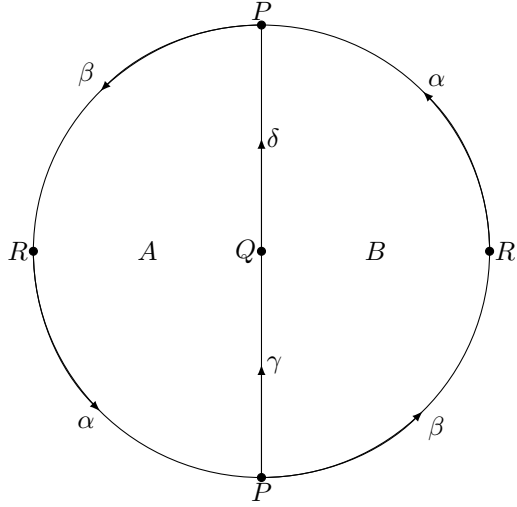

\end{section}

%%%%%%%%%%%%%%%%%%%%%%%%%%%%%%%%%%%%%%%%%%%%%%%%%%%%%%%%%%%%%%%%%%%%%%%%%%%%%%%%%%%%%%%%%%%%%%%%%
%%%%%%%%%%%%%%%%%%%%%%%%%%%%%%%%%%%%%%%%%%%%%%%%%%%%%%%%%%%%%%%%%%%%%%%%%%%%%%%%%%%%%%%%%%%%%%%%%
%%%%%%%%%%%%%%%%%%%%%%%%%%%%%%%%%%%%%%%%%%%%%%%%%%%%%%%%%%%%%%%%%%%%%%%%%%%%%%%%%%%%%%%%%%%%%%%%%

\begin{section}{Attaching maps}\label{sec:attaching}

In this section we will construct attaching maps that exhibit the $\bcell_{\tilde w}$
as the cells of a CW decomposition of $G_k(\RR^n)$.

Let $\tilde w$ be a signed involution and write $l$ for $w(1)$.
Let $\tau_{1,l}$ be  the transposition that switches $1$ and $l$
and decompose $\tilde w = \epsilon_1\tau_{1,l}\cdot \tilde w'$
where $\epsilon_1\in\{\pm1\}$ is a sign and $\tilde w'$ fixes both $e_1$ and $e_l$.

To prepare the ground for an inductive construction of attaching maps we will establish a fibration 
\begin{equation}\label{eq:fiber}
\begin{tikzcd}
F \arrow[r, tail] & \bcell_{\tilde w} \arrow[r, two heads, "p"] & D^{l-1}
\end{tikzcd}
\end{equation}
where $D^{l-1}$ is an open disk of dimension $(l-1)$ and $F\cong\bcell_{\tilde w'}\subset G_{k'}(\RR^{n'})$.
Here 
\begin{equation}\label{eq:cases}
(k',n')=\begin{cases}(k-1,n-2) & l\not=1,\\
(k,n-1) & l=1, \epsilon_1=+1\\
(k-1,n-1) & l=1, \epsilon_1=-1.\end{cases}
\end{equation}

For a matrix $A$ from $BwB$ the $w(1)$ can be recovered as the smallest $j$ such that $Ae_1\in\RR\{e_1,\ldots,e_j\}$.
For $g\in\bcell_{\tilde w}$ this says that $g(e_1)$ is contained in the $(l-1)$-dimensional
half-sphere 
\begin{align}
S^{l-1}_+ &= \setof{(x_1,\ldots,x_l,0,\ldots,0)}{\sum x_i^2=1,\, \epsilon_1\cdot x_l>0}.
\end{align}
This half-sphere is our cell $D^{l-1}$ and the map $p$ is given by $p(g)=g(e_1)$.

We define continuous functions $u_1,u_1^+,u_1^-:D^{l-1}\rightarrow\RR^n$ via
\begin{align}\label{eq:udef}
u_1^\pm(x) &= \frac{e_1\pm x}{\norm{e_1\pm x}}, & u_1(x) &= \epsilon_1 \cdot \frac{\pi_{e_1^\perp}(x)}{\norm{\pi_{e_1^\perp}(x)}}.
\end{align}
Here $\pi_W$ denotes the orthogonal projection onto $W$.
By construction we have
\begin{align}
g(u_1^\pm) &= \pm u_1^\pm,\quad \scprod{u_1}{e_1}=0,\quad  \scprod{u_1}{e_l} > 0.
\end{align}
It's also easy to show that $\vspan\{e_1,u_1\} = \vspan\{u_1^+,u_1^-\}$.

\newcommand{\perpsum}{\mathbin{\oplus}}
Given $g\in \bcell_{\tilde w}$ with $(\pm1)$-eigenspaces $V^\pm$ we now have normalized sections $u_1^\pm \in V^\pm$.
We can use those to decompose the $V^\pm$ as an orthogonal sum: 
\begin{align}
V^\pm &= \RR \cdot u_1^\pm \,\perpsum\, W^\pm, \quad W^\pm = \pi_{\{e_1,u_1\}^\perp}\left(V^\pm\right)
\end{align}

To get a grip on $W^\pm$ we first map it into $\{e_1,e_l\}^\perp$. For this we
quote from \milnor{proof of Lemma 6.3} the transformation $T_{u,v}\in O(n)$ with
\begin{align}\label{eq:tdef}
T_{u,v}(x) &=  x - \frac{\scprod{u+v}{x}}{1+\scprod{u}{v}}\cdot(u+v) + 2\scprod{u}{x}\cdot v.
\end{align}
For unit vectors $u$, $v$ with $u\not=-v$ this formula defines the rotation that maps $u$ to $v$
and leaves everything perpendicular to $u$ and $v$ fixed.

We now let $T = T_{u_1,e_l}$ and find that
\begin{equation}
T\left(W^\pm\right ) = T \left( \pi_{\{e_1,u_1(x)\}^\perp} \left(V^\pm\right) \right) 
= \pi_{\{e_1,e_l\}^\perp} \left( T\left(V^\pm\right)\right) \subset \{e_1,e_l\}^\perp.
\end{equation}

Write $\Phi$ for the composition $T \circ \pi_{\{e_1,u_1(x)\}^\perp} = \pi_{\{e_1,e_l\}^\perp} \circ T$.
Let 
\begin{align}
V^\pm &= \vspan\setof{v_i \pm \epsilon_i v_{w(i)}}{1\le i<w(i)}
\end{align}
be the normal form as in Lemma \ref{l:normform}.
We have
\begin{align}
T\left(W^\pm\right) &= \vspan\setof{\Phi(v_i) \pm \epsilon_i \Phi(v_{w(i)})}{2\le i<w(i)}
\end{align}
since $v_1\pm\epsilon_1v_{w(1)}\in\RR\{e_1,u_1\}$.
The $\Phi(v_i)$ are upper-triangular with respect to the flag 
\begin{align}\label{l:flagdef}
\mathcal{F}_k = \vspan\setof{\Phi(e_j)}{j=2,\ldots,\widehat{l},\ldots,k}.
\end{align}
By Lemma \ref{l:normform} this shows that $T\left(W^\pm\right)$ belongs to the Bruhat cell
$\bcell_{\tilde w'}$ with respect to $\mathcal{F}_\bullet$. 

It remains to exhibit an orthogonal $\Omega\in O(n)$ that maps the flag $\mathcal{F}_\bullet$ 
to the standard flag of $E = \{e_1,e_l\}^\perp$.
Given such an $\Omega$
we can write down a homeomorphism
$\begin{tikzcd}\Xi:D^{l-1}\times \bcell_{\tilde w'}  \arrow[r,"\cong"] & \bcell_{\tilde w}.\end{tikzcd}$
via
\begin{align}\label{eq:homeom}
\Xi(x,R^\pm) &= \RR u_1^\pm + T^{-1}\left(\Omega^{-1}\left(R^\pm\right)\right).
\end{align}

To construct $\Omega$ we must prove that the flag $\mathcal{F}_\bullet$ is indeed well-defined.
First note that $\Phi$ is injective on $E$: let $x\in E$ and decompose $u_1 = B + \lambda e_l$ with $B\in E$ and $\lambda>0$,
$\norm{B}^2+\lambda^2=1$. We find
\begin{align}
\Phi(x) &= x - \frac{\scprod{B}{x}}{1+\lambda}B.
\end{align}
The norm of the second summand is easily seen to be less than $\norm{x}$ which implies $\ker\Phi=0$.

The $\Omega\in O(n)$ is now uniquely determined by the requirements
\begin{equation}
\Omega^{-1}(e_k) \equiv c_k\cdot \Phi(e_k) \mod \setof{\Phi(e_j)}{j<k,\, j\not=l},\quad c_k>0. 
\end{equation}

We have therefore proved
\begin{lemma}
The sequence \eqref{eq:fiber} is a fiber sequence and the map \eqref{eq:homeom}
defines a trivialization 
$\begin{tikzcd}\Xi:D^{l-1}\times \bcell_{\tilde w'}  \arrow[r,"\cong"] & \bcell_{\tilde w}.\end{tikzcd}$
\end{lemma}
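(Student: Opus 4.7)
The plan is to verify that the construction developed above produces mutually inverse continuous maps; most of the substantive geometric work (definitions of $u_1^\pm$, $T$, $\Omega$, and the flag $\mathcal{F}_\bullet$) is already in place, so the proof reduces to checking well-definedness, two-sided continuity, and compatibility with $p$.

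First I would establish that $p : \bcell_{\tilde w} \to D^{l-1}$ is well-defined and continuous. Writing $g = b_1 \tilde w b_2$ with $b_i \in B^+$, upper triangularity gives $b_2 e_1 = \lambda e_1$ for some $\lambda > 0$, hence $g(e_1) = \lambda \epsilon_1 b_1(e_l)$, which lies in $\vspan\{e_1, \ldots, e_l\}$ with $\epsilon_1 \scprod{g(e_1)}{e_l} > 0$. Orthogonality of $g$ forces $\norm{g(e_1)} = 1$, so $g(e_1) \in S^{l-1}_+ = D^{l-1}$; continuity is clear.

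Second, I would verify that $\Xi(x, R^\pm) \in \bcell_{\tilde w}$. Orthogonality of the two summands is automatic: $u_1^\pm \in \vspan\{e_1, u_1(x)\}$ while $T^{-1}\Omega^{-1}(R^\pm)$ lies in the orthogonal complement, and $R^+ \perp R^-$ by hypothesis. To check Bruhat membership I would apply Lemma \ref{l:normform}: starting from a normal form $R^\pm = \vspan\{r_i \pm \epsilon_i r_{w'(i)}\}$ for $\tilde w'$ (with $i \in \{2, \ldots, \widehat{l}, \ldots, n\}$), one takes $v_1$ a positive multiple of $e_1$, $v_l$ a positive multiple of $\epsilon_1 x$, and $v_i = T^{-1}\Omega^{-1}(r_i)$ for the remaining indices. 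The key verification is that each $v_i$ lies in $H_i$: for $i = 1$ this is trivial, for $i = l$ it follows from $\epsilon_1 x_l > 0$, and for general $i$ it rests on the positive-diagonal normalization built into the definition of $\Omega$.

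Third, I would produce an explicit inverse $\Xi^{-1}(g) = \bigl(g(e_1),\, \Omega\, T(W^\pm)\bigr)$ with $W^\pm = \pi_{\{e_1, u_1(g(e_1))\}^\perp}(V^\pm)$. The formulas \eqref{eq:udef} and \eqref{eq:tdef} depend continuously on $x$, and $\Omega(x)$ can be produced by Gram--Schmidt applied to $\Phi(e_2), \ldots, \widehat{\Phi(e_l)}, \ldots, \Phi(e_n)$, hence also depends continuously on $x$. A direct calculation using $e_1 = \tfrac{1}{2}(a_+ u_1^+ + a_- u_1^-)$ with $a_\pm = \norm{e_1 \pm x}$ yields $g(e_1) = x$, so $p \circ \Xi = \mathrm{pr}_{D^{l-1}}$.

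I expect the main obstacle to be the verification in step two that the reconstructed $v_i$ lie in $H_i$ and respect the orthogonality conditions of Lemma \ref{l:uniquenormform}; this is essentially an exercise in upper-triangular bookkeeping, made tractable by the positive-scalar freedom in the definition of $\Omega$.
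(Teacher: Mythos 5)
Your proposal is correct and follows essentially the same route as the paper, which presents this Lemma as a summary of the preceding construction (the sections $u_1^\pm$, the rotation $T$, the flag $\mathcal{F}_\bullet$ and the orthogonalization $\Omega$). The extra details you supply --- the explicit inverse, the verification that the reconstructed $v_i$ lie in $H_i$ via the positivity in the definition of $\Omega$, and the check $p\circ\Xi=\mathrm{pr}_{D^{l-1}}$ --- are exactly the verifications the paper leaves implicit.
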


We next wish to extend $\Xi$ to the closures of the cells involved in order to get attaching maps for the $\bcell_{\tilde w}$.
Assume inductively we already had an attaching map $\Xi_{\tilde w'}:\widehat{D^{\dim \tilde w'}} \rightarrow \overline{\bcell_{\tilde w'}}$
for the $\tilde w'$-cell.
We will describe below a compactification $\widehat{D^{l-1}}$ of $D^{l-1}$ that still supports the maps
$u_1$, $u_1^\pm$, $T$ and $\Omega$ and which is again topologically an $(l-1)$-cell. 
As in \eqref{eq:homeom} the formula
\begin{align}
\Xi_{\tilde w}(x,y) &= \RR u_1^\pm + T^{-1}\left(\Omega^{-1}\left(\Xi_{\tilde w'}(y)\right)\right)
\end{align}
then defines the required attaching map 
\begin{equation}
\begin{tikzcd}\Xi_{\tilde w}:\widehat{D^{l-1}}\times \widehat{D^{\dim \tilde w'}}  \arrow[r] & \overline{\bcell_{\tilde w}}\end{tikzcd}.
\end{equation}

It remains to describe $\widehat{D^{l-1}}$. 
When we try to extend the maps $u_1$, $u_1^\pm$, $\Omega$ from $D^{l-1}$ to the
naive closure $\overline{D^{l-1}}$ we encounter two problems:
\begin{enumerate}
\item\label{prob:u}
The maps $u_1,u_1^\pm$ become ill-defined at $\pm e_1$.
\item\label{prob:pi}
When $\scprod{u_1}{e_l}$ is allowed to reach zero, the $\Phi(e_i)$ can become zero.
\end{enumerate}

The first problem can be handled by blowing up the points $\pm e_1$ in $\overline{D^{l-1}}$:
let $\overline{D^{l-2}} = \setof{y=(y_2,\ldots,y_l)}{\norm{y}=1,\,\epsilon_1\cdot y_l\ge 0}$ and
$X = [-1;+1]\times \overline{D^{l-2}}$.
Consider the map 
\begin{equation}
\begin{tikzcd}
 X = {}[-1;+1]\times \overline{D^{l-2}}  \arrow[r,"\chi"] & \overline{D^{l-1}}
 \end{tikzcd}
\end{equation}
with $\chi(x_1,y) = \left(x_1,\sqrt{1-x_1^2}\cdot y\right)$.
One finds
\begin{equation}
u_1(\chi(x_1,y)) = y,\quad u_1^\pm(\chi(x_1,y)) = \frac{1}{\sqrt{2}}\left(\sqrt{1\pm x_1},\sqrt{1\mp x_1}\cdot y\right)
\end{equation}
which shows that $u_1$ and $u_1^\pm$ are well-defined and continuous on $X$.

The second problem is solved similarly, by replacing $\overline{D^{l-2}}$ with the product $Y=[0;\pi]^{l-2}$ of
$(l-2)$ closed intervals. Let $\rotat{i/j}{\theta}$ denote the rotation in the plane $\RR\{e_i,e_j\}$ by the angle $\theta$
and let
\begin{align}
R_\theta &= \rotat{l-1/l}{\theta_{l-1}} \circ \cdots \circ \rotat{3/4}{\theta_3} \circ \rotat{2/3}{\theta_2},
\qquad \theta = (\theta_2,\ldots,\theta_{l-1})\in [0;\pi]^{l-2}.
\end{align}
We define $\psi:Y\rightarrow \overline{D^{l-2}}$ via $\psi(\theta) = R_\theta(e_2)$.
Using $c_k = \cos\theta_k$ and $s_k = \sin\theta_k$ this can be described explicitly as
\begin{align}
u_1 &= c_2e_2 + s_2\left( c_3e_3 + s_3 \left( c_4e_4 + \cdots 
+ s_{l-2}\left( c_{l-1} e_{l-1} + s_{l-1} e_l \right) \cdots \right) \right).
\end{align}
We claim that for all $\theta$ the flag $\mathcal{G}_\bullet$ spanned by
\begin{equation}
\pi_{u_1^\perp}(e_2), \pi_{u_1^\perp}(e_3), \ldots, \pi_{u_1^\perp}(e_{l-1})
\end{equation}
is well-defined. Indeed, this follows from the computation
\begin{align}
\pi_{u_1^\perp}(e_2) &= s_2 \cdot R_{\theta}(e_3), \\
\Lambda^2(\pi_{u_1^\perp})(e_2\land e_3) &= s_2 s_3 \cdot \Lambda^2(R_{\theta})(e_3\land e_4), \\
%\Lambda^3(\pi_{u_1^\perp})(e_2\land e_3\land e_4) &= s_2 s_3 s_4 \cdot \Lambda^3(R_{\theta})(e_3\land e_4\land e_5), \\
&\cdots \\
\Lambda^{l-1}(\pi_{u_1^\perp})(e_2\land \cdots\land e_{l-1}) &= s_2 \cdots s_{l-1} \cdot \Lambda^{l-1}(R_{\theta})(e_3\land\cdots \land e_l),
\end{align}
which shows $\mathcal{G}_k = \vspan\setof{R_\theta(e_j)}{j=3,\ldots,k+1}$.
It follows that 
\begin{equation}
\mathcal{F}_k = T\mathcal{G}_k = \vspan\setof{TR_\theta(e_j)}{j=3,\ldots,k+1}
\end{equation}
is also well-defined, and the required $\Omega$ can be defined via $\Omega(e_k) = TR_{\theta}(e_{k+1})$.

\end{section}

%%%%%%%%%%%%%%%%%%%%%%%%%%%%%%%%%%%%%%%%%%%%%%%%%%%%%%%%%%%%%%%%%%%%%%%%%%%%%%%%%%%%%%%%%%%%%%%%%
%%%%%%%%%%%%%%%%%%%%%%%%%%%%%%%%%%%%%%%%%%%%%%%%%%%%%%%%%%%%%%%%%%%%%%%%%%%%%%%%%%%%%%%%%%%%%%%%%
%%%%%%%%%%%%%%%%%%%%%%%%%%%%%%%%%%%%%%%%%%%%%%%%%%%%%%%%%%%%%%%%%%%%%%%%%%%%%%%%%%%%%%%%%%%%%%%%%

\makeatletter
\define@key{mp}{posone}[1]{\def\mposone{#1}}
\define@key{mp}{postwo}[2]{\def\mpostwo{#1}}
\define@key{mp}{posthree}[3]{\def\mposthree{#1}}
\define@key{mp}{posfour}[4]{\def\mposfour{#1}}
\define@key{mp}{labelone}[]{\def\mplabone{#1}}
\define@key{mp}{labeltwo}[]{\def\mplabtwo{#1}}
\define@key{mp}{labelthree}[]{\def\mplabthree{#1}}
\define@key{mp}{labelfour}[]{\def\mplabfour{#1}}
\define@key{mp}{hdr}[]{\def\mpheading{#1}}
\makeatother

\newenvironment{matpic}[1]{\setkeys{mp}{hdr,posone,postwo,posthree,posfour,labelone,labeltwo,labelthree,labelfour,#1}
\begin{tikzpicture}[scale=0.5]
\path (-2,-1) rectangle (5,5);
\node [right] at (-2,5.3) {\mpheading};
\draw (-1,0) -- (5,0);
\draw (0,-1) -- (0,4.3);
\ifthenelse{\equal{\mplabone}{}}{}{\draw (\mposone,+0.1) -- (\mposone,-0.1) node [below] {\scriptsize $\mplabone\vphantom{(i)}$};}
\ifthenelse{\equal{\mplabtwo}{}}{}{\draw (\mpostwo,+0.1) -- (\mpostwo,-0.1) node [below] {\scriptsize $\mplabtwo\vphantom{(i)}$};}
\ifthenelse{\equal{\mplabthree}{}}{}{\draw (\mposthree,+0.1) -- (\mposthree,-0.1) node [below] {\scriptsize $\mplabthree\vphantom{(i)}$};}
\ifthenelse{\equal{\mplabfour}{}}{}{\draw (\mposfour,+0.1) -- (\mposfour,-0.1) node [below] {\scriptsize $\mplabfour\vphantom{(i)}$};}
\ifthenelse{\equal{\mplabone}{}}{}{\draw (+0.1,5-\mposone) -- (-0.1,5-\mposone) node [left] {\scriptsize $\mplabone$};}
\ifthenelse{\equal{\mplabtwo}{}}{}{\draw (+0.1,5-\mpostwo) -- (-0.1,5-\mpostwo) node [left] {\scriptsize $\mplabtwo$};}
\ifthenelse{\equal{\mplabthree}{}}{}{\draw (+0.1,5-\mposthree) -- (-0.1,5-\mposthree) node [left] {\scriptsize $\mplabthree$};}
\ifthenelse{\equal{\mplabfour}{}}{}{\draw (+0.1,5-\mposfour) -- (-0.1,5-\mposfour) node [left] {\scriptsize $\mplabfour$};}
}{\end{tikzpicture}}

\begin{section}{The cell complex}\label{sec:incidence}
We now know that the Grassmannian has a CW decomposition indexed by the signed involutions
$\tilde w\in \tilde W$. Our next task is to compute the differential in the associated cellular chain complex. 
This amounts to a determination of the \emph{incidence numbers} $\incid{\tilde v}{\tilde w}$ 
for cells $\bcell_{\tilde v}$, $\bcell_{\tilde w}$ of adjacent dimensions.
We therefore need to understand what the neighborhood of a boundary point 
$\tilde v\in\overline{\bcell_{\tilde w}}$ looks like. 

Classically, when dealing with Schubert cells $X(w)=\overline{BwP} \subset G/P \cong G_k$
the analogous question is well understood:
Assume $v\in X(w)$ and let $B\os\subset G$ denote the 
opposite Bruhat subgroup. Then there is an isomorphism \cite[Lemma 3.2]{MR2422304}
\begin{equation}
\label{eq:schubertproddecomp}
X(w)\cap v B\os P \xrightarrow{\,\cong\,} \mathbb{A}^{l(v)} \times \left(X(w)\cap B\os vP\right) 
\end{equation}
The left hand side is an open neighborhood of $v$ in $X(w)$ and the isomorphism shows that locally near $v$
the variety $X(w)$ decomposes as a product of the affine space $\mathbb{A}^{l(v)}$ and the
{\em Richardson variety} $X_w^v = X(w)\cap B\os vP$. Furthermore, these $X_w^v$ turn out to be
non-singular for a codimension-one boundary point, which implies that near $v$ the cell $X(w)$ 
splits as the product $\RR \times e^l$ where $e^l$ is an open neighborhood of $v$ in $X(v)$.
It follows that $X(v)$ is incident with just two cells of dimension $l(v)+1$ 
corresponding to $\RR_{\ge 0} \times e^l$ and $\RR_{\le 0} \times e^l$. As is well known, these
cells coincide and the incidence numbers are either $0$ or $\pm 2$.

We are going to follow a similar approach here. Let 
$\Invol\subset \Gl(n)$ be the set of all involutions and $\Grassm \subset \Invol$ the subset of
orthogonal involutions.
There is a continuous projection 
\begin{equation}\label{def:orthoproj}
\pi:\Invol \rightarrow \Grassm, \quad
Z \mapsto \theta Z \theta^{-1}
\end{equation}
where $\theta\in B^+$ is defined by the Cholesky decomposition $\theta\os \theta = 1 + Z\os Z$.
One easily checks that $\pi$ is idempotent and that it preserves the Bruhat decomposition given by the
$B^+\tilde vB^+$.

Using $\pi$ we can define a map
\begin{equation}\label{def:kosdecomp}
B \times \left(\Grassm \cap B\os v B\right) \xrightarrow{\,\,\chi\,\,} \Grassm \cap BB\os v B
\end{equation}
via $(b,g) \mapsto \pi\left(bgb^{-1}\right)$. 
In analogy to \eqref{eq:schubertproddecomp} one might conjecture that this map becomes an isomorphism 
when the first factor is suitably restricted. 
This is not true in general, as shown
later on page \pageref{ex:rp2flowlines}.
However, locally near $\tilde v$ the map does induce an isomorphism, and for our purposes that is sufficient.

Recall that $\acell_{\tilde v}$ denotes the space of positive definite, $\tilde v$-invariant matrices that
are normalized as described in Lemma \ref{l:uniquenormform}.
The Cholesky decomposition defines an embedding $\acell_{\tilde v}\subset B^+$ via $\alpha\os\alpha \mapsto\alpha$.

The following Theorem will be proved in subsection \ref{sec:proddec} below.
It serves as our analogue of the isomorphism \ref{eq:schubertproddecomp}.
\begin{thm}\label{thm:kosdecomp}
The restriction of $\chi$ to $\acell_{\tilde v}\times \Grassm$ is a local diffeomorphism near $(1,\tilde v)$.
%There is a neighborhood $W$ of $\tilde v$ in $\Grassm$ such that 
% $\chi$ %\eqref{def:kosdecomp}
%induces an isomorphism $\chi^{-1}(W) \cap \left( \acell_{\tilde v} \times \left(\Grassm \cap B\os v B\right) \right) \rightarrow W$.
\end{thm}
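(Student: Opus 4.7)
The plan is to apply the inverse function theorem: I compute the differential of $\chi$ at $(1,\tilde v)$ and verify that it is a linear isomorphism from $T_1\acell_{\tilde v}\oplus T_{\tilde v}(\Grassm\cap B\os\tilde vB)$ onto $T_{\tilde v}\Grassm$.

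A preliminary observation is that $\pi(\tilde v)=\tilde v$: since $\tilde v$ is symmetric and orthogonal, $1+\tilde v\os\tilde v = 2\id$ has Cholesky factor $\sqrt 2\,\id$, whose conjugation action is trivial. A second observation is that for $\alpha\in B^+$ ranging over the image of the Cholesky embedding $\acell_{\tilde v}\hookrightarrow B^+$, the element $\alpha\tilde v\alpha^{-1}$ is already orthogonal and parametrises $\bcell_{\tilde v}$ bijectively by Lemmas \ref{l:alphadef} and \ref{l:uniquenormform}. In particular, $\pi$ acts as the identity along this slice, and the commutator $[X,\tilde v]$ lies in $T_{\tilde v}\bcell_{\tilde v}\subset T_{\tilde v}\Grassm$ for every $X\in T_1\acell_{\tilde v}$.

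Writing $\chi$ as the conjugation map followed by $\pi$ and using that $\pi$ is an idempotent retraction of $\Invol$ onto $\Grassm$ (so $d\pi_{\tilde v}$ is the identity on $T_{\tilde v}\Grassm$), the chain rule gives
\begin{equation}
d\chi_{(1,\tilde v)}(X,W) \;=\; [X,\tilde v] \,+\, W.
\end{equation}
A dimension count shows that source and target have the same dimension $\dim\Grassm$, so it suffices to prove injectivity. If $[X,\tilde v]+W=0$ then $-W=[X,\tilde v]$ lies simultaneously in $T_{\tilde v}\bcell_{\tilde v}$ and in the opposite-cell tangent space $T_{\tilde v}(\Grassm\cap B\os\tilde vB)$.

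The principal obstacle --- and all of the genuine work --- is therefore the transversality statement
\begin{equation}
T_{\tilde v}\bcell_{\tilde v} \,\cap\, T_{\tilde v}(\Grassm\cap B\os\tilde vB) \;=\; 0,
\end{equation}
together with the injectivity of the commutator map $X\mapsto [X,\tilde v]$ on $T_1\acell_{\tilde v}$. I would establish both by working directly in $\lie{sl}_n$: exploiting the explicit characterisation of $T_1\acell_{\tilde v}$ as upper-triangular tangents to matrices satisfying the normalisation of Definition \ref{acelldef}, one identifies $T_{\tilde v}\bcell_{\tilde v}$ with the subspace of $\tilde v$-anti-commuting upper-triangular directions indexed by $\qinversions{w}$; the opposite-cell tangent space then corresponds to the complementary (strictly lower-triangular and non-inversion) directions, making transversality manifest.
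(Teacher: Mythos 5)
Your strategy is the same as the paper's: compute $d\chi_{(1,\tilde v)}$ and identify it with the decomposition of $T_{\tilde v}\Grassm=\setof{\phi\in\lie{so}_n}{\tilde v\phi=-\phi\tilde v}$ into an ``inversion part'' contributed by $\acell_{\tilde v}$ and a ``rise part'' contributed by $\Grassm\cap B\os vB$ (this is Lemma \ref{l:chilocaldiff}). One caveat on your formula $d\chi(X,W)=[X,\tilde v]+W$: it is only correct if $X$ denotes the derivative of the Cholesky factor $\alpha$, i.e.\ a strictly upper-triangular $\rho$ supported on the $v$-inversions with $\rho^{\tilde v}=\rho\os$. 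If $X$ is read literally as a tangent vector to $\acell_{\tilde v}$ at $1$, it is the symmetric matrix $\rho+\rho\os$ with $X^{\tilde v}=X$, which commutes with $\tilde v$, so $[X,\tilde v]=0$ identically. The whole point is that one conjugates by $\alpha$, not by $A$; the correct first-factor differential is $[\rho,\tilde v]=\tilde v(\rho\os-\rho)$. Since you later speak of ``upper-triangular tangents'' this may be what you intend, but as written the formula kills the first factor.

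The genuine gap is in the second factor. Your argument needs (i) that $\Grassm\cap B\os vB$ is a smooth manifold near $\tilde v$ of dimension $\card{\qrises{v}}$ complementary to $\celldim{\tilde v}$, and (ii) that its tangent space at $\tilde v$ is spanned by the rise directions $\tilde v(\eta-\eta\os)$ with $\eta$ strictly upper-triangular, supported on the $v$-rises, and satisfying $\eta^{\tilde v}=-\eta$. Neither is automatic for an intersection of $\Grassm$ with an opposite Bruhat cell, and your description of this tangent space as the ``strictly lower-triangular and non-inversion directions'' is not correct as stated (the tangent vectors are anti-symmetric, not lower-triangular, exactly as those of $\bcell_{\tilde v}$ are not upper-triangular). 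The paper supplies the missing input in Lemma \ref{l:rnormal}, which writes $\Grassm\cap B\os vB=\setof{b\os vb}{bb\os\in\widehat{\rcell_{\tilde v}}}$ and then exhibits $\rcell_{\tilde v}$ as a homogeneous space of $O_{\tilde v}$; this is what makes both the dimension count and the application of the inverse function theorem legitimate. Once that is in place your transversality step does close: $\tilde v(\eta-\eta\os)=\tilde v(\rho\os-\rho)$ forces $\eta+\rho=(\eta+\rho)\os$, hence $\eta+\rho=0$, hence $\eta=\rho=0$ because rises and inversions are disjoint.
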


The plan for the remainder of this section is this:
We first review the work of Incitti on the covering relations between involutions.
This provides us with a very explicit description of the $\tilde v$ that can occur as a codimension-one
boundary point of a $\bcell_{\tilde w}$.  

We then compute the generalized Richardson varieties 
\begin{equation}\label{def:rhichvar}
\xcell_w^v = \Grassm \cap B\os v B \cap B\tilde w B
\end{equation}
explicitly for these codimension-one points. It turns out that they are circles
that connect $v$, $w$ and two sibling cells $v'$, $w'$ distinct from $v$ and $w$.
This allows us to conclude that $\incid{v}{w}=\pm 1$ for these $v\coveredby w$.

We finally prove Theorem \ref{thm:kosdecomp} and determine the incidence numbers. 

%%%%%%%%%%%%%%%%%%%%%%%%%%%%%%%%%%%%%%%%%%%%%%%%%%%%%%%%%%%%%%%%%%%%%%
%%%%%%%%%%%%%%%%%%%%%%%%%%%%%%%%%%%%%%%%%%%%%%%%%%%%%%%%%%%%%%%%%%%%%%
%%%%%%%%%%%%%%%%%%%%%%%%%%%%%%%%%%%%%%%%%%%%%%%%%%%%%%%%%%%%%%%%%%%%%%
%%%%%%%%%%%%%%%%%%%%%%%%%%%%%%%%%%%%%%%%%%%%%%%%%%%%%%%%%%%%%%%%%%%%%%

\begin{subsection}{Incitti's classification}
We first need to get a better understanding of the $\tilde v$ that can appear in a given
$\overline{\bcell_{\tilde w}}$.
Starting point are the incidence relations between the $X(w) = \overline{BwB}$, which are well-known:
one has $X(w) \subset X(w')$
if and only if $w \le w'$ in the strict Bruhat order of the permutation group $W$.
The incidence relations between the $\xcell_w = BwB\cap \Grassm$ for involutions $w\in W$
have also been determined:
this is the subject of Incitti's paper \cite{MR2106960}
and we start with a quick recollection of his results.

Consider an involution $w\in W$. The pair $(i,j)$ is called a \emph{rise} of $w$ if $i<j$ and $w(i)<w(j)$.
The rise is \emph{free} if there is no $k$ between $i$ and $j$ with $w(i)<w(k)<w(j)$.

For an integer  $i$ we define the \emph{$w$-type} of $i$, denoted $\wtp{i}$, to be $d$, $e$, or $f$, 
depending on whether $w(i)<i$, or $w(i)=i$, or $w(i)>i$.
The letters $d$, $e$, $f$ stand, respectively, for "deficiency", "excedance" and "fixed point".

The $w$-type of a rise $(i,j)$ is the pair $(\wtp{i},\wtp{j})$. A rise is called \emph{suitable}
if it is free and its type is one of $f\!f$, $fe$, $ef$, $ee$, or $ed$.

A rise of type $ee$ needs to be further differentiated: it is called \emph{crossing} if $i<w(i)<j<w(j)$
and \emph{non-crossing} if $i<j<w(i)<w(j)$.

Finally, recall that for any poset $P$ a \emph{covering relation} $p\coveredby q$
means that $p<q$ and there is no $r$ such that $p<r<q$.

To an involution $w$ and a suitable rise $(i,j)$ Incitti associates a certain \emph{covering transformation}
$\covtrans_{(i,j)}(w)$ such that the covering relations in $\involutions(W)$ are all given
by $w\coveredby\covtrans_{(i,j)}(w)$. We find it convenient to decompose $\covtrans_{(i,j)}(w)$
as the product $w\cdot\covop_{(i,j)}(w)$ and call $\covop_{(i,j)}(w)$ the \emph{covering operation}
associated to the rise $(i,j)$. The $\covop_{(i,j)}(w)$ are then given by 
\begin{equation}
\begin{tabular}{l|l}
type & $\covop_{(i,j)}(w)$ \\
\hline
$f\!f$ & $(i,j)$ \\
$fe$ & $(i,j,w(j))$ \\
$ef$ & $(i,j,w(i))$ \\
$ee$ non-crossing \qquad{} & $(i,j)(w(i),w(j))$ \qquad \\
$ee$ crossing & $(i,j,w(j),w(i))$ \\
$ed$ & $(i,j)(w(i),w(j))$
\end{tabular}
\end{equation}
Here we have employed the usual cycle-notation for permutations where, for example, $(i,j,w(i))$
stands for the permutation with $i\rightarrow j\rightarrow w(i)\rightarrow i$.
\begin{thm}[Incitti]
Let $v$, $w$ be involutions with $\celldim{v} = \celldim{w}-1$. 
Then $v< w$ if and only if there is a suitable rise $(i,j)$ of $v$ such that $w=v\cdot \covop_{(i,j)}(v)$.
The pair $(i,j)$ is uniquely determined by $v$ and $w$.
\end{thm}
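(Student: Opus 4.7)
The plan is to break the theorem into three subgoals: showing each suitable rise yields a cover, showing every cover arises this way, and uniqueness. Before proceeding I would verify that $\celldim{w} = \card{\qinversions{w}}$ agrees with Incitti's length function on involutions, so that coverings in our graded poset match his.

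For the ``if'' direction I would treat the six types of suitable rises case by case. Given an involution $v$ and a suitable rise $(i,j)$ of type $\tau$, I would compute $w=v\cdot\covop_{(i,j)}(v)$ by direct cycle multiplication, verify $w^2=1$ by inspection (the restriction to the five listed type patterns is exactly what makes the product an involution), and then enumerate the pairs entering and leaving $\inversions{v}$, passing to the quotient $\qinversions{}$. The freeness condition on the rise is precisely what ensures no intermediate index $k$ with $i<k<j$ contributes spurious inversions, so $\celldim{w}-\celldim{v}=1$. That $v<w$ in the strong Bruhat order of $W$ then follows from the rank-matrix criterion $\card{\setof{k\le p}{v(k)\le q}}\ge \card{\setof{k\le p}{w(k)\le q}}$, which is a short check in each case.

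The ``only if'' direction is the main difficulty. Given a cover $v\coveredby w$ in the involution poset, I would analyze the support $S=\setof{i}{v(i)\ne w(i)}$ of the difference. The involution conditions force $S$ to be closed under both $v$ and $w$, and the equality $\celldim{w}=\celldim{v}+1$ forces $S$ to be small. A case analysis on the cycle structure of the restriction $(w\cdot v^{-1})\vert_S$, combined with the Bruhat inequality, should narrow $S$ to at most four elements of a very restricted shape, each shape matching exactly one of the six templates. In each case $(i,j)$ is then recovered as the minimum of $S$ together with a distinguished partner, which simultaneously yields uniqueness.

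The chief obstacle is ruling out ``almost-suitable'' rises: those of type $dd$, $df$, $fd$, non-free rises, and $ee$ crossings of the wrong orientation. Each such forbidden move must be shown to either fail to produce an involution, fail to change $\celldim$ by exactly one (typically because two inversions are simultaneously created, or one is created and destroyed), or factor through an intermediate involution $v<v'<w$ violating the cover property. Constructing these explicit intermediates in the third case is where the combinatorial heart of the argument lies, and where I expect to lean most heavily on Incitti's original analysis for guidance.
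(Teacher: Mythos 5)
The paper does not reprove this statement at all: its ``proof'' is a citation of Incitti's Theorem 5.1, plus two translation remarks --- that $\celldim{w}=\card{\qinversions{w}}$ coincides with Incitti's rank function $\rho$, and that the pair $(i,j)$ is recovered as his difference- and covering-index. Your preliminary step (checking $\celldim{}$ against Incitti's rank function) is exactly the paper's first remark, and your ``if'' direction --- tracking which pairs enter $\inversions{v}$ and using freeness to see that $\qinversions{}$ grows by exactly one --- is routine and would go through.

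The genuine gap is in the ``only if'' direction. The claim that $v<w$ together with $\celldim{w}=\celldim{v}+1$ ``forces $S$ to be small'' is not a short check: $\celldim{w}=\tfrac12\left(\ell(w)+\operatorname{exc}(w)\right)$ is roughly half the Coxeter length, so adjacent cell dimensions permit $\ell(w)-\ell(v)>1$, and the standard fact that a length-one Bruhat cover is multiplication by a single reflection does not apply. Moreover the order here is the Bruhat order restricted to involutions, so a chain from $v$ to $w$ in $W$ may pass through non-involutions, and one must also know that $\celldim{}$ really is a rank function for the induced poset before ``adjacent dimension and comparable'' can be converted into a covering relation. Bounding the support of $v^{-1}w$ to at most four indices of one of the six listed shapes, and excluding the $dd$, $df$, $fd$, non-free and wrongly-oriented cases, is precisely the content of Incitti's theorem and occupies most of his paper; your outline defers to him for exactly this step (``I expect to lean most heavily on Incitti's original analysis''). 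So either you cite Incitti for this direction --- in which case your proposal reduces to the paper's own proof --- or you owe the full combinatorial argument, which the outline does not supply.
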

\begin{proof}
This is essentially Theorem 5.1 in \cite{MR2106960}. Our cell dimension $\celldim{w}$ is easily 
seen to coincide with the \emph{rank function} $\rho(w)$ of Incitti's Theorem 5.2.
The pair $(i,j)$ can be recovered from $(v,w)$ as the difference- and covering-index
$(di,ci)$, as in \incitti{Section 4}.
\end{proof}

We can now state our signed refinement of Incitti's theorem:
\begin{thm}\label{l:signdet}
Let $\tilde v,\tilde w\in \tilde W$ be signed involutions with 
$\tilde v\in \overline{\bcell_{\tilde w}}$ and $\celldim{\tilde v} = \celldim{\tilde w}-1$.
Write $v$, $w$ for the underlying permutations of $\tilde v, \tilde w$
and let $i<j$ be the suitable rise of $v$ such that $w = v \cdot \covop_{(i,j)}(v)$. 
Then $\tilde w = \tilde v \cdot \covop_{(i,j)}(v) \cdot D$
where $D\in \{\pm1\}^n$ is a matrix of signs and 
the \emph{signed covering operation} 
$\tilde\covop := \covop_{(i,j)}(v) \cdot D = \tilde v\tilde w$
matches the pattern described in Table \ref{tab:covrel}.
\end{thm}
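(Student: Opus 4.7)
The plan is as follows. Since $\tilde v$ and $\tilde w$ are signed involutions, their underlying permutations $v,w\in W$ are honest involutions, and the cell dimension $\celldim{\cdot}$ agrees with Incitti's rank function (as already observed in the proof of the previous theorem). Thus Incitti's theorem produces a unique suitable rise $(i,j)$ with $w=v\cdot\covop_{(i,j)}(v)$. Because $v$ is an involution, the underlying permutation of $\tilde v\tilde w$ is $vw=\covop_{(i,j)}(v)$, and so $\tilde v\tilde w=\covop_{(i,j)}(v)\cdot D$ for a uniquely determined diagonal sign matrix $D\in\{\pm1\}^n$. This already gives the form of $\tilde\covop$; the work is in pinning down $D$.

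Two kinds of constraints cut down the possibilities for $D$. The algebraic constraint is that both $\tilde v^2=1$ and $\tilde w^2=1$, which force the sign diagonals $\epsilon^v,\epsilon^w$ to be constant on the orbits of $v$ and of $w$ respectively. Writing $D$ in terms of $\epsilon^v$, $\epsilon^w$, and the cycle structure of $\covop_{(i,j)}(v)$, one sees that $D$ is completely determined by a small number of signs concentrated around the indices $\{i,j,v(i),v(j)\}$, and the algebraic constraints already force $D$ to be the identity outside this set.

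The remaining freedom is pinned down by a geometric constraint, which is where Theorem~\ref{thm:kosdecomp} enters. Applied to the codimension-one incidence $\tilde v\in\overline{\bcell_{\tilde w}}$, it provides a local product decomposition of a neighbourhood of $\tilde v$ in $\bcell_{\tilde w}$ whose second factor is the generalized Richardson variety $\xcell_{\tilde w}^{\tilde v}=\Grassm\cap B\os\tilde v B\cap B\tilde w B$. As announced in the outline of this section, I would show that for codimension-one $\tilde v\coveredby\tilde w$ this Richardson variety is a circle in $\Grassm$ passing through $\tilde v$, $\tilde w$ and two sibling centres $\tilde v'$, $\tilde w'$. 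For each of Incitti's rise types ($f\!f$, $fe$, $ef$, non-crossing $ee$, crossing $ee$, $ed$) I would produce an explicit one-parameter family of signed involutions realising this circle; for example, in the $f\!f$-case a rotation in the plane $\RR\{e_i,e_j\}$ joining $\tilde v$ to its $(i,j)$-rotated partner. Reading off the $B^+\tilde w B^+$ stratum of a generic point of this family exhibits the signed involution $\tilde w$ explicitly, hence $D$.

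The main obstacle will be the case analysis. Each rise type demands its own explicit parametrization, and the two cases in which $\covop_{(i,j)}(v)$ is a $4$-cycle (crossing $ee$ and $ed$) require careful bookkeeping of how the signs on $v(i),v(j)$ interact with those on $i,j$ as the rotation parameter sweeps the circle. Once the parametrizations are in hand, however, matching the resulting signed permutations against the sign patterns in Table~\ref{tab:covrel} is a direct verification requiring no further structural ideas.
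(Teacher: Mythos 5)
Your overall strategy --- reduce to Incitti's unique suitable rise, use Theorem \ref{thm:kosdecomp} to translate $\tilde v\in \overline{\bcell_{\tilde w}}$ into non-emptiness of the Richardson variety $\xcell_{\tilde w}^{\tilde v}$ near $\tilde v$, then compute that variety as a circle through $\tilde v,\tilde w,\tilde v',\tilde w'$ for each rise type and read off the admissible signs --- is exactly the route the paper takes (Figure \ref{tab:klmatrix} and the surrounding discussion). However, one step of your argument is genuinely false: the claim that ``the algebraic constraints already force $D$ to be the identity outside'' the set $\{i,j,v(i),v(j)\}$. Writing $\tilde v(e_k)=\epsilon_k e_{v(k)}$ and $\tilde w(e_k)=\delta_k e_{w(k)}$, the conditions $\tilde v^2=\tilde w^2=1$ only make $\epsilon$ constant on $v$-orbits and $\delta$ constant on $w$-orbits; for $k$ outside the support of the covering operation these orbits coincide, but the two signs are a priori independent, so $D_k=\delta_k\epsilon_k$ can perfectly well equal $-1$ for an algebraically legitimate pair. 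Concretely, take $v=\id$, $w=(12)$ in $\RR^3$, $\tilde v=\diag(1,1,-1)$ and $\tilde w$ the unsigned transposition matrix: both are signed involutions, $v\coveredby w$, yet $D_3=-1$ --- and indeed, in Figure \ref{fig:rp2example} the point $R=\diag(1,1,-1)$ does \emph{not} lie in the closure of the cells $\gamma,\delta$. The triviality of $\tilde\covop$ outside $\{i,j,v(i),v(j)\}$ is itself a geometric consequence of $\tilde v\in\overline{\bcell_{\tilde w}}$ and must come out of the Richardson-variety analysis in the full ambient $\RR^n$ (every involution of $\Grassm\cap B\os vB\cap BwB$ agrees with $\tilde v$ outside the model block). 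Since you propose to carry out the geometric computation only in the model spaces $\RR^d$, $d=2,3,4$, this part of the statement is left unproved in your write-up.

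A second, smaller caveat: exhibiting an explicit one-parameter family of involutions joining $\tilde v,\tilde w,\tilde v',\tilde w'$ only shows that those sign combinations \emph{do} occur; to conclude that \emph{only} they occur you must show the family exhausts $X_w^v\cap O(n)\cap\Invol$. The paper obtains this completeness from the Gr\"obner-basis description of the Kazhdan--Lusztig ideal (the matrices $Z_{a,b,c,d}$ of Figure \ref{tab:klmatrix}); your proof needs the analogous step, not merely the verification that your parametrized circle lies inside the variety.
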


\begin{example}
%\begin{wrapfigure}{r}{1cm}
%\begin{small}
%\begin{equation}
%\begin{tikzpicture}[scale=0.85]
%\draw (0,0) circle [radius=1];
%\draw[fill] (-1,0) node[left] {$\bcell_{\tilde w_1}$};
%\draw[fill] (+1,0) node[right] {$\bcell_{\tilde w_2}$};
%\draw[fill] (0,1) circle [radius=2pt] node[above] {$\tilde v_1$};
%\draw[fill] (0,-1) circle [radius=2pt] node[below] {$\tilde v_2$};
%\end{tikzpicture}
%\end{equation}
%\end{small}
%\end{wrapfigure}
The simplest kind of covering relation is an $f\!f$-rise $\tilde v\coveredby \tilde w$ where 
$\tilde v$, $\tilde w$ contain respective submatrices
$$V=\begin{pmatrix}\pm 1&\\&\pm 1\end{pmatrix}, \quad 
W=\begin{pmatrix}& \pm 1\\\pm 1&\end{pmatrix}.$$
The assertion of Theorem \ref{l:signdet} in this case is that of the $4\times 4$ conceivable sign combinations
only the $4$ along the circle 
%{\small $\begin{pmatrix*}[r]c&s\\s&-c\end{pmatrix*}$} 
$\left(\begin{smallmatrix}c&\hphantom{+}s\\s&-c\end{smallmatrix}\right)$
with $c^2+s^2=1$ actually occur.
\end{example}

\newcommand{\covtabhdr}{covering operation $\widetilde\covop$\,\,}

\begin{table}
\begin{center}
\begin{tabular}{|l||c||c||c|}
\hline
\rotatebox{90}{\covtabhdr} 
&
$\begin{matpic}{labeltwo=i,labelthree=j,postwo=1.5,posthree=3.5,hdr={$f\!f$-rise}}
\node at (\mpostwo,5-\mposthree) {\scriptsize $+\alpha$};
\node at (\mposthree,5-\mpostwo) {\scriptsize $-\alpha$};
\end{matpic}$
& 
$\begin{matpic}{labeltwo=i,labelthree=j,labelfour=v(j),postwo=1,posthree=2.5,posfour=4,hdr={$fe$-rise}}
\node at (\mposthree,5-\mposfour) {\scriptsize $\alpha$};
\node at (\mpostwo,5-\mposthree) {\scriptsize $\beta$};
\node at (\mposfour,5-\mpostwo) {\scriptsize $\alpha\beta$};
\end{matpic}$
&
$\begin{matpic}{labeltwo=i,labelthree=v(i),labelfour=j,postwo=1,posthree=2.5,posfour=4,hdr={$ef$-rise}}
\node at (\mpostwo,5-\mposfour) {\scriptsize $\alpha$};
\node at (\mposthree,5-\mpostwo) {\scriptsize $\alpha\beta$};
\node at (\mposfour,5-\mposthree) {\scriptsize $\beta$};
\end{matpic}$ \\
\hline
\rotatebox{90}{\covtabhdr} 
&
$\begin{matpic}{labelone=i,labeltwo=j,labelthree=v(i),labelfour=v(j),hdr={$ee$-rise, non-crossing}}
\node at (\mpostwo,5-\mposone) {\scriptsize $-\beta$};
\node at (\mposone,5-\mpostwo) {\scriptsize $+\beta$};
\node at (\mposthree,5-\mposfour) {\scriptsize $+\alpha$};
\node at (\mposfour,5-\mposthree) {\scriptsize $-\alpha$};
\end{matpic}$
&
$\begin{matpic}{labelone=i,labeltwo=v(i),labelthree=j,labelfour=v(j),hdr={$ee$-rise, crossing}}
\node at (\mpostwo,5-\mposone) {\scriptsize $-\alpha\beta\gamma$};
\node at (\mposone,5-\mposthree) {\scriptsize $\gamma$};
\node at (\mposfour,5-\mpostwo) {\scriptsize $\beta$};
\node at (\mposthree,5-\mposfour) {\scriptsize $\alpha$};
\end{matpic}$
&
$\begin{matpic}{labelone=i,labeltwo=v(i),labelthree=v(j),labelfour=j,hdr={$ed$-rise},postwo=1.9,posthree=3.1}
\node at (\mposone,5-\mposfour) {\scriptsize $+\alpha$};
\node at (\mpostwo,5-\mposthree) {\scriptsize $+\beta$};
\node at (\mposthree,5-\mpostwo) {\scriptsize $-\beta$};
\node at (\mposfour,5-\mposone) {\scriptsize $-\alpha$};
\end{matpic}$\\
\hline
\end{tabular}
\end{center}
\caption{Covering relations $\tilde v\coveredby \tilde w$ in $\involutions\left(\tilde W\right)$.
\label{tab:covrel}
The table assumes a suitable rise $(i,j)$ of $\tilde v$
and shows the associated signed covering operation $\widetilde\covop_{(i,j)} = \tilde v\cdot\tilde w$
with the pattern of possible signs $\alpha$, $\beta$, $\gamma$.}
\end{table}

\end{subsection}

%%%%%%%%%%%%%%%%%%%%%%%%%%%%%%%%%%%%%%%%%%%%%%%%%%%%%%%%%%%%%%%%%%%%%%
%%%%%%%%%%%%%%%%%%%%%%%%%%%%%%%%%%%%%%%%%%%%%%%%%%%%%%%%%%%%%%%%%%%%%%
%%%%%%%%%%%%%%%%%%%%%%%%%%%%%%%%%%%%%%%%%%%%%%%%%%%%%%%%%%%%%%%%%%%%%%
%%%%%%%%%%%%%%%%%%%%%%%%%%%%%%%%%%%%%%%%%%%%%%%%%%%%%%%%%%%%%%%%%%%%%%

\begin{subsection}{The Richardson varieties $\xcell_w^v$}

We now assume a covering relation $\tilde v\coveredby \tilde w$ and want to determine 
$$\xcell_{\tilde w}^{\tilde v} = \Grassm \cap B\os v B \cap B\tilde w B.$$
Starting point is the computation of the corresponding classical Richardson variety 
$X_w^v = B\os vB \cap BwB$ as a subspace of $G/B$.
We first recall from \cite[Theorem 3.1]{MR2761698}
that %the Richardson variety 
$X_w^v$ only depends on the "interval" $[w,v]$
in the sense that a "pattern embedding" $[x,y] \cong [w,v]$ induces an isomorphism
$X_x^y \cong X_w^v$. 
Incitti's theorem provides us for every rise type with just such 
a pattern embedding from the "model spaces"
$\RR\{e_i,e_j,e_{v(i)},e_{v(j)}\}$ into $\RR^n$.
It follows that the structure of the $X_w^v$ only depends on the rise type.
Furthermore we can compute the $X_w^v$ by just looking at the simplest occurance 
on an $\RR^d$ with $d=2,3,4$.

We follow the general procedure for the computation of a Gröbner basis
for Kazhdan-Lusztig ideals as described in \cite[section 2.2]{MR2956258}
or \cite{MR1154177}.
As in \cite[Theorem 2.1]{MR2956258} this amounts to a determination of 
the {\em essential set} (which gives the generators of a polynomial ring
$P=\RR[a,b,\ldots]$) and the set of {\em essential minors} (which span the definining ideal
$I\subset P$ of $X_w^v = \Spec P/I$).
We have carried this out in Figure \ref{tab:klmatrix} 
for the various $v\coveredby w$. 
In each case we get a description 
$$X_w^v = \setof{Z_{a,b,c,d}\cdot R}{R\in B}$$
for a certain family $Z_{a,b,c,d}$ of matrices. Let 
$$Z_{a,b,c,d} = Q_{a,b,c,d}\cdot R',\qquad Q_{a,b,c,d}\in O(n),\, R'\in B^+$$
be the QR-decomposition. Then
clearly $X_w^v \cap O(n) = \{Q_{a,b,c,d}\}$
and
\begin{equation}
\xcell_w^v = X_w^v \cap O(n) \cap \Invol = \setof{Q_{a,b,c,d}}{Q_{a,b,c,d}^2=1}.
\end{equation}

\begin{thm}
The involutions $Q_{a,b,c,d}\in \xcell_w^v$ are as described in the right column of Figure \ref{tab:klmatrix}. 
\end{thm}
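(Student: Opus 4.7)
The plan is a direct case analysis over the six types of suitable rises from Table \ref{tab:covrel}: $f\!f$, $fe$, $ef$, $ee$-non-crossing, $ee$-crossing, and $ed$. By the pattern-embedding isomorphism $X_x^y \cong X_w^v$ used on intervals $[w,v]$ (\cite[Theorem 3.1]{MR2761698}), the entire computation can be carried out in the model space $\RR\{e_i,e_j,e_{v(i)},e_{v(j)}\}$ of dimension $d\in\{2,3,4\}$. Outside these coordinates the matrix is diagonal and unchanged by the QR step, so nothing interesting happens there. Hence each case reduces to an explicit, finite-dimensional calculation on a matrix of size at most $4\times 4$.

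For a fixed rise type I would proceed as follows. First, take the family $Z_{a,b,c,d}$ read off from the left column of Figure \ref{tab:klmatrix} (whose parameters are precisely the free variables of the essential set, in the sense of \cite[Theorem 2.1]{MR2956258}). Second, perform the QR-decomposition $Z_{a,b,c,d}=Q_{a,b,c,d}R'$ by Gram--Schmidt on the columns; since $Z$ is a low-rank deformation of the center $\tilde v$ within the model block, the orthonormalization introduces explicit denominators of the form $\sqrt{1+a^2}$, $\sqrt{1+a^2+b^2}$, etc., and one obtains $Q_{a,b,c,d}$ as a smooth function of $(a,b,c,d)$. Third, impose $Q_{a,b,c,d}^2=1$. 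The resulting polynomial conditions on the parameters cut out a one-dimensional locus (geometrically a circle, or a product of circles in the $ee$-non-crossing case), and reparameterizing this locus by the trigonometric substitutions $c_i = \cos\theta_i$, $s_i = \sin\theta_i$ gives exactly the sign-decorated circular pattern depicted in the right column. The signs $\alpha$, $\beta$, $\gamma$ appear naturally as the choices of branch of these square roots in the QR step, which explains why they always occur in a product pattern consistent with $\tilde v \tilde w$ being a signed involution.

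The main obstacle is the $ee$-crossing case. It lives on the full $4\times 4$ model and is the only case where three independent sign parameters $\alpha,\beta,\gamma$ appear, coupled by the nontrivial relation $-\alpha\beta\gamma$ in Table \ref{tab:covrel}. Here the Gram--Schmidt cannot be shortened by any symmetry of the ambient block, so one must carry the QR decomposition through in its entirety; solving $Q^2=1$ then produces a system in four parameters whose solution set must be matched with the tabulated circle. The other five types reduce to either a $2\times 2$ computation ($f\!f$) or a $3\times 3$ computation ($fe$, $ef$, $ed$, and, after exploiting the block decomposition, $ee$-non-crossing), where one or two applications of Gram--Schmidt suffice to read off $Q_{a,b,c,d}$ and the involution condition is immediate. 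In every case the verification that the parametrized circle matches the entries of Figure \ref{tab:klmatrix} is routine once the QR formula is in hand.
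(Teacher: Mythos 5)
Your plan---reduce to the model interval via pattern embedding, read off $Z_{a,b,c,d}$ from the essential minors, orthonormalize, impose the involution condition---is exactly the computation the paper has in mind (and delegates to Sage). But one step, as you literally state it, would fail. If $Q_{a,b,c,d}$ denotes the unique orthogonal factor with $R'\in B^+$, then ``impose $Q_{a,b,c,d}^2=1$'' gives the wrong answer: already in the $f\!f$ case Gram--Schmidt yields $Q_a=\frac{1}{\sqrt{1+a^2}}\left(\begin{smallmatrix}1&-a\\a&\hphantom{-}1\end{smallmatrix}\right)$, a rotation which is an involution only at $a=0$, so you would find $\xcell_w^v=\{\tilde v\}$ rather than the circle $\left(\begin{smallmatrix}c&s\\s&-c\end{smallmatrix}\right)$. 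The point is that the orthogonal representatives of the coset $Z_{a,b,c,d}B$ form the set $Q_{a,b,c,d}\cdot\diag(\pm1)$, and the involution condition must be imposed on $Q_{a,b,c,d}D$ jointly in the continuous parameters and the diagonal sign matrix $D$. The discrete signs in the right column of Figure \ref{tab:klmatrix} come from this choice of $D$, not from ``branches of the square roots in the QR step''---those roots are pinned down positive by the $R'\in B^+$ convention.

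A second, smaller error: your parenthetical claim that the $ee$ non-crossing locus is ``a product of circles'' contradicts the very statement you are proving. The tabulated answer is a single circle, and indeed the symmetry of $Q_{a,b}D$ forces $b=\pm a$ together with a compatible $D$, collapsing the two-parameter family to one dimension; every codimension-one Richardson variety here is one-dimensional. With these two corrections the case-by-case verification proceeds as you describe (and, as the paper notes for the crossing $ee$ case, is most comfortably left to a computer algebra system).
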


We leave the proof as an exercise to the reader. The computation of the $Q_{a,b,c,d}$ and the identification of the 
involutions among them can easily be done in Sage, for example.

%$$
%\begin{tikzpicture} % "THE GLOBE" showcase
%\def\R{2.5} % sphere radius
%\def\angEl{35} % elevation angle
%\filldraw[ball color=white] (0,0) circle (\R);
%\foreach \t in {-80,-60,...,80} { \DrawLatitudeCircle[\R]{\t} }
%\foreach \t in {-5,-35,...,-175} { \DrawLongitudeCircle[\R]{\t} }

%\filldraw[fill=black, draw=black] (\R,0,0) circle (0.1) node {r00};
%\filldraw[fill=black, draw=black] (0,\R,0) circle (0.1) node {w};
%\filldraw[fill=black, draw=black] (0,-\R,0) circle (0.1) node {w'};

%\end{tikzpicture}
%$$

\newcommand{\simpcrossfig}{%
\begin{wrapfigure}{r}{0.4\textwidth} 
  \centering
% random orthogonal matrix
%[-0.45240233399384144  -0.7754879850219883  0.44039813042684567]
%[  0.5897222573416534  -0.6305789871656438  -0.5045768525616449]
%[  0.6689990937159702  0.03144083382620877   0.7425979306297072]
\begin{tikzpicture}[scale=1.8,
declare function = {
    x(\X,\Y,\Z) = -0.45240233399384144*\X  -0.7754879850219883*\Y  +0.44039813042684567*\Z;
    z(\X,\Y,\Z) = 0.5897222573416534*\X  -0.6305789871656438*\Y  -0.5045768525616449*\Z;
    y(\X,\Y,\Z) = 0.6689990937159702*\X  +0.03144083382620877*\Y   +0.7425979306297072*\Z;
    f(\e) = \e*sqrt((1-\e*\e)/(1+\e*\e));
    g(\e) = sqrt(1-\e*\e-pow(f(\e),2));
},]
\tikzset{every pin edge/.style={draw=black}}
\filldraw[fill=white, ball color=white] (0,0) circle (1);
\draw[scale=1,domain=-1:0.1,variable=\e,samples=\ifdraft{20}{\highresticks},black] plot ({x(+\e,+f(\e),+g(\e))},{y(+\e,+f(\e),+g(\e))});
%\draw[scale=1,domain=-1:0.1,variable=\e,samples=\ifdraft{20}{\highresticks},black] plot ({x(+\e,-f(\e),+g(\e))},{y(+\e,-f(\e),+g(\e))});
%\draw[scale=1,domain=-1:0.1,variable=\e,samples=\ifdraft{20}{\highresticks},black] plot ({x(-\e,+f(\e),+g(\e))},{y(-\e,+f(\e),+g(\e))});
\draw[scale=1,domain=-1:0.1,variable=\e,samples=\ifdraft{20}{\highresticks},black] plot ({x(-\e,-f(\e),+g(\e))},{y(-\e,-f(\e),+g(\e))});
%\draw[scale=1,domain=-1:0.1,variable=\e,samples=\ifdraft{20}{\highresticks},black] plot ({x(+\e,+f(\e),-g(\e))},{y(+\e,+f(\e),-g(\e))});
\draw[scale=1,domain=-1:0.1,variable=\e,samples=\ifdraft{20}{\highresticks},black] plot ({x(+\e,-f(\e),-g(\e))},{y(+\e,-f(\e),-g(\e))});
\draw[scale=1,domain=-1:0.1,variable=\e,samples=\ifdraft{20}{\highresticks},black] plot ({x(-\e,+f(\e),-g(\e))},{y(-\e,+f(\e),-g(\e))});
%\draw[scale=1,domain=-1:0.1,variable=\e,samples=\ifdraft{20}{\highresticks},black] plot ({x(-\e,-f(\e),-g(\e))},{y(-\e,-f(\e),-g(\e))});
\filldraw[fill=black, draw=black] ({x(0,0,1)},{y(0,0,1)}) node [pin={[pin distance=0.7em]30:$w$}] {$\bullet$};
\filldraw[fill=black, draw=black] ({x(0,0,-1)},{y(0,0,-1)}) circle (0.02) node [pin={[pin distance=0.7em]200:$w'$}] {$\bullet$};
\filldraw[fill=black, draw=black] ({x(1,0,0)},{y(1,0,0)}) circle (0.02) node [pin={[pin distance=0.7em]130:$v$}] {$\bullet$};
\filldraw[fill=black, draw=black] ({x(-1,0,0)},{y(-1,0,0)}) circle (0.02) node [pin={[pin distance=0.7em]330:$v'$}] {$\bullet$};
\end{tikzpicture}
%\caption{Picture of the crossing $ee$-case.}
%\label{fig:simplecross}
\end{wrapfigure}}

%\begin{tikzpicture}[scale=2.5]
%\begin{scope}
%%\clip(-1,-1) rectangle(1,1);
%\draw[scale=1,domain=-1:0.1,variable=\b,samples=\ifdraft{20}{\highresticks},black] plot ({+\b},{+\b*sqrt((1-\b*\b)/(1+\b*\b))});
%\draw[scale=1,domain=-1:0.1,variable=\b,samples=\ifdraft{20}{\highresticks},black] plot ({+\b},{-\b*sqrt((1-\b*\b)/(1+\b*\b))});
%\draw[scale=1,domain=-1:0.1,variable=\b,samples=\ifdraft{20}{\highresticks},black] plot ({-\b},{+\b*sqrt((1-\b*\b)/(1+\b*\b))});
%\draw[scale=1,domain=-1:0.1,variable=\b,samples=\ifdraft{20}{\highresticks},black] plot ({-\b},{-\b*sqrt((1-\b*\b)/(1+\b*\b))});
%\node[below] at (0,-0.5) {(crossing $ee$-case)};
%\end{scope}
%\end{tikzpicture}
\simpcrossfig
\stepcounter{thm}
Definition \thethm. 
%\label{def:efgvars}
\manuallabel{def:efgvars}{\thethm}
It turns out that in all cases, 
except the crossing $ee$-variant, the points 
of $\xcell_w^v$ can be parametrized by the circle
$s^2+c^2=1$ (and a discrete choice of signs).
In the case of a crossing $ee$-rise the variety can be parametrized by
variables $(e,f,g)$ in the unit sphere $S^2$ 
subject to the conditions
 $f =\pm \sqrt{\frac{1-e^2}{1+e^2}}\cdot e$
and $f=\epsilon\delta eg$
where $\epsilon,\delta\in\{\pm1\}$ are two additional signs. 
The curve
connects the four involutions
$v,v'=(\pm1,0,0)$ and $w,w'=(0,0,\pm1)$ as shown in the picture on the right.

%\vspace{2em}
%One can see that the boundary points $v,v'$ of $\bcell_w$ are connected to the center $w$ of the cell
%by two paths each,
%and also, again by two paths, to the center of the sibling cell $\bcell_{w'}$. 

% Figure \ref{fig:simplecross}). 
%It looks as follows:

%Lakshmibai, V. and Seshadri

%\newcommand{\mpcirc}[2]{\node at (#1,5-#2) {\scriptsize $\circ$}}
%\newcommand{\mpbull}[2]{\node at (#1,5-#2) {\scriptsize $\bullet$}}
\newcommand{\mpcirc}[2]{\filldraw[fill=white, draw=black] (#1,5-#2) circle (0.1)}
\newcommand{\mpbull}[2]{\filldraw[fill=black, draw=black] (#1,5-#2) circle (0.1)}
\newcommand{\mppath}{\path [draw=black, fill=black!10!white, line width=0.8pt]}
\newlength\stextwidth
\newcommand{\mpvcenter}[1]{\raisebox{-0.5\height}{#1}}
\newcommand{\hdrcolone}{ %
   \tikz[scale=0.5]{\filldraw[fill=black, draw=black] (0,0) circle (0.1)} $=$ $\tilde v$, \, 
   \tikz[scale=0.5]{\filldraw[fill=white, draw=black] (0,0) circle (0.1)} $=$ $\tilde w$ %
}
\def\imagetop#1{\settowidth{\stextwidth}{#1}%
\parbox[c][\height]{\stextwidth}{#1}}
\begin{figure}
\begin{tabular}{|c|>{\centering\arraybackslash} p{3.5cm} <{}|>{\centering\arraybackslash} p{3.6cm} <{}|}
\hline
\hdrcolone & $Z_{a,b,c,d}$ & $\xcell_w^v$ \\
\hhline{|=|=|=|}
\imagetop{$\begin{matpic}{labeltwo=i,labelthree=j,postwo=1.5,posthree=3.5,hdr={$f\!f$-rise}}
\mppath (\mpostwo,5-\mposthree) -- (\mpostwo,5-\mpostwo) -- (\mposthree,5-\mpostwo) -- (\mposthree,5-\mposthree) -- cycle;
\mpcirc{\mpostwo}{\mposthree};
\mpcirc{\mposthree}{\mpostwo};
\mpbull{\mpostwo}{\mpostwo};
\mpbull{\mposthree}{\mposthree};
\end{matpic}$}
&
\imagetop{$\begin{pmatrix*}[r]1&\hphantom{\quad}\\a&1\end{pmatrix*}$}
& \imagetop{$\begin{pmatrix*}[r]c&s\\s&-c\end{pmatrix*}$}
\\
\hline
\imagetop{$\begin{matpic}{labelone=i,labeltwo=j,labelthree=v(j),posone=1,postwo=2.5,posthree=4,hdr={$fe$-rise}}
\mppath (\mposone,5-\mposone) -- (\mposone,5-\mposthree) -- (\mpostwo,5-\mposthree) 
  -- (\mpostwo,5-\mpostwo) -- (\mposthree,5-\mpostwo) -- (\mposthree,5-\mposone) -- cycle;
\mpbull{\mposone}{\mposone};
\mpbull{\mpostwo}{\mposthree};
\mpbull{\mposthree}{\mpostwo};
\mpcirc{\mposone}{\mposthree};
\mpcirc{\mpostwo}{\mpostwo};
\mpcirc{\mposthree}{\mposone};
\end{matpic}$}
&
\imagetop{$\begin{pmatrix*}[r]1&\hphantom{\quad}&\hphantom{\quad}\\a&&1\\b&1&\end{pmatrix*}$}
& \imagetop{$\pm\begin{pmatrix*}[r]c^2 & sc & s\\sc&s^2&-c\\s&-c&\end{pmatrix*}$}
\\
\hline
\imagetop{$\begin{matpic}{labelone=i,labeltwo=v(i),labelthree=j,posone=1,postwo=2.5,posthree=4,hdr={$ef$-rise}}
\mppath (\mposone,5-\mpostwo) -- (\mposone,5-\mposthree) -- (\mposthree,5-\mposthree)
   -- (\mposthree,5-\mposone) -- (\mpostwo,5-\mposone) -- (\mpostwo,5-\mpostwo) -- cycle;
\mpbull{\mposone}{\mpostwo};
\mpbull{\mpostwo}{\mposone};
\mpbull{\mposthree}{\mposthree};
\mpcirc{\mposone}{\mposthree};
\mpcirc{\mpostwo}{\mpostwo};
\mpcirc{\mposthree}{\mposone};
\end{matpic}$}
&
\imagetop{$\begin{pmatrix*}[r]&1&\hphantom{\quad}\\1&\hphantom{\quad}&\\a&b&1\end{pmatrix*}$}
& \imagetop{$\pm\begin{pmatrix*}[r]&-c&s\\-c&s^2&sc\\s&sc&c^2\end{pmatrix*}$}
\\ \hline
%\end{array}$$
\end{tabular}
\caption{Determination of the $X_w^v$ and $\xcell_w^v$ (continued below).}
\end{figure}
\begin{figure}\ContinuedFloat
\begin{tabular}{|c|>{\centering\arraybackslash} p{3.5cm} <{}|>{\centering\arraybackslash} p{3.6cm} <{}|}
\hline
\hdrcolone & $Z_{a,b,c,d}$ & $\xcell_w^v$ \\
\hhline{|=|=|=|}
\imagetop{$\begin{matpic}{labelone=i,labeltwo=j,labelthree=v(i),labelfour=v(j),hdr={$ee$-rise, non-crossing}}
\mppath (\mposone,5-\mposthree) -- (\mposone,5-\mposfour) -- (\mpostwo,5-\mposfour) -- (\mpostwo,5-\mposthree) -- cycle;
\mppath (\mposthree,5-\mposone) -- (\mposfour,5-\mposone) -- (\mposfour,5-\mpostwo) -- (\mposthree,5-\mpostwo) -- cycle;
\mpcirc{\mposone}{\mposfour};
\mpcirc{\mpostwo}{\mposthree};
\mpcirc{\mposthree}{\mpostwo};
\mpcirc{\mposfour}{\mposone};
\mpbull{\mposone}{\mposthree};
\mpbull{\mpostwo}{\mposfour};
\mpbull{\mposthree}{\mposone};
\mpbull{\mposfour}{\mpostwo};
\end{matpic}$}
&
\imagetop{$\begin{pmatrix*}[r]\hphantom{\quad}&\hphantom{\quad}&1&\hphantom{\quad}\\&&a&1\\1&&\hphantom{\quad}&\\b&1&&\end{pmatrix*}$}
& \imagetop{$\begin{pmatrix*}[r]&&s&c\\&&c&-s\\s&c&&\\c&-s&&\end{pmatrix*}$}
\\
\hline
\imagetop{$\begin{matpic}{labelone=i,labeltwo=v(i),labelthree=j,labelfour=v(j),hdr={$ee$-rise, crossing}}
\mppath (\mposone,5-\mpostwo) -- (\mposone,5-\mposfour) -- (\mposthree,5-\mposfour) --(\mposthree,5-\mposthree) 
   -- (\mposfour,5-\mposthree) -- (\mposfour,5-\mposone) -- (\mpostwo,5-\mposone) -- (\mpostwo,5-\mpostwo) -- cycle;
\mpbull{\mposone}{\mpostwo};
\mpbull{\mpostwo}{\mposone};
\mpbull{\mposthree}{\mposfour};
\mpbull{\mposfour}{\mposthree};
\mpcirc{\mposone}{\mposfour};
\mpcirc{\mpostwo}{\mpostwo};
\mpcirc{\mposthree}{\mposthree};
\mpcirc{\mposfour}{\mposone};
\end{matpic}$}
&
\imagetop{$\genfrac{}{}{0pt}{}{\begin{pmatrix*}[r]\hphantom{\quad}&1&%
\hphantom{\quad}&\hphantom{\quad}\\1&\hphantom{\quad}&&\\a&b&&1\\c&d&1&%
\end{pmatrix*}}{\vphantom{\int}\text{(with $ad-bc=0$)}}$}
& \imagetop{\begingroup % keep the change local
\setlength\arraycolsep{1pt} $\genfrac{}{}{0pt}{}{\begin{pmatrix*}[r]& e & f & g \\ e & \delta t & -\delta ef & -\epsilon f \\%
f & -\delta ef & -\delta t & \epsilon e \\%
g & -\epsilon f & \epsilon e & \end{pmatrix*}}{\text{(with $t=1-e^2$)}}$ \endgroup }

\\
\hline
\imagetop{$\begin{matpic}{labelone=i,labeltwo=v(i),labelthree=v(j),labelfour=j,hdr={$ed$-rise},postwo=1.9,posthree=3.1}
\mppath (\mposone,5-\mpostwo) -- (\mposone,5-\mposthree) -- (\mpostwo,5-\mposthree) --(\mpostwo,5-\mposfour) -- (\mposthree,5-\mposfour) 
   -- (\mposthree,5-\mposthree) -- (\mposfour,5-\mposthree) -- (\mposfour,5-\mpostwo) -- (\mposthree,5-\mpostwo) 
   -- (\mposthree,5-\mposone) -- (\mpostwo,5-\mposone) -- (\mpostwo,5-\mpostwo) -- cycle;
\mpbull{\mposone}{\mpostwo};
\mpbull{\mpostwo}{\mposone};
\mpbull{\mposthree}{\mposfour};
\mpbull{\mposfour}{\mposthree};
\mpcirc{\mposone}{\mposthree};
\mpcirc{\mpostwo}{\mposfour};
\mpcirc{\mposthree}{\mposone};
\mpcirc{\mposfour}{\mpostwo};
\end{matpic}$}
&
\imagetop{$\genfrac{}{}{0pt}{}{\begin{pmatrix*}[r]\hphantom{\quad}&1&\hphantom{\quad}&\hphantom{\quad}%
\\1&\hphantom{\quad}&&\\a&b&&1\\c&d&1&\end{pmatrix*}}{\vphantom{\int}\text{(with $b=c=0$)}}$}
& \imagetop{$\begin{pmatrix*}[r]& c& s& \\ c & & & \mp s \\ s & & & \pm c \\ & \mp s & \pm c &  \end{pmatrix*}$}
\\ \hline
\end{tabular}
\caption{\label{tab:klmatrix}%
Determination of the $X_w^v$ and $\xcell_w^v$.}\end{figure}

\end{subsection}

%%%%%%%%%%%%%%%%%%%%%%%%%%%%%%%%%%%%%%%%%%%%%%%%%%%%%%%%%%%%%%%%%%%%%%
%%%%%%%%%%%%%%%%%%%%%%%%%%%%%%%%%%%%%%%%%%%%%%%%%%%%%%%%%%%%%%%%%%%%%%
%%%%%%%%%%%%%%%%%%%%%%%%%%%%%%%%%%%%%%%%%%%%%%%%%%%%%%%%%%%%%%%%%%%%%%
%%%%%%%%%%%%%%%%%%%%%%%%%%%%%%%%%%%%%%%%%%%%%%%%%%%%%%%%%%%%%%%%%%%%%%

\begin{subsection}{The product decomposition}
\label{sec:proddec}
%\blindtext[3]
We now turn to the proof of Theorem \ref{thm:kosdecomp}. The map in question
\begin{equation}\label{eq:chiproddec2}
\chi: \acell_{\tilde v} \times 
\left(\Grassm \cap B\os v B \right) 
\longrightarrow \Grassm 
\end{equation}
is given by 
$\chi \left( \alpha\os\alpha , g \right)  = \theta \alpha g \alpha^{-1} \theta^{-1}$
where $\theta\os\theta = 1+ \alpha^{-\ast} g \alpha\os \alpha g_t \alpha$.
Using $\theta = \psi\alpha$ and $A=\alpha\os\alpha$ we can write this as
\begin{align}
\chi \left( A , g \right) &= \psi g \psi^{-1}, \qquad \psi\os\psi = A + g A g.
\end{align}

We want to show that $\Grassm \cap B\os v B$ is a manifold of dimension complementary
to $\celldim{\tilde v}$.
Let $\widehat{\rcell_{\tilde v}} = \setof{A=A\os>0}{A^{\tilde v}=A^{-1}}$.
\begin{lemma}\label{l:rnormal}
$\Grassm \cap B\os v B = \setof{b\os vb}{b\in B^+,\, bb\os\in\widehat{\rcell_{\tilde v}}}$.
\end{lemma}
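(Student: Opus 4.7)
The plan is to prove both inclusions separately.

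The inclusion $\supseteq$ is a direct computation. Given $b \in B^+$ with $A := bb\os \in \widehat{\rcell_{\tilde v}}$, set $g := b\os \tilde v b$. Since $\tilde v\os = \tilde v$, we have $g\os = g$, and
\[
g^2 = b\os \tilde v (bb\os) \tilde v b = b\os A^{\tilde v} b = b\os A^{-1} b = 1,
\]
where the middle equality uses $\tilde v A \tilde v = A^{\tilde v}$ and the penultimate one is the defining condition $A^{\tilde v} = A^{-1}$. Thus $g \in \Grassm$, and manifestly $g \in B\os \tilde v B = B\os v B$.

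For the reverse inclusion $\subseteq$, I would start with $g \in \Grassm \cap B\os v B$ and construct $b$ by combining the refined opposite Bruhat decomposition with the symmetry $g = g\os$. Every element of $B\os v B$ admits a unique factorization $g = L \cdot D\tilde v' \cdot U_1$ in which $L$ is unipotent lower triangular, $U_1$ is unipotent upper triangular, $D$ is a positive diagonal, and $\tilde v'$ is a signed permutation lift of $v$. Taking the transpose and using $g = g\os$ yields $g = U_1\os \cdot \tilde v' D \cdot L\os$, which is another factorization of exactly the same type. Applying uniqueness, I would conclude $L = U_1\os$ and $D^{\tilde v'} = D$, so in particular $D$ commutes with $\tilde v'$.

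With that in hand, set $b := D^{1/2} U_1 \in B^+$; because $D$ is $\tilde v'$-invariant, so is $D^{1/2}$, and a short calculation yields
\[
b\os \tilde v' b = U_1\os D^{1/2} \tilde v' D^{1/2} U_1 = U_1\os \tilde v' D U_1 = L \cdot D\tilde v' \cdot U_1 = g.
\]
So $g = b\os \tilde v b$ with $\tilde v := \tilde v'$. Positive definiteness of $A := bb\os$ is automatic, and the identity $g^2 = 1$ unwinds (reversing the computation in $\supseteq$) to $A^{\tilde v} = A^{-1}$, placing $A$ in $\widehat{\rcell_{\tilde v}}$.

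The main obstacle is the uniqueness step: one must set up the refined opposite Bruhat factorization $g = L(D\tilde v')U_1$ carefully enough that the transposed factorization really falls into the same normal form, and that the comparison yields both $L = U_1\os$ and $D^{\tilde v'} = D$. There is also a signature bookkeeping issue, since $\Grassm \cap B\os v B$ a priori meets several signature classes, and one must verify that the signed lift $\tilde v'$ extracted from $g$ coincides with the $\tilde v$ appearing on the right-hand side.
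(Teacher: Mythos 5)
Your $\supseteq$ direction is correct. The gap is in $\subseteq$, at exactly the step you flag as ``the main obstacle'', and it is not a matter of setting up the normal form ``carefully enough'' --- no choice of normal form makes that deduction work. A factorization $g=L\cdot D\tilde v'\cdot U_1$ with \emph{both} unipotent factors unrestricted is never unique: any $y\in U\cap vU\os v$ can be slid across the monomial factor from $U_1$ into $L$. To get uniqueness you must restrict one side, say $L\in U\os\cap vU\os v$ with $U_1$ arbitrary (this is the paper's $g=b\os vc$). But transposing that factorization of a symmetric $g$ yields one normalized on the \emph{other} side ($U_1\os$ lands in the restricted subgroup while $L\os$ is arbitrary), so you are comparing two different normal forms and uniqueness gives you nothing. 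Worse, the desired conclusion is simply false for these normal forms: for the $fe$-rise with $v=(23)$ and
\[
g=\begin{pmatrix}c^2&sc&s\\ sc&s^2&-c\\ s&-c&0\end{pmatrix}\in\Grassm\cap B\os vB,
\]
one computes $l_{21}=u_{12}=s/c$, $l_{31}=u_{13}=s/c^2$, and the single remaining relation $l_{32}+u_{23}=s^2/c$; either normalization forces one of $l_{32},u_{23}$ to vanish, so $L\neq U_1\os$ whenever $s\neq0$.

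A symmetric factorization does exist (above: take $l_{32}=u_{23}=s^2/(2c)$), but producing it is the real content of the lemma, and it is where the paper works harder: starting from the unique $g=b\os vc$ it splits $c=c_{\mathrm{inv}}c_{\mathrm{rise}}$, transposes, renormalizes the result back into the \emph{same} normal form, and obtains $g=b\os v\,d\,b$ where the middle factor satisfies only the twisted symmetry $d^v=d\os$; it then extracts the square root $\sigma$ of $d$ in the unipotent group $U\cap vU\os v$ (binomial series), which inherits the twisted symmetry, and distributes one copy to each side to reach $g=(\sigma b)\os v(\sigma b)$. Your argument needs this square-root (or an equivalent symmetrization) step; without it the central deduction fails. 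The sign-lift bookkeeping you mention is a genuine but secondary issue --- the statement itself is loose about $v$ versus $\tilde v$, and the paper leaves that verification to the reader as well.
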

\begin{proof}
The proof is similar to the proof of Lemma \ref{l:alphadef}.
First note that $B\os v B = w_0Bw_0vB$ is a translated Bruhat cell (where $w_0$ is the maximal involution),
so every $g\in B\os v B$
has a unique representation as $g=b\os v c$ with $b\in U\cap vUv$, $c\in B^+$.
If furthermore $g\in O(n)$ then $c$ is determined by $b$.
Now let $g\in \Grassm \cap B\os v B$.
We decompose $c=c_{\textrm{inv}}\cdot c_{\textrm{rise}}$ where
$c_{\textrm{inv}} \in U\cap vU\os v$, $c_{\textrm{rise}}\in B\cap vBv$.
Taking the conjugate gives
$g = c_{\textrm{rise}}\os v \cdot v c_{\textrm{inv}}\os v \cdot b$.
By uniqueness we have $g = b\os v d b$ where $d=c_{\textrm{inv}}$ satisfies $d^v = d\os$.
In $U\cap vU\os v$ the binomial formula provides us with a square root $s$ of $d$ which also
satisfies $s^v = s\os$. Then
$g = b\os v ss b = b\os vsv vsb = b\os s\os v sb = (sb)\os v (sb)$, as desired.
We leave the remaining verifications to the reader.
\end{proof}

It follows that 
$\Grassm \cap B\os v B \cong \widehat{\rcell_{\tilde v}}/{\sim}$
where the identifications are given by $A\sim \beta A\beta\os$ 
whenever $\beta\in B^+$ with $\beta^v = \beta^{-\ast}$. Let $\rcell_{\tilde v}$ denote the
quotient $\widehat{\rcell_{\tilde v}}/{\sim}$.
We claim that it is a manifold.

To see this, first note that $\langle x,y\rangle_{\tilde v} = \langle \tilde vx,y\rangle$ defines a scalar product
of signature $(p_+,p_-)$ where $p_\pm = \dim E^\pm$ with the $(\pm1)$-eigenspaces 
$E^\pm = \setof{x}{\tilde vx=\pm x}$ of $\tilde v$.
The associated orthogonal group is $O_{\tilde v} = \setof{g\in\Gl_n}{g^{\tilde v}=g^{-1}}$ and one has
$$\widehat{\rcell_{\tilde v}} = \bigslant{O_{\tilde v}}{O(E^+)\times O(E^-)}.$$
Let  $H=\setof{\beta\in B^+}{\beta^{\tilde v} = \beta^{-\ast}}$. One has $H = O_{\tilde v} \cap B^+$, so
$$\rcell_{\tilde v} = \bigslant{O_{\tilde v}}{\left(O(E^+)\times O(E^-)\right) \cdot H}$$
It follows that $\rcell_{\tilde v}$ is a manifold, as claimed.

One easily checks that the dimension of $\rcell_{\tilde v}$ is given by the number of equivalence classes 
$$\qrises{v} = \bigslant{\setof{(i,j)}{i<j,\,v(i)<v(j)}}{(i,j)\sim (v(i),v(j))}.$$

\begin{lemma}\label{l:chilocaldiff}
The differential of $\chi$ at $(1,\tilde v)$ is an isomorphism.
\end{lemma}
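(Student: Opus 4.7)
The plan is to compute $d\chi|_{(1, \tilde v)}$ in closed form via two elementary substitutions into the defining formula $\chi(A, g) = \psi g \psi^{-1}$ (with $\psi\os\psi = A + gAg$), and then to reduce the theorem to a transversality claim.

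First, if $g = \tilde v$, the $\tilde v$-invariance $A^{\tilde v} = A$ on $\acell_{\tilde v}$ yields $\psi\os\psi = A + \tilde v A \tilde v = 2A$, so $\psi = \sqrt{2}\,\alpha$ for the Cholesky factor $\alpha$ with $\alpha\os\alpha = A$; the scalar cancels in the conjugation, and one recovers the parameterization of Lemma \ref{l:alphadef}: $\chi(A, \tilde v) = \alpha \tilde v \alpha^{-1}$. Thus $\chi(\cdot, \tilde v)\colon \acell_{\tilde v} \to \bcell_{\tilde v}$ is a diffeomorphism. Second, if $A = 1$, then $\psi\os\psi = 1 + g^2 = 2$ (using $g^2 = 1$), so $\psi = \sqrt{2}\,I$ and $\chi(1, g) = g$; thus $\chi(1, \cdot)$ is simply the inclusion $\Grassm \cap B\os v B \hookrightarrow \Grassm$.

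By linearity of differentials on a product, these two observations combine to give $d\chi|_{(1, \tilde v)}(a, h) = [B, \tilde v] + h$, where $B$ is the unique upper-triangular matrix with $B + B\os = a$. The first summand is the tangent vector to $\bcell_{\tilde v}$ corresponding to $a$, and $a \mapsto [B, \tilde v]$ is a bijection onto $T_{\tilde v}\bcell_{\tilde v}$; the second is the inclusion of $h \in T_{\tilde v}(\Grassm \cap B\os v B)$ into $T_{\tilde v}\Grassm$. A direct check shows $[B, \tilde v] - ([B, \tilde v])\os = [a, \tilde v]$, which vanishes precisely because $a^{\tilde v} = a$; so $[B, \tilde v]$ is symmetric, lying in $T_{\tilde v}\Grassm$ as it must. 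Consequently, $d\chi$ is an isomorphism if and only if $T_{\tilde v}\Grassm = T_{\tilde v}\bcell_{\tilde v} \oplus T_{\tilde v}(\Grassm \cap B\os v B)$, i.e., if and only if the Bruhat cell $\bcell_{\tilde v}$ and the opposite stratum $\Grassm \cap B\os v B$ meet transversally at $\tilde v$.

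The remaining transversality verification is the main obstacle. My approach is a dimension count, $\card{\qinversions{v}} + \dim \rcell_{\tilde v} = \dim \Grassm$, combined with injectivity: if $[B, \tilde v] = -h$, then $h$ comes from the opposite-Bruhat parameterization of Lemma \ref{l:rnormal}, where it sits in a ``lower-triangular'' locus modulo the $\widehat{\rcell_{\tilde v}}$-constraints; on the other hand $[B, \tilde v]$ with $B$ strictly upper triangular (after accounting for the normalization $a_{ii}=0$ and the rise-zero pattern) has the complementary Bruhat support relative to $\tilde v$; matching positions under the uniqueness of Bruhat factorization forces $B = 0$ and hence $h = 0$. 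The technical hurdle is reconciling the $\acell_{\tilde v}$- and $\rcell_{\tilde v}$-parameterizations at the tangent level and carrying the $\tilde v$-invariance, sign, and rise/inversion bookkeeping through the argument.
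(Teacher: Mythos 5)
Your first two paragraphs are correct and reproduce, in a slightly different packaging, exactly what the paper's proof does: the paper runs a dual-number computation with $A_t=1+t(\rho+\rho\os)$ and $g_t=\tilde v+t\tilde v(\eta-\eta\os)$, finds $\psi\os\psi\equiv 2A_t$ so that $\psi=\sqrt2\,\alpha$, and obtains $d\chi(a,h)=\tilde v(\rho\os-\rho)+h$ -- the sum of the two partial differentials, as you say. The reduction to the transversality statement $T_{\tilde v}\Grassm=T_{\tilde v}\bcell_{\tilde v}\oplus T_{\tilde v}\bigl(\Grassm\cap B\os vB\bigr)$ is therefore sound.

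The gap is that this transversality statement \emph{is} the lemma, and you leave it as an acknowledged sketch. Your proposed route (dimension count plus an injectivity argument invoking ``uniqueness of Bruhat factorization'') is not how it closes, and the Bruhat-factorization appeal is a red herring: what is actually needed is a support statement in the Lie algebra. Concretely, parametrize $T_{\tilde v}\Grassm$ as $\setof{\tilde v\phi}{\phi\in\lie{so}_n,\ \tilde v\phi=-\phi\tilde v}$. The image of $T_1\acell_{\tilde v}$ is $\tilde v(\rho\os-\rho)$ where $\rho$ is upper triangular, supported on the \emph{inversions} of $v$, with $\rho^{\tilde v}=\rho\os$; the image of $T_{\tilde v}\bigl(\Grassm\cap B\os vB\bigr)$ (via Lemma \ref{l:rnormal}, $g=b\os vb$ with $b=1+t\eta$) is $\tilde v(\eta-\eta\os)$ where $\eta$ is upper triangular, supported on the \emph{rises} of $v$, with $\eta^{\tilde v}=-\eta$. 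Since every off-diagonal position $(i,j)$, $i<j$, is either a rise or an inversion of $v$, an antisymmetric $\phi$ splits uniquely into its inversion part and its rise part; one then only has to check that the constraint $\tilde v\phi=-\phi\tilde v$ on $\phi$ is equivalent to the pair of constraints $\rho^{\tilde v}=\rho\os$, $\eta^{\tilde v}=-\eta$ on the two parts, which is the ``sign and rise/inversion bookkeeping'' you flag as a hurdle but do not carry out. Once that is done, injectivity and surjectivity are simultaneous and no separate dimension count is needed. So: right reduction, right intuition about complementary supports, but the decisive verification is missing and your proposed mechanism for it (Bruhat factorization uniqueness) is not the one that works.
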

\begin{proof}
Let $t$ be an infinitesimal variable with $t^2=0$.
We consider a first order path $B_t$ in $\rcell_{\tilde v}$ given by $B_t = bb\os = (1+t\eta)(1+t\eta\os)$.
Here $\eta$ is an upper triangular matrix that is supported on the $v$-rises and obeys $\eta^{\tilde v} = -\eta$.
The corresponding path in $\Grassm \cap B\os v B$ is given by
$g_t=b\os v b = v + tv(\eta-\eta\os)$.

Likewise, let the path $A_t$ in $\acell_{\tilde v}$ be represented by 
$A_t = \alpha\os\alpha = (1+t\rho\os)(1+t\rho)= 1+t(\rho + \rho\os)$
where $\rho$ is an upper triangular matrix that is supported on the $v$-inversions
and obeys $\rho^{\tilde v} = \rho\os$. 

One finds $\psi\os\psi = A_t + g_tA_tg_t \equiv 2A_t$, so we get $\psi = \sqrt{2}\alpha$, which gives
$$\chi(A_t,g_t) = (1+t\rho) g_t (1-t\rho) = v + t\cdot v\cdot\left( \eta-\eta\os + \rho\os - \rho \right).$$
The differential thus represents the decomposition of a 
$\phi\in T_{\tilde v}\Grassm = \setof{\phi\in\lie{so}_n}{\tilde v \phi = -\phi\tilde v}$
as the sum of its $v$-inversion part $\rho\os-\rho$ and its $v$-rising part $\eta-\eta\os$. 
\end{proof}

We now know that there is a neighborhood $U\times V\subset \acell_{\tilde v}\times (\Grassm\cap B\os vB)$ that is mapped
diffeomorphically onto a neigborhood $W=\chi(U\times V)\subset\Grassm$. 
Upon intersecting this with the Bruhat cell $BwB$ we obtain the desired isomorphism
$U\times (V\cap \xcell_w^v) \cong W\cap \Grassm \cap \overline{BwB}$.

\newcommand{\rpfl}[1]{%
\draw[scale=1,domain=1:-0.1,variable=\c,samples=\ifdraft{\lowresticks}{\highresticks},black!40!white] plot ({2+2*x(\c,#1)},{2+2*y(\c,#1)});
\draw[scale=1,domain=1:-0.1,variable=\c,samples=\ifdraft{\lowresticks}{\highresticks},black!40!white] plot ({2-2*x(\c,#1)},{2+2*y(\c,#1)});
\draw[scale=1,domain=-1:0.1,variable=\c,samples=\ifdraft{\lowresticks}{\highresticks},black!40!white] plot ({2+2*x(\c,#1)},{2+2*y(\c,#1)});
\draw[scale=1,domain=-1:0.1,variable=\c,samples=\ifdraft{\lowresticks}{\highresticks},black!40!white] plot ({2-2*x(\c,#1)},{2+2*y(\c,#1)});
}
\newcommand{\avarline}[1]{%
\draw[scale=1,domain=-0.9:0.9,variable=\a,samples=\ifdraft{\lowresticks}{\highresticks},black!40!white] plot ({2+2*x3(\a,#1,sqrt(1-(#1*#1))))},{2+2*y3(\a,#1,sqrt(1-(#1*#1))))});
\draw[scale=1,domain=-0.9:0.9,variable=\a,samples=\ifdraft{\lowresticks}{\highresticks},black!40!white] plot ({2-2*x3(\a,#1,sqrt(1-(#1*#1))))},{2+2*y3(\a,#1,sqrt(1-(#1*#1))))});
} 
%pc3(\a,\c,\s) / pcnrm(\a,\c,\s);
%pc2(\a,\c,\s) / pcnrm(\a,\c,\s);
\newcommand{\exrptwofig}{%
\begin{tikzpicture}[framed,scale=0.9,
declare function={ B(\c,\a) = pow(\c,2)/(1+\a*\c*(1-pow(\c,2)));
                   A(\c,\a) = (\c+\a*(1-pow(\c,2))/2)/sqrt(1+\a*\c*(1-pow(\c,2)));
                   X(\b) = -1*sqrt(1-\b)/sqrt(2);
                   Y(\a,\b) = \a*sqrt((1-\b)/(1-pow(\a,2)))/sqrt(2);
                   x(\c,\a) = X(B(\c,\a));
                   y2(\c,\a) = Y(A(\c,\a),B(\c,\a));
                   y(\c,\a) = A(\c,\a)/(sqrt(2) * sqrt(1-(\a*\c+pow(\a,2)*(1-pow(\c,2))/4)/(1+\a*\c)));
                   pf(\a,\c,\s) = (\c+\a*\s*\s/2)/sqrt(1+\a*\c*\s*\s));
                   qf(\a,\c,\s) = \c*\c/(1+\a*\c*\s*\s);
                   pc1(\a,\c,\s) = sqrt(1-pow(pf(\a,\c,\s),2));
                   pc2(\a,\c,\s) = -pf(\a,\c,\s);
                   pc3(\a,\c,\s) = sqrt( (1-pow(pf(\a,\c,\s),2)) * (1+qf(\a,\c,\s))/(1-qf(\a,\c,\s)) - pow(pf(\a,\c,\s),2)) );
                   pcnrm(\a,\c,\s) = sqrt(pow(pc1(\a,\c,\s),2) + pow(pc2(\a,\c,\s),2) + pow(pc3(\a,\c,\s),2)); 
                   x3(\a,\c,\s) = pc1(\a,\c,\s) / pcnrm(\a,\c,\s) ; 
                   y3(\a,\c,\s) = pc2(\a,\c,\s) / pcnrm(\a,\c,\s) ;
                 },
]
\foreach \z in {-0.999,-0.8,-0.6,-0.4,-0.2,0,0.2,0.4,0.6,0.8,0.999} \rpfl{\z};
\foreach \c in {-0.99,-0.9,...,1} \avarline{\c};
\draw (2,2) circle [radius=2];
\draw[thin] (4,2)  ;% node[above right] {$\alpha$};
\draw[thin] (2,4)  ;% node[above left] {$\beta$};
\draw[thin] (0,2)  ;% node[below left] {$\alpha$};
\draw[thin] (2,0)  ;% node [below right] {$\beta$};
\draw[thin] (2,0) -- (2,1) ;% node[right] {$\gamma$}; 
\draw[thin] (2,2) -- (2,3) ;% node[right] {$\delta$};
\draw (2,0) -- (2,4);
\draw[white!30!red, ultra thick] (2,2) circle [radius=sqrt(2)];
\draw[white!60!blue, ultra thick, -round cap] (2,4) -- (2,2);
\draw[fill] (2,3.414213562373095) circle [radius=1pt];
%\node[ellipse,preaction={fill, white, opacity=0.8},right,thin] [draw] at (2.5,3.41421356237309) {$\tilde v$};
\draw[fill] (0.5857864376269051,2) circle [radius=1pt] ;% node[ellipse,left] {$\tilde w$};
\draw[fill] (2,0) circle [radius=1pt]  node[below] {$P$};
\draw[fill] (2,4) circle [radius=1pt]  node[above] {$P$};
\draw[fill] (2,2) circle [radius=1pt]  node[left]  {$R$};
\draw[fill] (4,2) circle [radius=1pt]  node[right] {$Q$};
\draw[fill] (0,2) circle [radius=1pt]  node[left]  {$Q$};
%\draw node at (1,2) {$A$}; 
%\draw node at (3,2) {$B$}; 

\end{tikzpicture}%
}
\newcommand{\exwrfig}{%
\begin{wrapfigure}{r}{0.4\textwidth}
\centering
\exrptwofig
\end{wrapfigure}%
}
 
\begin{example}
\label{ex:rp2flowlines}
The following picture illustrates the map $\chi$ in case of 
the Bruhat decomposition of $\RR P^2$ (compare Figure \ref{fig:rp2example}).
It shows the parallel projection of its universal covering $S^2$ into the $(x,y)$-plane.
We have chosen
\begingroup
\renewcommand*{\arraystretch}{0.7}
\begin{equation}
\tilde v = \begin{pmatrix*}1&&\\&&1\\&1&\end{pmatrix*},
\quad
\tilde w = \begin{pmatrix*}&&1\\&1&\\1&&\end{pmatrix*}.
\end{equation}
\endgroup
The Richardson 
variety $\xcell_w^v$ is
the red circle through $\tilde v$ and $\tilde w$. The cell $\bcell_{\tilde v}$ is indicated in blue. 
The grey lines show the images $\chi(\alpha,\ast)$ and $\chi(\ast,g)$ 
for some constant
values of $A = \alpha\os\alpha =\left(\begin{smallmatrix}1&&\\&1&a\\&a&1\end{smallmatrix}\right)$
and $g\in\xcell_w^v$. 
One can see that $\chi$ is neither surjective nor injective.
\end{example}  % we enc the environment here to avoid problems with wrapfigure
\exwrfig
The underlying computation assumes $g\in\xcell_w^v$ as in the $fe$-case of Figure \ref{tab:klmatrix}. 
With 
%$\gamma = \begin{pmatrix*}1&c&c^2\\&s&cs\\&&s\end{pmatrix*}$
$\gamma = \left(\begin{smallmatrix}1&c&c^2\\&s&cs\\&&s\end{smallmatrix}\right)$
one has $g=\gamma w\gamma^{-1}$.
We determine the $w$-coordinates $C = \beta\os\beta$ of $\chi(A,g) = \psi g\psi^{-1} = \beta w \beta^{-1}$
via
$C = \beta\os\beta = \gamma\os\left( A + gA g\right)\gamma$.
One has $C = 2\cdot\left(\begin{smallmatrix}k&l&m\\l&n&l\\m&l&k\end{smallmatrix}\right)$
with
$k=1+acs^2$, $l=c+\frac12as^2$, $m=c^2$, $n=1$. 
After normalisation we get
\begingroup
\renewcommand*{\arraystretch}{0.7}
$$\beta\os\beta\sim \begin{pmatrix*}1&p&q\\p&1&p\\q&p&1\end{pmatrix*}
\quad\text{with}\quad
p=\frac{c+\frac12as^2}{\sqrt{1+acs^2}},\quad q = \frac{c^2}{1+acs^2}.$$
\endgroup
The Cholesky decomposition gives
$$\beta = \begin{pmatrix*}
1 & p & q \\
0 & \sqrt{-p^{2} + 1} & -\frac{p {\left(q - 1\right)}}{\sqrt{-p^{2} + 1}} \\
0 & 0 & \sqrt{\frac{p^{2} {\left(q - 1\right)}^{2}}{p^{2} - 1} - q^{2} + 1}
\end{pmatrix*},$$
so the represented ray $\RR\cdot \beta(e_1-e_3) \in \RR P^2$ has the homogeneous coordinates
%$$\left[
%\left(\frac{c^{2}}{a c s^{2} + 1} - 1\right):
%\left(-\frac{{\left(a s^{2} + 2 \, c\right)} {\left(\frac{c^{2}}{a c s^{2} + 1} - 1\right)}}{2 \, \sqrt{a c s^{2} + 1} \sqrt{-\frac{{\left(a s^{2} + 2 \, c\right)}^{2}}{4 \, {\left(a c s^{2} + 1\right)}} + 1}}\right) :
%\left(\sqrt{-\frac{c^{4}}{{\left(a c s^{2} + 1\right)}^{2}} + \frac{{\left(a s^{2} + 2 \, c\right)}^{2} {\left(\frac{c^{2}}{a c s^{2} + 1} - 1\right)}^{2}}{{\left(a c s^{2} + 1\right)} {\left(\frac{{\left(a s^{2} + 2 \, c\right)}^{2}}{a c s^{2} + 1} - 4\right)}} + 1}\right)
%\right].$$
\begin{multline}
\left[
q-1
:
-\frac{p {\left(q - 1\right)}}{\sqrt{-p^{2} + 1}} 
:
\sqrt{\frac{p^{2} {\left(q - 1\right)}^{2}}{p^{2} - 1} - q^{2} + 1}
\,\right]
\\ =
\left[
\sqrt{1-p^2}
:
-p 
:
\sqrt{(1-p^2)\frac{1+q}{1-q} - p^2}
\,\right]
\end{multline}

\end{subsection}

\begin{subsection}{Incidence numbers}
Let $\tilde v\coveredby \tilde w$ be a covering relation as in the previous sections.
Recall the notation $\tilde v(e_j) = \epsilon_j\cdot e_{v(j)}$ which separates
the underlying permutation $v$ and the vector of signs $\epsilon$.
In this section the $\epsilon_j$ will always refer to the signs of a $\tilde v$
(not the $\tilde w$ which is also present).

We have seen that the Richardson variety $\xcell_w^v$ is topologically a circle
that connects $\tilde w$, $\tilde v$ and two ``sibling cells'' $\tilde w'$ and $\tilde v'$.
It follows that there is a path $t\mapsto g_t$ within $\xcell_w^v$ from $g_0=\tilde w$ to 
$g_1=\tilde v$. 
We will compute an explicit choice of such a path and use it
to propagate the orientation of $\bcell_{\tilde w}$ to $\tilde v$.
Comparing the resulting orientation to the one inherited from $\bcell_{\tilde v}$
(together with an inward-pointing tangent vector at $\tilde v$)
allows us to deduce the sign of the incidence number
$\incid{\tilde v}{\tilde w} = \pm1$.

As above, our path will be parametrized using the circle $s^2+c^2=1$, except in the crossing $ee$ case
where we use the $(e,f,g)$-curve from Definition  \ref{def:efgvars}.
In the circle case we 
let $s=\sqrt{1-t^2}$ (assumed to be non-negative) and $c=\theta t$ with an appropriate sign $\theta\in\{\pm1\}$.

In the crossing $ee$-case we let $D=\epsilon_q\gamma$, $E=-\alpha\beta\gamma \epsilon_p$
and define $e=tD\beta$, $f=\pm\sqrt{\frac{1-e^2}{1+e^2}}$, $g=D\sqrt{1-t^2}$.
We choose the sign of $f$ such that $eg+D\alpha\beta f=0$.
Here $\alpha$, $\beta$, $\gamma$ are the parameters of the rise as in Table \ref{tab:covrel}.
\begin{lemma}\label{def:lambda}
Let $\theta = \epsilon_p\epsilon_q$ for an $fe$-rise, $\theta = -\epsilon_p\epsilon_q$ for a 
non-crossing $ee$-rise and $\theta=1$ otherwise.
Define $g_t = \lambda_t \tilde w\lambda_t^{-1}$ with $\lambda_t$ as in the following table.
Then $g_t$ is a path in $\xcell_w^v$ from $\tilde w$ to $\tilde v$.
\begin{center}
\begin{tabular}{|b{3.4cm}|b{3.5cm}|b{3.4cm}|}
\hline
$f\!f$-rise
&
$fe$-rise 
&
$ef$-rise
\\
\begin{flushright}$\begin{pmatrix*}[r]1 & -\alpha c\\ & s\end{pmatrix*}$\end{flushright}
&
\begin{flushright}$\begin{pmatrix*}[r]1 & -c\beta & c^2\alpha\beta\\&s&-sc\\&&s
\end{pmatrix*}$\end{flushright}
&
\begin{flushright}$\begin{pmatrix*}[r]1 & c\alpha\beta & \\ & 1 & c\beta \\ &&s\end{pmatrix*}$\end{flushright}
\\
\hline
$ee$ non-crossing
&
$ee$ crossing
&
$ed$-rise
\\
\begin{flushright}$\begin{pmatrix*}[r]1&-c\alpha\\&s\\&&1&-c\beta\\&&&s\end{pmatrix*}$\end{flushright}
&
\begin{flushright}$\begin{pmatrix*}[r]1&Ee&\frac{Ef}{e^2-1}\\&1&\frac{-ef}{e^2-1}&De\\&&1&Df\\&&&Dg\end{pmatrix*}$\end{flushright}
&
\begin{flushright}$\begin{pmatrix*}[r]1&&&-\alpha c\\&1&-c\beta\\&&s\\&&&s\end{pmatrix*}$\end{flushright}
\\
\hline
\end{tabular}
\end{center}
\end{lemma}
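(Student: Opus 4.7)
The proof will proceed by a direct case-by-case verification for each of the six rise types listed in the table. The general strategy in each case follows the same three-step pattern.

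First, one observes that $\lambda_t$ is upper-triangular with diagonal strictly positive on $[0,1)$, degenerating only at the endpoint $t=1$ where $s=0$ (or $g=0$ in the crossing case). Hence $g_t=\lambda_t\tilde w\lambda_t^{-1}$ is a well-defined continuous curve of involutions in $B\tilde wB$ on $[0,1)$; the involution property $g_t^2=1$ is automatic from $\tilde w^2=1$. The expression for $g_t$ is a rational function of $c,s$ (respectively $e,f,g$) whose denominator cancels with the numerator, so $g_t$ extends continuously to $t=1$.

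Second, I verify that $g_t\in\xcell_w^v$ by expanding $\lambda_t\tilde w\lambda_t^{-1}$ entrywise and matching the resulting submatrix on the indices $\{i,j,v(i),v(j)\}$ with the parameterization of $\xcell_w^v$ given in Figure~\ref{tab:klmatrix}. In the five non-crossing cases this amounts to identifying the circle parameters $(c,s)$ from $\lambda_t$ with those of $\xcell_w^v$, up to signs depending on $\alpha$, $\beta$, $\epsilon_p$, $\epsilon_q$. Since $\xcell_w^v$ as listed in Figure~\ref{tab:klmatrix} is by construction contained in $\Grassm\cap B\os vB$, matching against it simultaneously confirms both the orthogonality of $g_t$ and its membership in the opposite Bruhat cell of $\tilde v$ --- avoiding a separate check of either property. (Alternatively, orthogonality can be seen directly from the fact that $\lambda_t\os\lambda_t$ commutes with $\tilde w$.)

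Third, I check the endpoints. At $t=0$ we have $s=1$, $c=0$ (or $e=0$, $g=D$, $f=\pm 1$ in the crossing case), and $\lambda_0$ is either the identity or commutes with $\tilde w$, which gives $g_0=\tilde w$. At $t=1$ the evaluation of the continuous extension $g_t$ reduces to a purely diagonal matrix on the relevant indices; the explicit choice of $\theta$ --- namely $\epsilon_p\epsilon_q$ for an $fe$-rise, $-\epsilon_p\epsilon_q$ for a non-crossing $ee$-rise, and $1$ otherwise --- is calibrated precisely so that this limit equals $\tilde v$ rather than its sibling $\tilde v'$. This is where the sign conventions of Table~\ref{tab:covrel} come in: the relations $\epsilon_i\epsilon_j=\pm 1$ imposed by $\tilde w^2=1$ and the definition of $\theta$ combine to produce the correct diagonal signs at $t=1$.

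The main obstacle is the crossing $ee$ case. Here $\xcell_w^v$ is parameterized by a triple $(e,f,g)\in S^2$ subject to the two algebraic constraints from Definition~\ref{def:efgvars}, and the $4\times 4$ matrix $\lambda_t$ has entries involving $Ee$, $Ef/(e^2-1)$, $De$, $Df$, $Dg$. The verification that $\lambda_t\tilde w\lambda_t^{-1}$ matches the $4\times 4$ block in the last row of Figure~\ref{tab:klmatrix} requires careful sign-chasing with the auxiliary signs $D=\epsilon_q\gamma$ and $E=-\alpha\beta\gamma\epsilon_p$. In particular, the condition $eg+D\alpha\beta f=0$ that pins down the sign of $f$ is exactly the relation $f=\epsilon\delta eg$ required for the target matrix to lie on the Richardson circle; once this is established, the remaining entries of $g_t$ are determined by analytic continuation and agree with the prescribed form. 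In all cases the calculations are routine but tedious and are best carried out in Sage.
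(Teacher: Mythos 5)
Your proposal is correct and takes the same route the paper intends: the paper explicitly omits this proof as ``a straightforward computation,'' and your three-step verification (continuity of $g_t=\lambda_t\tilde w\lambda_t^{-1}$ through the degenerate endpoint $s=0$ via cancellation, membership in $\xcell_w^v$ by matching against the parametrizations of Figure \ref{tab:klmatrix}, and endpoint evaluation showing that the sign $\theta$ selects $\tilde v$ rather than the sibling $\tilde v'$) is exactly that computation spelled out, including the correct identification of the crossing $ee$ case and the condition $eg+D\alpha\beta f=0$ as the delicate point. One small imprecision: the limit at $t=1$ is the signed permutation matrix $\tilde v$, which is diagonal on the model indices only for the $f\!f$-rise; in the other cases it retains off-diagonal $2\times 2$ blocks, though this does not affect the argument.
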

The proof, of course, is a straightforward computation that will be ommited.

As already mentioned, we can use these paths to propagate the orientation of the cell
from the center $\tilde w$ to a boundary point $\tilde v$.
This calculation can be carried out numerically, and with sufficient precision it
leads to an exact determination of the incidence number. We record the results
of such a computation in the following Lemma.

Recall that for each rise type there is a minimal dimension where it can occur.
We call these the ``model rises'' (or ``model coverings'')
since a pattern embedding reduces a general $\tilde v\coveredby \tilde w$
to one of these cases.

\begin{lemma}\label{comp:incidmodel}
Let $\tilde v\coveredby \tilde w$ be a model rise of type $r$, realized in dimension $d$,
and let $\alpha$, $\beta$, $\gamma$ be the parametrization as in Table \ref{tab:covrel}.
Then the incidence number $\incid{\tilde v}{\tilde w}$ between
$\bcell_{\tilde w}$ and $\bcell_{\tilde v}$ is as follows:
\begin{center}
\begin{tabular}{|l|l|l|}
\hline
$\incid{\tilde v}{\tilde w}$ & rise type & $d$ 
\\
\hline
$-\alpha$ & $f\!f$ & 2
\\$1$& $fe$ & 3
\\$-\alpha$&  $ef$ & 3
\\$\alpha \beta$&  $ee$ non-crossing & 4
\\$1$& $ee$ crossing & 4
\\$-\alpha$& $ed$ & 4
\\
\hline
\end{tabular}
\end{center}
\end{lemma}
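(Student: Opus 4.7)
The plan is to determine each $\incid{\tilde v}{\tilde w}$ by propagating a chosen orientation of $\bcell_{\tilde w}$ along the path $g_t$ of Lemma \ref{def:lambda} from $\tilde w$ to the boundary point $\tilde v$, and then comparing the result with the product orientation provided near $\tilde v$ by the local diffeomorphism of Theorem \ref{thm:kosdecomp}.

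First I would fix lexicographic orientations on $\acell_{\tilde w}$ and $\acell_{\tilde v}$, namely the wedge of $dA_{i,j}$ over the free entries indexed by $\qinversions{w}$ and $\qinversions{v}$ respectively. Under the homeomorphism $A \mapsto \alpha \tilde w \alpha^{-1}$ with $A = \alpha\os\alpha$, these transport to explicit tangent frames $(\partial_1,\ldots,\partial_{d+1})$ at $\tilde w \in \bcell_{\tilde w}$ and $(\partial'_1,\ldots,\partial'_d)$ at $\tilde v \in \bcell_{\tilde v}$, each frame vector being of the form $[\eta,g] = \eta g - g\eta$ for an upper-triangular matrix $\eta$ supported on the appropriate inversion set.

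Next I would propagate the frame at $\tilde w$ along the path $g_t = \lambda_t \tilde w \lambda_t^{-1}$ by conjugation, producing a continuous tangent frame of $\bcell_{\tilde w}$ whose limit $(f_1,\ldots,f_{d+1})$ at $t=1$ lies in $T_{\tilde v}\overline{\bcell_{\tilde w}}$. Theorem \ref{thm:kosdecomp} identifies $T_{\tilde v}\overline{\bcell_{\tilde w}}$ locally with $T_{\tilde v}\bcell_{\tilde v} \oplus T_{\tilde v}\xcell_w^v$, and the inward tangent vector to $\xcell_w^v$ at $\tilde v$ is $n := -\dot g_t|_{t=1}$, computable directly from the matrices $\lambda_t$ recorded in Lemma \ref{def:lambda}. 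The incidence number is then the sign $\epsilon \in \{\pm 1\}$ satisfying
\[
f_1 \wedge \cdots \wedge f_{d+1} \;=\; \epsilon \cdot n \wedge \partial'_1 \wedge \cdots \wedge \partial'_d,
\]
which is extracted from a single determinant of size at most $(d+1)\times(d+1) \le 5\times 5$.

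The main obstacle is sign bookkeeping. Each model rise lives in dimension $d \le 4$ and involves only a handful of nonzero matrix entries, so the determinants themselves are elementary to write down. The delicate part is to coordinate (i) the ordered basis conventions for $\acell_{\tilde v}$ and $\acell_{\tilde w}$, (ii) the sign $\theta$ chosen inside $\lambda_t$ in Lemma \ref{def:lambda}, and (iii) the overall convention relating the $\acell$-orientation to the cellular orientation of $\bcell$. Because the rise parameters $\alpha$, $\beta$, $\gamma$ enter only through the coefficients of $\eta$, $\lambda_t$ and $n$, the dependence of $\epsilon$ on those parameters must be monomial, and one expects the table in the statement to drop out once the six determinants are evaluated---either by hand for the $f\!f$, $fe$, $ef$ cases or by a short symbolic computation, confirmed numerically as the author indicates, for the three four-dimensional cases.
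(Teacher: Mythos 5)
Your proposal is essentially the paper's own argument: the paper likewise propagates the orientation of $\bcell_{\tilde w}$ along the explicit path $g_t=\lambda_t\tilde w\lambda_t^{-1}$ of Lemma \ref{def:lambda}, compares at $\tilde v$ with the orientation of $\bcell_{\tilde v}$ together with an inward-pointing tangent vector, and delegates the resulting low-dimensional determinant evaluations to a machine computation. The only caveat worth noting is that $\lambda_t$ degenerates at $t=1$, so the limiting frame must be extracted with some care (e.g.\ numerically near the endpoint, as the paper does), but this does not change the method.
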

\begin{proof}
Machine verified.
\end{proof}

We next show how to reduce the computation of a general $\incid{\tilde v}{\tilde w}$ to
one of these model-computations.

Let $i,j$ be as in Table \ref{tab:covrel}
and define $D=\{i,j,v(i),v(j)\}$, $D^c = \setof{k}{1\le k\le n,\, k\not\in D}$.
Writing 
$$\tilde x = \tilde v\vert_D,\quad \tilde u = \tilde w\vert_D,\quad \tilde r = \tilde v\vert_{D^c} = \tilde w\vert_{D^c}$$
gives decompositions 
\begin{equation}\label{comp:redmod}
\tilde v = \tilde r \oplus \tilde x,\quad \tilde w = \tilde r \oplus \tilde u
\end{equation}
where $\tilde x\coveredby\tilde u$ is one of the model rises and $\tilde r$ is common to both $\tilde v$ and $\tilde w$.

Recall (see the discussion following Definition \ref{acelldef})
that the tangent space $T_{\tilde w}\bcell_{\tilde w}$ 
has a basis given by the set 
$$\qinversions{\tilde w} = \setof{(i,j)}{i<j,\,w(i)>w(j)} / (i,j)\simeq (w(j),w(i)).$$
It follows that we can write
\begin{align}
\label{eq:tangdecomp}
T_{\tilde w}\bcell_{\tilde w}
&=
\underbrace{\,\RR\, \qinversions{\tilde w\vert_D}\,}_{=: W_{DD}}
\, \oplus \,
\underbrace{\,\RR\, \qinversions{\tilde w\vert_{D^c}}\,}_{=: W_{D^cD^c}}
\, \oplus \,
\underbrace{\,\RR \left( \inversions{\tilde w} \cap \left(D\times D^c\right)\right)\,}_{=: W_{DD^c}}.
\end{align}
This decomposition is realized by a shuffle permutation of $\qinversions{\tilde w}$ that we denote $\sigma_{\tilde w}$.

There is a corresponding decomposition of $T_{\tilde v}\bcell_{\tilde v}$ and
we can compare these pieces and their contribution to $\incid{\tilde v}{\tilde w}$ one-by-one:
\begin{center}
\begin{tabular}{cl}
$W_{DD}$ vs.{} $V_{DD}$ & isomorphic to the model rise, contributes $\incid{\tilde x}{\tilde u}$. \\
$W_{D^cD^c}$ vs.{} $V_{D^cD^c}$ & these are identical, no contribution. \\
$W_{DD^c}$ vs. $V_{DD^c}$ & related by an isomorphism that is induced by \\ &a path from $\tilde w$ to $\tilde v$,
thus contributes an extra \\& sign $\xi(\tilde v,\tilde w)$.
\end{tabular}
\end{center}
Together these observations imply the
\begin{lemma}\label{comp:incid}
One has $\incid{\tilde v}{\tilde w} = \incid{\tilde x}{\tilde u} \cdot \sign\sigma_{\tilde w} 
\cdot \sign \sigma_{\tilde v} \cdot \xi(\tilde v,\tilde w)$.
\end{lemma}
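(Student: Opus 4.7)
The plan is to carry out, step by step, the orientation propagation that defines the incidence number and to observe that it splits diagonally with respect to the decomposition \eqref{eq:tangdecomp}. First, fix the orientation of $T_{\tilde w}\bcell_{\tilde w}$ by listing the basis indexed by $\qinversions{\tilde w}$ in some canonical order (say lexicographic), and likewise for $\tilde v$. By definition, $\sigma_{\tilde w}$ is the permutation that carries this canonical ordering to the ordering that groups the basis according to the three-block decomposition \eqref{eq:tangdecomp}, and similarly for $\sigma_{\tilde v}$. The factors $\sign \sigma_{\tilde w}$ and $\sign \sigma_{\tilde v}$ then arise purely from the cost of passing between the canonical and block-respecting orderings, and are independent of any geometry.

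Working in the block-respecting basis, I propagate the orientation of $\bcell_{\tilde w}$ to $\tilde v$ along the path $g_t=\lambda_t \tilde w \lambda_t^{-1}$ furnished by Lemma \ref{def:lambda}. Inspection of that table shows that every $\lambda_t$ has block form $(\lambda_t)_{DD}\oplus \id_{D^c}$, so that $\Ad(\lambda_t)$ respects the splitting \eqref{eq:tangdecomp}. On the $D^cD^c$-block the action is literally the identity, and because $\tilde w$ and $\tilde v$ restrict to the same $\tilde r$ on $D^c$ the canonical identification $W_{D^cD^c}=V_{D^cD^c}$ is orientation-preserving, contributing $+1$. On the $DD$-block the entire setup reduces to the model covering $\tilde x \coveredby \tilde u$ on $\RR^D$; together with the inward-pointing tangent vector, which also lies in the $DD$-block since it is tangent to $g_t\in\xcell_w^v\subset\RR^D$, this piece contributes $\incid{\tilde x}{\tilde u}$ by Lemma \ref{comp:incidmodel}. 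Finally, on the mixed $DD^c$-block the conjugation $\Ad(\lambda_1):W_{DD^c}\to V_{DD^c}$ is a linear isomorphism whose sign, on the canonically ordered bases, is precisely $\xi(\tilde v,\tilde w)$ by definition.

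Multiplying these three local signs together with the two shuffle signs yields the claimed formula. The main point to verify carefully is the block-diagonality: one must check that an infinitesimal tangent vector of the form $E_{i,j}-\epsilon E_{w(j),w(i)}$ with $i\in D$, $j\in D^c$ is carried by $\Ad(\lambda_t)$ into a linear combination of such mixed vectors, with no spillover into $W_{DD}$ or $W_{D^cD^c}$, and symmetrically for vectors supported purely in $D^c$. This follows from the block form $(\lambda_t)_{DD}\oplus \id_{D^c}$, but the bookkeeping — tracking both the rise index and its $\tilde v$-orbit under $(i,j)\mapsto(w(j),w(i))$ across the path — is where the computational substance of the argument sits; the sign $\xi(\tilde v,\tilde w)$ is precisely what remains after this bookkeeping is completed.
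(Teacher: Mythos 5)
Your proposal follows the same route as the paper: decompose the tangent spaces via \eqref{eq:tangdecomp}, observe that the path from Lemma \ref{def:lambda} respects this splitting, and multiply the block-wise contributions ($\incid{\tilde x}{\tilde u}$ from $W_{DD}$, $+1$ from $W_{D^cD^c}$, $\xi(\tilde v,\tilde w)$ from $W_{DD^c}$) together with the two shuffle signs. This matches the paper's argument, including its treatment of $\xi(\tilde v,\tilde w)$ as the sign defined by the mixed-block comparison and made explicit only afterwards.
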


We close this section with a more explicit description of the sign $\xi(\tilde v,\tilde w)$.

Assume a path $g=g(t)$ in $\bcell_{\tilde w}$ between $\tilde v$ and $\tilde w$, with $\dot g\not=0$ everywhere.
we can think of $T_g\bcell_{\tilde w}$ as a set of
symmetric, $g$-invariant matrices $C$; indeed,
using the isomorphism
$$\bcell_{\tilde w} = \{\alpha g\alpha^{-1}\} 
\,\leftrightarrow\, 
\left(\bigslant{\setof{A=\alpha\os\alpha}{A = A\os = A^g > 0}}{C(g)\cap B}\right)$$
we have
$$T_g \bcell_{\tilde w} = \bigslant{\setof{C}{C = C\os = C^g}}{\left(C\simeq C+\eta+\eta\os, \eta\in C(g)\cap B\right)}.$$
There is then a map $\phi_g:T_g\bcell_{\tilde w} \rightarrow T_{\tilde v}\bcell_{\tilde v} \oplus \RR$
given by 
\begin{equation}
C \mapsto \left(\tilde vgC + Cg\tilde v,\scprod{C}{\dot g}\right)
\end{equation}
We believe that this map is an isomorphism as long as the rotation $\tilde vg$ does not
map any non-trivial $x$ to a perpendicular vector $\tilde vg(x)$.
Indeed, under this condition the map $C\mapsto \tilde vg C + C\tilde vg$ is invertible. 
This precludes the interesting case $g=\tilde w$, though, so it does not lead to a direct
computation of $\incid{\tilde v}{\tilde w}$.
To compute $\xi(\tilde v,\tilde w)$, however, 
we only need the restriction $\phi_g^{DD^c}:W_{DD^c}\rightarrow V_{DD^c}$,
and this turns out to be well-behaved.

We now assume $g(t) = \lambda \tilde w\lambda^{-1}$ where $\lambda$
is the map from Lemma \ref{def:lambda} (with an appropriate parametrisation $\lambda = \lambda(t)$). 
This choice
guarantees that $\phi_g$ respects
the decomposition \eqref{eq:tangdecomp}.
\begin{lemma}
On $W_{DD^c}$ the map $\phi^{DD^c}_{\tilde w}$ is given by $C\mapsto \tilde\sigma C + C\tilde\sigma^{-1}$
where $\tilde\sigma = \tilde v\tilde w$ is the signed covering operation.
This map is an isomorphism and induces the extra sign $\xi(\tilde v,\tilde w)$.
\end{lemma}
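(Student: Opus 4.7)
The plan is to compute $\phi_{\tilde w}$ explicitly using the block structure of $\tilde v$ and $\tilde w$, and then verify invertibility by a short case analysis driven by Table \ref{tab:covrel}. The first step is a direct substitution: since both $\tilde v$ and $\tilde w$ are involutions, $\tilde v\tilde w=\tilde\sigma$ and $\tilde w\tilde v=\tilde\sigma^{-1}$, so at $g=\tilde w$ the first coordinate of $\phi_g(C)$ reads $\tilde\sigma C + C\tilde\sigma^{-1}$. The $\RR$-coordinate $\scprod{C}{\dot g(0)}$ vanishes on $W_{DD^c}$, since by Lemma \ref{def:lambda} the curve $\lambda_t$ is supported on $D$, so $\dot g(0)=\dot\lambda(0)\tilde w-\tilde w\dot\lambda(0)$ is again $D$-supported and therefore Frobenius-orthogonal to any $C$ supported on $D\times D^c$.

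Next I would exploit the decomposition \eqref{comp:redmod}: $\tilde v=\tilde r\oplus\tilde x$ and $\tilde w=\tilde r\oplus\tilde u$ give $\tilde\sigma=\id_{D^c}\oplus\tilde\sigma_D$ with $\tilde\sigma_D:=\tilde x\tilde u$. Represent $C\in W_{DD^c}$ by its $D\times D^c$ block $M$ (the transposed block is then determined by symmetry); the $\tilde w$-invariance of $C$ reads $\tilde uM=M\tilde r$, while $V_{DD^c}$ is cut out by $\tilde xM'=M'\tilde r$. Because $\tilde\sigma^{-1}$ acts trivially on $\RR^{D^c}$, the two blocks extract as $(\tilde\sigma C)|_{D\times D^c}=\tilde\sigma_D M$ and $(C\tilde\sigma^{-1})|_{D\times D^c}=M$, so $\phi^{DD^c}_{\tilde w}$ is simply left multiplication by $\tilde\sigma_D+I$. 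A short check using $\tilde x^2=I$ and $\tilde uM=M\tilde r$ gives $\tilde x(\tilde\sigma_D+I)M=(\tilde u+\tilde x)M=(\tilde\sigma_D+I)M\tilde r$, confirming that the image lies in $V_{DD^c}$.

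The main (and essentially only) obstacle is to see that $\tilde\sigma_D+I$ is invertible, i.e.\ that $-1$ is not an eigenvalue of $\tilde\sigma_D$. This reduces to a routine inspection of the six rise types in Table \ref{tab:covrel}: for an $f\!f$-rise the block is a $90^\circ$ rotation with eigenvalues $\pm i$; for $fe$ and $ef$ the underlying permutation is a $3$-cycle whose sign entries multiply to $+1$, so $\tilde\sigma_D$ has order $3$ with primitive cube-root eigenvalues; for non-crossing $ee$ and for $ed$ the block consists of two $2$-cycles each with sign product $-1$, hence order $4$ with eigenvalues $\pm i$; and for crossing $ee$ one gets a $4$-cycle with sign product $-1$, of order $8$ with primitive eighth-root eigenvalues. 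In none of these cases does $-1$ appear, so $\tilde\sigma_D+I$ is invertible on $\RR^D$, and its restriction to the symmetry-constrained subspace $W_{DD^c}$ inherits this invertibility. Finally, $\xi(\tilde v,\tilde w)$ is by definition the sign of the determinant of the isomorphism $W_{DD^c}\to V_{DD^c}$ induced by the path, so identifying this isomorphism with $\phi^{DD^c}_{\tilde w}$ yields the last claim at once.
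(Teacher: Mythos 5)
Your proposal is correct and its overall architecture matches the paper's proof: compute $\phi$ at $g=\tilde w$, observe that it respects the $D$/$D^c$ splitting, and reduce invertibility to a finite check on the model block $D$. The difference is in how the final step is organized. The paper works entrywise in the inversion basis $C^w_{i,j}=S_{i,j}+\delta_i\delta_jS_{w(i),w(j)}$, derives $\phi(C^w_{i,j})=\delta_j\epsilon_iC^v_{v(i),w(j)}+\delta_i\delta_jC^v_{w(i),w(j)}$, and reduces invertibility to the map $e_i\mapsto\epsilon_ie_{v(i)}+\delta_ie_{w(i)}$ on $\RR^d$, which is then ``checked by hand''. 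You package the same computation as left multiplication by $\tilde\sigma_D+I$ on the $D\times D^c$ block; since $\tilde x+\tilde u=\tilde x(\tilde\sigma_D+I)$, your criterion and the paper's are literally equivalent, but your uniform spectral argument -- the sign patterns of Table \ref{tab:covrel} force each cycle of $\tilde\sigma_D$ to have sign product $+1$ on a $3$-cycle or $-1$ on a $2$- or $4$-cycle, so $-1$ is never an eigenvalue -- replaces the paper's case-by-case verification and explains \emph{why} it works. Two further points in your favor: the intertwining relation $\tilde x(\tilde\sigma_D+I)=(\tilde\sigma_D+I)\tilde u$ that you establish also shows the inverse maps $V_{DD^c}$ back into $W_{DD^c}$, i.e.\ it gives surjectivity and not just injectivity of the restriction (worth one explicit sentence); and your observation that the $\RR$-coordinate $\scprod{C}{\dot g(0)}$ vanishes on $W_{DD^c}$ because $\dot g(0)$ is $D$-supported makes precise the paper's unproved assertion that the chosen path makes $\phi_g$ respect the decomposition \eqref{eq:tangdecomp}.
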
 
\begin{proof}
One easily checks that $\phi^{D^cD^c}_g$ is the identity. 
On $W_{DD}$ the map $\phi$ might develop singularities near $\tilde w$, but we
can replace it by any convenient continuos identification 
$T_g\bcell_{\tilde u} \cong T_v\bcell_{\tilde x}\oplus\RR$ without changing 
$\phi^{DD^c}_g$. So if we can show that the latter is an isomorphism
it will automatically qualify for the computation of $\incid{\tilde v}{\tilde w}$.

It remains to investigate the behaviour on $W_{DD^c}$. 
Let $S_{i,j}$ denote the symmetric matrix $X$ with $X_{i,j} = X_{j,i} = 1$ and $X_{p,q}=0$ for other $(p,q)$.
Write
$$\tilde w(e_i) = \delta_i e_{w(i)},\quad
  \tilde v(e_i) = \epsilon_i e_{v(i)},\quad
  \tilde \sigma(e_i) = s_i e_{\sigma(i)}.$$
One has $s_i = \epsilon_{w(i)}\cdot\delta_i$.
With 
$C^v_{i,j} = S_{i,j} + \epsilon_i\epsilon_j C_{v(i),v(j)}$,
$C^w_{i,j} = S_{i,j} + \delta_i\delta_j C_{w(i),w(j)}$
we find that $V_{DD^c}$ and $W_{DD^c}$ have the bases
\begin{align}
\setof{C^v_{i,j}}{i\in D,\, j\in D^c,\, (i<j \land v(i)>v(j)) \lor (i>j \land v(i)<v(j))} &\subset V^{DD^c}, \\
\setof{C^w_{i,j}}{i\in D,\, j\in D^c,\, (i<j \land w(i)>w(j)) \lor (i>j \land w(i)<w(j))} & \subset W^{DD^c}.
\end{align}
A straightforward computation (using $v(j)=w(j)$ since $j\in D^c$) 
shows that $\phi$ maps
$$C^w_{i,j} \mapsto 
S_{i,j} + \epsilon_{w(i)}\delta_i S_{\sigma(i),j}
+ \delta_i\delta_j S_{w(i),w(j)} + \delta_j\epsilon_i S_{v(i),v(j)}.$$
By assumption, $\tilde\sigma$ matches one of the patterns of Table \ref{tab:covrel};
this is seen to imply $\delta_j = \epsilon_{w(j)}$. We thus find
\begin{equation}
\phi: C^w_{i,j} \mapsto \delta_j\epsilon_i C^v_{v(i),w(j)} + \delta_i\delta_j C^v_{w(i),w(j)}
\end{equation}
To show that $\phi$ is invertible we can thus take $j$ and $w(j)$ to be fixed.
We are left with the map $e_i\mapsto \epsilon_i e_{v(i)} + \delta_i e_w(i)$
from the model space $\RR^d$ to itself. Its invertibility can then be checked by hand.
\end{proof}

Note that an enumeration of the bases of $T_{\tilde v}\bcell_{\tilde v}$
and $T_{\tilde w}\bcell_{\tilde w}$ provides us with a second identification $W_{DD^c}\cong V_{DD^c}$.
It thus actually makes sense to speak of the determinant $\det \phi^{DD^c}_{\tilde w} = \xi(\tilde v,\tilde w)$.

\end{subsection}

%%%%%%%%%%%%%%%%%%%%%%%%%%%%%%%%%%%%%%%%%%%%%%%%%%%%%
%%%%%%%%%%%%%%%%%%%%%%%%%%%%%%%%%%%%%%%%%%%%%%%%%%%%%
%%%%%%%%%%%%%%%%%%%%%%%%%%%%%%%%%%%%%%%%%%%%%%%%%%%%%
%%%%%%%%%%%%%%%%%%%%%%%%%%%%%%%%%%%%%%%%%%%%%%%%%%%%%

\begin{subsection}{Oriented Grassmannians}

For a signed involution $\tilde v$ the corresponding eigenspace $E_{\tilde v}^-$ has the basis
$\setof{e_i-\tilde v(e_i)}{1\le i\le v(i)}$. 
Putting this in ascending order with respect to $i$ gives us a preferred orientation,
which we denote $\tilde v^+$. The opposite orientation is then denoted $\tilde v^-$. 
The $\tilde v^{\pm}$ index the cells of a CW-decomposition of the oriented Grassmannian $G_k\oriented(n)$.

For a rise $\tilde v\coveredby \tilde w$ one can ask whether the orientation of
$E_{\tilde v}^-$ induced from $\tilde w^+$ agrees with $\tilde v^+$ or $\tilde v^-$.
We let $\orid{\tilde v}{\tilde w} = +1$ in the first case, $-1$ in the second.
Knowledge of $\orid{\tilde v}{\tilde w}$ allows to deduce the incidence numbers between the cells
of the oriented Grassmannian $G_k\oriented(\RR^n)$.

We first compute the $\orid{\tilde v}{\tilde w}$ for the model rises.
\begin{lemma}\label{comp:ormodel}
Let $\tilde v\coveredby \tilde w$ be a model rise of type $r$ and dimension $d$.
Let $i$, $j$, $\alpha$, $\beta$, $\gamma$ be as in Table \ref{tab:covrel}
and write $\tilde v(e_k) = \epsilon_k\cdot e_{v(k)}$.
Then $\orid{\tilde v}{\tilde w}=+1$ except for the following cases:
\begin{center}
\begin{tabular}{|l|l|l|}
\hline
rise type & condition & $d$ 
\\
\hline
$f\!f$ & $(\epsilon_i,\alpha) = (+1,-1)$ & 2
\\$fe$ & $(\epsilon_i,\beta) = (+1,-1)$ & 3
\\$ef$ & $(\epsilon_j,\beta) = (-1,+1)$ & 3
%\\$ee$ non-crossing & $\orid{\tilde v}{\tilde w}$ is never negative & 4
\\$ee$ crossing & $\alpha\beta=-1,\,\gamma\epsilon_i=-1$ & 4
\\$ed$ & $\beta=-1$ & 4
\\
\hline
\end{tabular}
\end{center}
In particular, a non-crossing model $ee$-rise always has $\orid{\tilde v}{\tilde w}=+1$.
\end{lemma}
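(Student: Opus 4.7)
My plan is to propagate the preferred orientation of $E_{\tilde w}^-$ along the explicit Richardson path $g_t = \lambda_t \tilde w \lambda_t^{-1}$ from Lemma \ref{def:lambda} and compare it with the preferred orientation of $E_{\tilde v}^-$ at $t = 1$. This parallels the strategy used in the proof of Lemma \ref{comp:incidmodel} for the incidence numbers.

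Concretely, the positively-oriented basis of $E_{\tilde w}^-$ is given by the vectors $f_k = e_k - \tilde w(e_k)$ for $k \le w(k)$, listed in increasing $k$. First I would apply $\lambda_t$ to each $f_k$, obtaining a continuous family $\{\lambda_t(f_k)\}$ of bases of $E_{g_t}^- = \lambda_t(E_{\tilde w}^-)$ that realizes $\tilde w^+$ at $t = 0$ by construction. As $t \to 1$ the matrix $\lambda_t$ degenerates (its diagonal $s$-factors vanish), so I would rescale each $\lambda_t(f_k)$ by the smallest power $s^{-m_k}$ making $\lim_{t \to 1} s^{-m_k}\lambda_t(f_k) =: v_k$ a nonzero vector in $E_{\tilde v}^-$. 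The family $\{v_k\}$ is then a basis of $E_{\tilde v}^-$ carrying the transported orientation, and $\orid{\tilde v}{\tilde w}$ is the sign of the determinant of $\{v_k\}$ expressed in the preferred basis $\{e_l - \tilde v(e_l)\}_{l \le v(l)}$ of $E_{\tilde v}^-$.

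This reduces the lemma to six explicit finite-dimensional matrix computations, one per model rise in Table \ref{tab:covrel}. In each case the resulting sign depends on the covering parameters $\alpha, \beta, \gamma$ and on the signs $\epsilon_k$ of $\tilde v$, producing precisely the conditions listed in the statement (with ``$\orid{\tilde v}{\tilde w}=+1$ always'' for the $ee$ non-crossing case reflecting the block-diagonal structure of $\lambda_t$ in that rise type). The hard part will be accurately bookkeeping the rescalings, since different $f_k$ may require different exponents $m_k$ and since the preferred basis of $E_{\tilde v}^-$ is indexed by a different subset of rows than that of $E_{\tilde w}^-$; the $ee$-crossing case is the most intricate because the path uses the three-variable $(e,f,g)$ parametrization of Definition \ref{def:efgvars} rather than a single circle coordinate. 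As with Lemma \ref{comp:incidmodel}, the remaining calculations are routine and can be cross-checked numerically.
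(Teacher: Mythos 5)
Your proposal follows the paper's own argument: the paper likewise transports the preferred orientation of $E_{\tilde w}^-$ along the explicit path $g_t=\lambda_t\tilde w\lambda_t^{-1}$ of Lemma \ref{def:lambda} and compares the result at $t=1$ with $\tilde v^{\pm}$, delegating the six case-by-case computations to the machine. Your rescaling by positive powers of $s$ to extract a nondegenerate limiting frame is a correct and orientation-preserving way to carry out exactly that computation, so the approach is essentially identical.
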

\begin{proof}
The $\lambda$ from Lemma \ref{def:lambda} provides us with an explicit path from $\tilde w$ to $\tilde v$
and a straightforward computation (easily implemented in Sage, for example) 
allows us to compare the resulting orientations at $\tilde v$.  
The details are left to the machine.
\end{proof}

For a general covering relation $\tilde v\coveredby \tilde w$
the computation of $\orid{\tilde v}{\tilde w}$ can be reduced to the model case
as follows: 
let $\tilde v = \tilde r \oplus \tilde x$, $\tilde w = \tilde r \oplus \tilde u$
be the decomposition as in \eqref{comp:redmod}.
Let 
\begin{align}
I_{\tilde v} &= \setof{i}{1\le i\le n,\, 
(i<v(i)) \lor \left( \left(i=v\left(i\right)\right) \land \left(\tilde v\left(e_i\right)=-e_i\right)\right)} 
\end{align}
be the index set for $E_{\tilde v}^- = \RR\setof{e_i-\tilde v(e_i)}{i\in I_{\tilde v}}$.
The decomposition
$$I_{\tilde v} = \left( I_{\tilde v} \cap D \right) \amalg  \left( I_{\tilde v} \cap D^c \right)$$
is realized by a shuffle permutation $\rho_{D,\tilde v}$.
\begin{lemma}\label{comp:orid}
One has $\orid{\tilde v}{\tilde w} = \orid{\tilde x}{\tilde u} \cdot \sign(\rho_{D,\tilde v}) \cdot \sign(\rho_{D,\tilde w})$.
\end{lemma}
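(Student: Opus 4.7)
The plan is to propagate orientations along the explicit path $g_t = \lambda_t \tilde w \lambda_t^{-1}$ from Lemma \ref{def:lambda}. Inspection of the six matrices there shows that every nonzero off-diagonal entry of $\lambda_t$ lies in the $D\times D$ block, while the rows and columns indexed by $D^c$ carry only the diagonal~$1$. Consequently $g_t$ splits as $\tilde r \oplus \tilde u_t$ with $\tilde u_0 = \tilde u$ and $\tilde u_1 = \tilde x$, and the $(-1)$-eigenspace decomposes orthogonally as $E^-_{g_t} = E^-_{\tilde r} \oplus E^-_{\tilde u_t}$ throughout the path; moreover the $E^-_{\tilde r}$ summand, with its basis $\setof{e_k - \tilde r(e_k)}{k \in I_{\tilde r}}$, is literally constant in~$t$.

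Next I would introduce the ``concatenated'' orientations $[\tilde v]_D$ and $[\tilde w]_D$ on $E^-_{\tilde v}$ and $E^-_{\tilde w}$, defined by first listing the basis vectors $e_k - \tilde v(e_k)$ (resp.\ $e_k - \tilde w(e_k)$) for $k$ in $I_{\tilde v}\cap D$ in ascending order, then those for $k \in I_{\tilde v}\cap D^c$ in ascending order. By the very definition of the shuffle permutations,
\[
\tilde v^+ = \sign(\rho_{D,\tilde v})\cdot [\tilde v]_D,\qquad \tilde w^+ = \sign(\rho_{D,\tilde w})\cdot [\tilde w]_D.
\]
In the concatenated basis the transport of orientations along $g_t$ is block-diagonal: on $E^-_{\tilde r}$ it is the identity (since the basis is constant in $t$), while on the $D$-block it is by definition the model transport that carries $\tilde u^+$ to $\orid{\tilde x}{\tilde u}\cdot \tilde x^+$ from Lemma \ref{comp:ormodel}. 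Hence transport carries $[\tilde w]_D$ to $\orid{\tilde x}{\tilde u}\cdot [\tilde v]_D$.

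Combining these statements yields
\[
\orid{\tilde v}{\tilde w}\cdot \tilde v^+
= \sign(\rho_{D,\tilde w})\cdot \orid{\tilde x}{\tilde u}\cdot [\tilde v]_D
= \sign(\rho_{D,\tilde w})\cdot \orid{\tilde x}{\tilde u}\cdot \sign(\rho_{D,\tilde v})\cdot \tilde v^+,
\]
which is the claimed identity. The only non-bookkeeping ingredient is the block structure of $\lambda_t$, which is the main obstacle to making this rigorous; fortunately it can be read off by direct inspection of the six explicit matrices tabulated in Lemma \ref{def:lambda}, so the whole argument reduces to the standard shuffle-sign identity once the block-diagonality is observed.
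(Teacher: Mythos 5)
Your argument is correct and is exactly the intended one: the paper leaves this proof to the reader, but the surrounding text (the path $\lambda_t$ of Lemma \ref{def:lambda} supported on the $D\times D$ block, and the shuffle permutations $\rho_{D,\tilde v}$, $\rho_{D,\tilde w}$ introduced just before the statement) makes clear that the proof is meant to be precisely this block-diagonal transport combined with the shuffle-sign bookkeeping. The one point you flag as needing care --- that $\lambda_t$ acts as the identity on $\RR^{D^c}$, so that $g_t=\tilde r\oplus\tilde u_t$ and $E^-_{g_t}$ splits with constant $\tilde r$-summand --- holds by construction of the pattern embedding, so there is no gap.
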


The proof is left to the reader.

\end{subsection}

\end{section}

%%%%%%%%%%%%%%%%%%%%%%%%%%%%%%%%%%%%%%%%%%%%%%%%%%%%%%%%%%%%%%%%%%%%%%%%%%%%%%%%%%%%%%%%%%%%%%%%%
%%%%%%%%%%%%%%%%%%%%%%%%%%%%%%%%%%%%%%%%%%%%%%%%%%%%%%%%%%%%%%%%%%%%%%%%%%%%%%%%%%%%%%%%%%%%%%%%%
%%%%%%%%%%%%%%%%%%%%%%%%%%%%%%%%%%%%%%%%%%%%%%%%%%%%%%%%%%%%%%%%%%%%%%%%%%%%%%%%%%%%%%%%%%%%%%%%%

\newenvironment{gtfenvdimension}[1]
    {
    
    \pagebreak[3]
    Dimension #1\nopagebreak[4]
    \begin{center}
    \begin{tabular}{p{0.16\textwidth}p{0.7\textwidth}}
    %\hline
    }
    { 
    %\\\hline 
    \end{tabular} 
    \end{center}
    }
\newcommand{\gtfenvii}[3]{\hfill$#1\mapsto$&$#2#3$}
\newcommand{\gtfenviiii}[5]{\hfill$#1\mapsto$&$#2#3#4#5$}
\newcommand{\gtfenviiiiii}[7]{\hfill$#1\mapsto$&$#2#3#4#5$\\&$#6#7$}
\newcommand{\gtfenviiiiiiii}[9]{\hfill$#1\mapsto$&$#2#3#4#5$\\&$#6#7#8#9$}
\newcommand{\gtfenvlinebreak}{\\}

\begin{section}{Examples}\label{sec:examples}
We use cycle notation $(p_1q_1)\cdots(p_kq_k)$ to denote the involution with $p_j\leftrightarrow q_j$.
An underlined cycle will indicate an additional sign flip, 
e.g.{} $(\underline{pq})$ interchanges $e_p \leftrightarrow -e_q$.

The formulas we provide will apply to the oriented Grassmannians $G_k\oriented(\RR^n)$; for $G_k(\RR^n)$
one just ignores the exponents.

\begin{subsection}{Cell structure of $\RR P^\infty$}
The cells of $\RR P^{n-1}$ are 
given by the $(pq)$, $(\underline{pq})$
for $1\le p<q\le n$ and the $(\underline{p})$ for $1\le p\le n$.
The differential is given by
\begin{align}\label{rpndiff}
(p,q)^{\epsilon} &\mapsto 
(p+1,q)^{\epsilon\hphantom{-}}
+ (\underline{p+1,q})^{-\epsilon}
- (p,q-1)^{\epsilon}
- (\underline{p,q-1})^{\epsilon}
\\
(\underline{p,q})^{\epsilon} &\mapsto 
(p+1,q)^{-\epsilon}
+ (\underline{p+1,q})^{\epsilon\hphantom{-}}
+ (p,q-1)^{\epsilon}
+ (\underline{p,q-1})^{\epsilon}
\end{align}
For a $1$-cell $(p,p+1)$ this simplifies to
$d\left( (p,p+1)^\epsilon \right) = (\underline{p+1})^{-\epsilon} - (\underline{p})^{\epsilon}$
and
$d\left( (\underline{p,p+1})^\epsilon \right) = -(\underline{p+1})^{-\epsilon} + (\underline{p})^{\epsilon}$.

One can introduce an ad-hoc coproduct $\Delta$ via
\begin{align}
\Delta (pq)^\epsilon &= \sum_{p<r<q} (pr)^\epsilon\otimes (rq)^\epsilon + (\underline{pr})^\epsilon \otimes (\underline{rq})^\epsilon,
\\
\Delta (\underline{pq})^\epsilon &= \sum_{p<r<q} (\underline{pr})^\epsilon\otimes (rq)^\epsilon 
+ (pr)^\epsilon \otimes (\underline{rq})^\epsilon.
\end{align}
The differential is then $\Delta$-comultiplicative.

The following tables describe the differential explicitly for $\RR P^4$.

\begin{gtfenvdimension}{4}
\gtfenviiii%
{(15)^+}%
{-(14)^+}%
{-(\underline{14})^+}%
{+(25)^+}%
{+(\underline{25})^-}%
\gtfenvlinebreak
\gtfenviiii%
{(\underline{15})^+}%
{+(14)^+}%
{+(\underline{14})^+}%
{+(25)^-}%
{+(\underline{25})^+}%
\end{gtfenvdimension}

\begin{gtfenvdimension}{3}
\gtfenviiii%
{(14)^+}%
{-(13)^+}%
{-(\underline{13})^+}%
{-(24)^+}%
{+(\underline{24})^-}%
\gtfenvlinebreak
\gtfenviiii%
{(\underline{14})^+}%
{+(13)^+}%
{+(\underline{13})^+}%
{+(24)^-}%
{-(\underline{24})^+}%
\gtfenvlinebreak
\gtfenviiii%
{(25)^+}%
{-(24)^+}%
{-(\underline{24})^+}%
{-(35)^+}%
{+(\underline{35})^-}%
\gtfenvlinebreak
\gtfenviiii%
{(\underline{25})^+}%
{+(24)^+}%
{+(\underline{24})^+}%
{+(35)^-}%
{-(\underline{35})^+}%
\end{gtfenvdimension}

\begin{gtfenvdimension}{2}
\gtfenviiii%
{(13)^+}%
{-(12)^+}%
{-(\underline{12})^+}%
{+(23)^+}%
{+(\underline{23})^-}%
\gtfenvlinebreak
\gtfenviiii%
{(\underline{13})^+}%
{+(12)^+}%
{+(\underline{12})^+}%
{+(23)^-}%
{+(\underline{23})^+}%
\gtfenvlinebreak
\gtfenviiii%
{(24)^+}%
{-(23)^+}%
{-(\underline{23})^+}%
{+(34)^+}%
{+(\underline{34})^-}%
\gtfenvlinebreak
\gtfenviiii%
{(\underline{24})^+}%
{+(23)^+}%
{+(\underline{23})^+}%
{+(34)^-}%
{+(\underline{34})^+}%
\gtfenvlinebreak
\gtfenviiii%
{(35)^+}%
{-(34)^+}%
{-(\underline{34})^+}%
{+(45)^+}%
{+(\underline{45})^-}%
\gtfenvlinebreak
\gtfenviiii%
{(\underline{35})^+}%
{+(34)^+}%
{+(\underline{34})^+}%
{+(45)^-}%
{+(\underline{45})^+}%
\end{gtfenvdimension}

\begin{gtfenvdimension}{1}
\gtfenvii%
{(12)^+}%
{+(\underline{2})^-}%
{-(\underline{1})^+}%
\gtfenvlinebreak
\gtfenvii%
{(\underline{12})^+}%
{-(\underline{2})^+}%
{+(\underline{1})^+}%
\gtfenvlinebreak
\gtfenvii%
{(23)^+}%
{+(\underline{3})^-}%
{-(\underline{2})^+}%
\gtfenvlinebreak
\gtfenvii%
{(\underline{23})^+}%
{-(\underline{3})^+}%
{+(\underline{2})^+}%
\gtfenvlinebreak
\gtfenvii%
{(34)^+}%
{+(\underline{4})^-}%
{-(\underline{3})^+}%
\gtfenvlinebreak
\gtfenvii%
{(\underline{34})^+}%
{-(\underline{4})^+}%
{+(\underline{3})^+}%
\gtfenvlinebreak
\gtfenvii%
{(45)^+}%
{+(\underline{5})^-}%
{-(\underline{4})^+}%
\gtfenvlinebreak
\gtfenvii%
{(\underline{45})^+}%
{-(\underline{5})^+}%
{+(\underline{4})^+}%
\end{gtfenvdimension}

\end{subsection}
\begin{subsection}{Cell structure of $G_2(\RR^4)$}
\quad \\

\begin{gtfenvdimension}{4}
\gtfenviiii%
{(14)(23)^+}%
{+(14)(\underline{3})^-}%
{-(14)(\underline{2})^+}%
{+(13)(\underline{24})^+}%
{+(\underline{13})(24)^+}%
\gtfenvlinebreak
\gtfenviiii%
{(14)(\underline{23})^+}%
{-(14)(\underline{3})^+}%
{+(14)(\underline{2})^+}%
{-(13)(24)^+}%
{-(\underline{13})(\underline{24})^+}%
\gtfenvlinebreak
\gtfenviiii%
{(\underline{14})(23)^+}%
{+(\underline{14})(\underline{3})^-}%
{-(\underline{14})(\underline{2})^+}%
{-(13)(24)^+}%
{-(\underline{13})(\underline{24})^+}%
\gtfenvlinebreak
\gtfenviiii%
{(\underline{14})(\underline{23})^+}%
{-(\underline{14})(\underline{3})^+}%
{+(\underline{14})(\underline{2})^+}%
{+(13)(\underline{24})^+}%
{+(\underline{13})(24)^+}%
\end{gtfenvdimension}

\begin{gtfenvdimension}{3}
\gtfenviiiiiiii%
{(14)(\underline{3})^+}%
{+(13)(\underline{4})^-}%
{+(\underline{13})(\underline{4})^+}%
{+(12)(34)^+}%
{+(12)(\underline{34})^+}%
{+(\underline{12})(34)^+}%
{+(\underline{12})(\underline{34})^+}%
{-(24)(\underline{3})^+}%
{+(\underline{24})(\underline{3})^-}%
\gtfenvlinebreak
\gtfenviiiiiiii%
{(14)(\underline{2})^+}%
{-(13)(\underline{2})^+}%
{-(\underline{13})(\underline{2})^+}%
{+(12)(34)^+}%
{+(12)(\underline{34})^-}%
{+(\underline{12})(34)^-}%
{+(\underline{12})(\underline{34})^+}%
{-(\underline{1})(24)^+}%
{+(\underline{1})(\underline{24})^+}%
\gtfenvlinebreak
\gtfenviiiiiiii%
{(\underline{14})(\underline{3})^+}%
{-(13)(\underline{4})^+}%
{-(\underline{13})(\underline{4})^-}%
{+(12)(34)^+}%
{+(12)(\underline{34})^+}%
{+(\underline{12})(34)^+}%
{+(\underline{12})(\underline{34})^+}%
{+(24)(\underline{3})^-}%
{-(\underline{24})(\underline{3})^+}%
\gtfenvlinebreak
\gtfenviiiiiiii%
{(\underline{14})(\underline{2})^+}%
{+(13)(\underline{2})^+}%
{+(\underline{13})(\underline{2})^+}%
{+(12)(34)^-}%
{+(12)(\underline{34})^+}%
{+(\underline{12})(34)^+}%
{+(\underline{12})(\underline{34})^-}%
{+(\underline{1})(24)^+}%
{-(\underline{1})(\underline{24})^+}%
\gtfenvlinebreak
\gtfenviiiiii%
{(13)(24)^+}%
{-(13)(\underline{4})^-}%
{-(13)(\underline{2})^+}%
{+(12)(\underline{34})^-}%
{-(\underline{12})(34)^+}%
{+(24)(\underline{3})^+}%
{-(\underline{1})(24)^+}%
\gtfenvlinebreak
\gtfenviiiiii%
{(13)(\underline{24})^+}%
{-(13)(\underline{4})^+}%
{-(13)(\underline{2})^+}%
{-(12)(34)^-}%
{+(\underline{12})(\underline{34})^+}%
{-(\underline{24})(\underline{3})^+}%
{+(\underline{1})(\underline{24})^+}%
\gtfenvlinebreak
\gtfenviiiiii%
{(\underline{13})(24)^+}%
{-(\underline{13})(\underline{4})^-}%
{-(\underline{13})(\underline{2})^+}%
{+(12)(34)^+}%
{-(\underline{12})(\underline{34})^-}%
{+(24)(\underline{3})^-}%
{-(\underline{1})(24)^+}%
\gtfenvlinebreak
\gtfenviiiiii%
{(\underline{13})(\underline{24})^+}%
{-(\underline{13})(\underline{4})^+}%
{-(\underline{13})(\underline{2})^+}%
{-(12)(\underline{34})^+}%
{+(\underline{12})(34)^-}%
{-(\underline{24})(\underline{3})^-}%
{+(\underline{1})(\underline{24})^+}%
\end{gtfenvdimension}

\begin{gtfenvdimension}{2}
\gtfenviiii%
{(13)(\underline{4})^+}%
{-(12)(\underline{4})^+}%
{-(\underline{12})(\underline{4})^+}%
{+(23)(\underline{4})^+}%
{+(\underline{23})(\underline{4})^-}%
\gtfenvlinebreak
\gtfenviiii%
{(13)(\underline{2})^+}%
{+(12)(\underline{3})^-}%
{+(\underline{12})(\underline{3})^+}%
{+(\underline{1})(23)^+}%
{+(\underline{1})(\underline{23})^+}%
\gtfenvlinebreak
\gtfenviiii%
{(\underline{13})(\underline{4})^+}%
{+(12)(\underline{4})^+}%
{+(\underline{12})(\underline{4})^+}%
{+(23)(\underline{4})^-}%
{+(\underline{23})(\underline{4})^+}%
\gtfenvlinebreak
\gtfenviiii%
{(\underline{13})(\underline{2})^+}%
{-(12)(\underline{3})^+}%
{-(\underline{12})(\underline{3})^-}%
{+(\underline{1})(23)^+}%
{+(\underline{1})(\underline{23})^+}%
\gtfenvlinebreak
\gtfenviiii%
{(12)(34)^+}%
{+(12)(\underline{4})^-}%
{-(12)(\underline{3})^+}%
{-(\underline{2})(34)^-}%
{+(\underline{1})(34)^+}%
\gtfenvlinebreak
\gtfenviiii%
{(12)(\underline{34})^+}%
{-(12)(\underline{4})^+}%
{+(12)(\underline{3})^+}%
{-(\underline{2})(\underline{34})^-}%
{+(\underline{1})(\underline{34})^+}%
\gtfenvlinebreak
\gtfenviiii%
{(\underline{12})(34)^+}%
{+(\underline{12})(\underline{4})^-}%
{-(\underline{12})(\underline{3})^+}%
{+(\underline{2})(34)^+}%
{-(\underline{1})(34)^+}%
\gtfenvlinebreak
\gtfenviiii%
{(\underline{12})(\underline{34})^+}%
{-(\underline{12})(\underline{4})^+}%
{+(\underline{12})(\underline{3})^+}%
{+(\underline{2})(\underline{34})^+}%
{-(\underline{1})(\underline{34})^+}%
\gtfenvlinebreak
\gtfenviiii%
{(24)(\underline{3})^+}%
{+(23)(\underline{4})^-}%
{+(\underline{23})(\underline{4})^+}%
{+(\underline{2})(34)^+}%
{+(\underline{2})(\underline{34})^+}%
\gtfenvlinebreak
\gtfenviiii%
{(\underline{24})(\underline{3})^+}%
{-(23)(\underline{4})^+}%
{-(\underline{23})(\underline{4})^-}%
{+(\underline{2})(34)^+}%
{+(\underline{2})(\underline{34})^+}%
\gtfenvlinebreak
\gtfenviiii%
{(\underline{1})(24)^+}%
{-(\underline{1})(23)^+}%
{-(\underline{1})(\underline{23})^+}%
{+(\underline{1})(34)^+}%
{+(\underline{1})(\underline{34})^-}%
\gtfenvlinebreak
\gtfenviiii%
{(\underline{1})(\underline{24})^+}%
{+(\underline{1})(23)^+}%
{+(\underline{1})(\underline{23})^+}%
{+(\underline{1})(34)^-}%
{+(\underline{1})(\underline{34})^+}%
\end{gtfenvdimension}

\begin{gtfenvdimension}{1}
\gtfenvii%
{(12)(\underline{4})^+}%
{+(\underline{2})(\underline{4})^-}%
{-(\underline{1})(\underline{4})^+}%
\gtfenvlinebreak
\gtfenvii%
{(12)(\underline{3})^+}%
{+(\underline{2})(\underline{3})^-}%
{-(\underline{1})(\underline{3})^+}%
\gtfenvlinebreak
\gtfenvii%
{(\underline{12})(\underline{4})^+}%
{-(\underline{2})(\underline{4})^+}%
{+(\underline{1})(\underline{4})^+}%
\gtfenvlinebreak
\gtfenvii%
{(\underline{12})(\underline{3})^+}%
{-(\underline{2})(\underline{3})^+}%
{+(\underline{1})(\underline{3})^+}%
\gtfenvlinebreak
\gtfenvii%
{(23)(\underline{4})^+}%
{+(\underline{3})(\underline{4})^-}%
{-(\underline{2})(\underline{4})^+}%
\gtfenvlinebreak
\gtfenvii%
{(\underline{23})(\underline{4})^+}%
{-(\underline{3})(\underline{4})^+}%
{+(\underline{2})(\underline{4})^+}%
\gtfenvlinebreak
\gtfenvii%
{(\underline{1})(23)^+}%
{+(\underline{1})(\underline{3})^-}%
{-(\underline{1})(\underline{2})^+}%
\gtfenvlinebreak
\gtfenvii%
{(\underline{1})(\underline{23})^+}%
{-(\underline{1})(\underline{3})^+}%
{+(\underline{1})(\underline{2})^+}%
\gtfenvlinebreak
\gtfenvii%
{(\underline{2})(34)^+}%
{+(\underline{2})(\underline{4})^-}%
{-(\underline{2})(\underline{3})^+}%
\gtfenvlinebreak
\gtfenvii%
{(\underline{2})(\underline{34})^+}%
{-(\underline{2})(\underline{4})^+}%
{+(\underline{2})(\underline{3})^+}%
\gtfenvlinebreak
\gtfenvii%
{(\underline{1})(34)^+}%
{+(\underline{1})(\underline{4})^-}%
{-(\underline{1})(\underline{3})^+}%
\gtfenvlinebreak
\gtfenvii%
{(\underline{1})(\underline{34})^+}%
{-(\underline{1})(\underline{4})^+}%
{+(\underline{1})(\underline{3})^+}%
\end{gtfenvdimension}

\end{subsection}
\end{section}
%%%%%%%%%%%%%%%%%%%%%%%%%%%%%%%%%%%%%%%%%%%%%%%%%%%%%%%%%%%%%%%%%%%%%%%%%%%%%%%%%%%%%%%%%%%%%%%%%
%%%%%%%%%%%%%%%%%%%%%%%%%%%%%%%%%%%%%%%%%%%%%%%%%%%%%%%%%%%%%%%%%%%%%%%%%%%%%%%%%%%%%%%%%%%%%%%%%
%%%%%%%%%%%%%%%%%%%%%%%%%%%%%%%%%%%%%%%%%%%%%%%%%%%%%%%%%%%%%%%%%%%%%%%%%%%%%%%%%%%%%%%%%%%%%%%%%

%%%%%%%%%%%%%%%%%%%%%%%%%%%%%%%%%%%%%%%%%%%%%%%%%%%%%%%%%%%%%%%%%%%%%%%%%%%%%%%%%%%%%%%%%%%%%%%%%
%%%%%%%%%%%%%%%%%%%%%%%%%%%%%%%%%%%%%%%%%%%%%%%%%%%%%%%%%%%%%%%%%%%%%%%%%%%%%%%%%%%%%%%%%%%%%%%%%
%%%%%%%%%%%%%%%%%%%%%%%%%%%%%%%%%%%%%%%%%%%%%%%%%%%%%%%%%%%%%%%%%%%%%%%%%%%%%%%%%%%%%%%%%%%%%%%%%

\bibliography{grassmann}

\end{document}